\documentclass[11pt,twoside]{article}
\usepackage{amssymb}
\usepackage{amsmath}
\usepackage{amsthm}
\usepackage{color}
\usepackage{indentfirst}
\allowdisplaybreaks
\textwidth =158mm
\textheight =225mm
\oddsidemargin 2mm
\evensidemargin 2mm
\headheight=13pt
\setlength{\topmargin}{-0.6cm}
\parindent=13pt
\def\rr{{\mathbb R}}
\def\rn{{{\rr}^n}}
\def\ep{\varepsilon}
\def\phi{\varphi}

\def\bmoz{{{\rm BMO}(\rn)}}
\def\bloz{{{\rm BLO}(\rn)}}
\def\esup{\mathop\mathrm{\,esssup\,}}
\def\einf{{\mathop{\mathrm{\,essinf\,}}}}
\def\r{\right}
\def\lf{\left}

\newcommand{\abs}[1]{\left\vert#1\right\vert}

\newtheorem{thm}{Theorem}[section]
\newtheorem{lem}[thm]{Lemma}
\newtheorem{prop}[thm]{Proposition}
\newtheorem{cor}[thm]{Corollary}
\newtheorem{defn}[thm]{Definition}
\newtheorem{remark}[thm]{Remark}
\numberwithin{equation}{section}
\begin{document}
\arraycolsep=1pt
\title{\Large\bf  Boundedness of variation, oscillation and maximal differential transform on BMO space}
\author{Wenting Hu, Kai Wu, Dongyong Yang and Chao Zhang\thanks{Corresponding author} }
\pagestyle{myheadings}\markboth{Wenting Hu, Kai Wu, Dongyong Yang and Chao Zhang}
{ Boundedness of operators associated with approximate identities on $BMO$ space}
\date{}
\maketitle
\begin{center}
\begin{minipage}{13.5cm}\small
{\noindent  {\bf Abstract:}\ In this paper, we prove that the oscillation operator, variation operator and maximal differential transform associated with the approximate identities are bounded from $\bmoz$ to its subspace $\bloz$.}
\end{minipage}
\end{center}

\bigskip

{ {\it Keywords}: $\rm{BLO}$ spaces; $\rm{BMO}$ spaces; oscillation operator; variation operator; differential transform.}
\bigskip

{ {\it {2020 Mathematics Subject Classification}}: 42B20, 42B25.}

\section{Introduction and main results}\label{intro}

Variational inequalities originated from the aim of improving the well-known Doob maximal
inequality. Relied on the work of L\'epingle \cite{L} for martingales, Bourgain \cite{B} obtained the variational estimates of Birkhoff ergodic averages and pointwise convergence results. These results induce a new research subject in harmonic analysis and
 ergodic theory, and since then, the study of boundedness of oscillation and variation operators  has been paid more and more attentions. Especially, the classical work of $\rho$-variation operators for singular integrals was given in \cite{CJRW}, in which the authors obtained the $L^{p}$ boundedness and weak type (1,1) boundedness for $\rho$-variation operators of truncated Hilbert transform for $\rho>2$, and then extended to higher dimensional cases in \cite{CJRWH}. For further studies, we refer readers to \cite{AJS, CMMTV, CMTT, Demir1, Demir2, JKRW, JOR, JR, JW, JSW, JQ} and the references therein. Especially, for the variation operators associated  with heat and Poisson semigroups, Jones et al. \cite{JR} and Crescimbeni et al. \cite{CMMTV} independently established the $L^{p}$-bounds and weak type (1,1) bounds by different approaches. Betancor et al. \cite{BFHR} obtained the boundedness of $\rho$-variation operators for classical Riesz transform on $\bmoz$.

 Recently, Liu \cite{LHH} generalized the results in \cite{CMMTV, JR} to the variation operators associated with approximate identities and obtained a variational characterization of Hardy spaces. Based on these results, one of our main purposes is to get the boundedness of variation operators associated with approximate identities on $\bmoz$, which extends the results in \cite{BFHR} for variation operators associated with the heat semigroup. In this paper, we will focus on the oscillation operators associated with approximate identities at first.  Due to the close connection between variation operators and oscillation operators, relevant results related to variation operators will also be established. We believe that these results in the classical setting are of independent interest. It should be noted that, although variation operators and oscillation operators are both $L^p$-bounded, they act differently on BMO functions. Before stating our main results, we first recall some relevant definitions and notations.
 Let $\{T_{t}\}_{t>0}$ be a family of operators. For $\rho>2$, we define the $\rho$-variation for $\{T_{t}\}_{t>0}$ by
 \begin{equation*}
 	 \mathcal{V}_{\rho}(\{T_{t}\}_{t>0},f)(x):=\sup\limits_{\ep_{i}\searrow 0}\lf ( \sum\limits_{i=1}^{\infty}|{T_{\ep_{i}}f(x)-T_{\ep_{i+1}}f(x)}|^{\rho}\r)
 	^{\frac{1}{\rho}},
 \end{equation*}
 where the supremum is taken over all the sequences of positive number $\{\ep_{i}\}_{i=1}^{\infty}$ decreasing to zero.  It is worth noting that, in order to obtain
 the boundedness of variations, the assumption $\rho>2$ is necessary; see, for example, \cite[Remark 1.7]{CJRW} or \cite{AJS,JQ}. Let $\mathbb{N}:=\{1,2,\cdots\}$ and $\{t_{i}\}_{i\in\mathbb{N}}\subset (0,\infty)$ be a fixed sequence decreasing to zero. Then the oscillation operators of the family $\{T_{t}\}_{t>0}$ is defined by
\begin{equation*}
	\mathcal{O}_{\{t_{i}\}_{i\in\mathbb{N}}}\left(\{T_{t}\}_{t>0}, f\right)(x):=
	\left ( \sum\limits_{i=1}^{\infty}\sup\limits_{t_{i+1}\leq \varepsilon_{i+1}<\varepsilon_{i}\leq t_{i} }|{T_{\ep_{i}}f(x)-T_{\ep_{i+1}}f(x)}|^{2}
	 \right )^{\frac{1}{2}}.
\end{equation*}

We also recall the definitions of the $\rm{BMO}$ and $\rm{BLO}$ function spaces in the following.
\begin{defn}
A function $f\in L^1_{\rm loc}(\rn)$ belongs to
the {\it space} $\bmoz,$ if
\begin{equation*}
\|f\|_{\bmoz}:=\sup_{B}\frac1{|B|}\int_{B}|f(x)-f_{B}|dx<\infty,
\end{equation*}
where  the supremum is taken over all balls $B$ in $\rn$, $\left| B\right| $ denotes the Lebesgue measure of $B$ and
\begin{equation*}
f_{B}:=\frac 1{|B|}\int_{B}f(x)dx.
\end{equation*}
\end{defn}
\begin{defn}\label{BLO def}
	A function $f\in L^1_{\rm loc}(\rn)$ belongs to
	the {\it space} $\bloz$ if
	\begin{equation*}
		\|f\|_{\bloz}:=\sup_{B}\frac1{|B|}\int_{B}\lf [f(x)-\mathop{\einf}\limits_{y\in B}f(y)\r ] dx <\infty,
	\end{equation*}
	where  the supremum is taken over all balls $B$ in $\rn$.
\end{defn}
\par Let $\phi\in \mathcal{S}(\rn)$ with $\int_{\rn}\phi(x)dx=1$, where $\mathcal{S}(\rn)$ is the space of Schwartz functions (see \eqref{S space} below). We consider the approximate identities $\{\phi_{t}\}_{t>0}$, where $\phi_{t}(x):=t^{-n}\phi(x/t)$. Let $\{\Phi_{t}\}_{t>0}$ be the family of convolution operators generated by $\{\phi_{t}\}_{t>0}$, that is,
\begin{equation}\label{convo}
	\Phi_{t}f(x):=\phi_{t}\ast f(x).
\end{equation}
Now we formulate our first main result of this paper as follows:
\begin{thm}\label{main O}
Let $\phi\in \mathcal{S}(\rn)$ satisfying $\int_{\rn}\phi(x)dx=1$, $\{t_{i}\}_{i\in\mathbb{N}}\subset (0,\infty)$ be a fixed sequence decreasing to zero and $\{\Phi_{t}\}_{t>0}$ be as in \eqref{convo}. Then for any $f\in \bmoz$, we have that $\mathcal{O}_{\{t_{i}\}_{i\in\mathbb{N}}}\left(\{\Phi_{t}\}_{t>0}, f \right)(x)<\infty,a.e.\ x\in\rn,$ and there exists $C>0$  depending on $n$ and $\phi$ such that
$$\|\mathcal{O}_{\{t_{i}\}_{i\in\mathbb{N}}}\left(\{\Phi_{t}\}_{t>0}, f \right)\|_{\bloz}\leq C\|f\|_{\bmoz}.$$
\end{thm}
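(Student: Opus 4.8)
The plan is to reduce the $\bloz$ estimate to two ingredients: a pointwise bound showing $\mathcal{O}_{\{t_i\}}(\{\Phi_t\}_{t>0},f)(x)$ is finite a.e. and controlled by the $\bmoz$ norm of $f$ plus a controllable local term, and then a standard argument converting this into a $\bloz$ estimate. First I would fix a ball $B = B(x_0,r)$ and split $f = (f-f_{B^*}) \mathbf{1}_{B^*} + (f-f_{B^*})\mathbf{1}_{(B^*)^c} + f_{B^*} =: f_1 + f_2 + f_3$, where $B^* = B(x_0, \kappa r)$ for a suitable dilate. Since $\Phi_t$ annihilates constants ($\int \phi = 1$ gives $\Phi_t(c) = c$, so $\Phi_{\ep_i}f_3 - \Phi_{\ep_{i+1}}f_3 = 0$), the constant part drops out of every oscillation difference, so $\mathcal{O}_{\{t_i\}}(\{\Phi_t\}_{t>0}, f) \le \mathcal{O}_{\{t_i\}}(\{\Phi_t\}_{t>0}, f_1) + \mathcal{O}_{\{t_i\}}(\{\Phi_t\}_{t>0}, f_2)$ by subadditivity of the $\ell^2$-type norm defining $\mathcal{O}$.

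For the local part $f_1$, I would invoke the $L^2$-boundedness of the oscillation operator associated to approximate identities — this should be available from Liu's work \cite{LHH} in the variation setting, and the oscillation operator is dominated by (or handled by the same machinery as) the $\rho$-variation operator; this gives $\|\mathcal{O}_{\{t_i\}}(\{\Phi_t\}_{t>0}, f_1)\|_{L^2(\rn)} \lesssim \|f_1\|_{L^2} \lesssim |B|^{1/2} \|f\|_{\bmoz}$ by John–Nirenberg. Hence the average of $\mathcal{O}_{\{t_i\}}(\{\Phi_t\}_{t>0}, f_1)$ over $B$ is $\lesssim \|f\|_{\bmoz}$, which is already of $\bloz$-type since it bounds the full average, not just the oscillation above the essential infimum. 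For the far part $f_2$, the key is a \emph{smoothness} estimate: for $y, z \in B$ and $t>0$, one writes $\Phi_t f_2(y) - \Phi_t f_2(z) = \int (\phi_t(y-u) - \phi_t(z-u)) f_2(u)\,du$; when $t \le t_0$ (comparable to $r$) the rapid decay and the derivative bound $|\nabla \phi_t(w)| \lesssim t^{-n-1}(1 + |w|/t)^{-N}$ yield, after summing the dyadic annuli $|u - x_0| \sim 2^k r$, a bound of the form $\Phi_t f_2(y) - \Phi_t f_2(z) \lesssim (|y-z|/t) \cdot (\text{something}) + \ldots$; the telescoping/oscillation sum in $i$ then has to be controlled. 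Here the standard trick is that because the sequence $\{t_i\}$ is fixed and decreasing, the supremum over $t_{i+1} \le \ep_{i+1} < \ep_i \le t_i$ of $|\Phi_{\ep_i} f_2 - \Phi_{\ep_{i+1}} f_2|$ is bounded by an integral of $|t \partial_t \Phi_t f_2|$ over $(t_{i+1}, t_i)$, and the $\ell^2$-norm over $i$ telescopes against $\int_0^\infty |t \partial_t \Phi_t f_2|^2 \frac{dt}{t}$-type quantities; for the far part this integral is controlled by $\|f\|_{\bmoz}$ via the classical BMO estimate $|\Phi_t f_2(x_0)| \lesssim \|f\|_{\bmoz}$ and its $t$-derivative analogue.

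To upgrade from "average over $B$ is $\lesssim \|f\|_{\bmoz}$" to the genuine $\bloz$ estimate, I would show that the portion coming from $f_2$ (the far/smooth part) is, on $B$, within an additive constant $c_B$ of being pointwise-controlled — more precisely, that the oscillation of $\mathcal{O}_{\{t_i\}}(\{\Phi_t\}_{t>0}, f)$ itself over $B$ (as opposed to its size) is governed entirely by the $f_2$-contribution plus the $L^2$-bounded $f_1$-contribution; using $\big|\,\|a\| - \|b\|\,\big| \le \|a - b\|$ for the $\ell^2$ norms, one gets $\big| \mathcal{O}(\{\Phi_t\}, f)(y) - \mathcal{O}(\{\Phi_t\}, f)(z)\big| \le \mathcal{O}(\{\Phi_t\}, f_1)(y) + \mathcal{O}(\{\Phi_t\}, f_1)(z) + \mathcal{O}(\{\Phi_t\}_{t>0}, f_2)(y) - \mathcal{O}(\cdots)$ — the cleanest route is: show $\mathcal{O}(\{\Phi_t\}, f_2)(y)$ differs from a constant (its value at $x_0$, say) by $\lesssim \|f\|_{\bmoz}$ uniformly for $y \in B$, so $\mathcal{O}(\{\Phi_t\}, f)(y) \ge \mathcal{O}(\{\Phi_t\}, f_2)(x_0) - C\|f\|_{\bmoz} - \mathcal{O}(\{\Phi_t\}, f_1)(y)$, giving a lower bound for $\einf_B$ up to the $L^1$-small term $\frac{1}{|B|}\int_B \mathcal{O}(\{\Phi_t\},f_1)$. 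I expect the \textbf{main obstacle} to be the $f_2$ (far-part) oscillation estimate: one must control the full $\ell^2$ oscillation sum over the fixed sequence $\{t_i\}$ — which ranges over \emph{all} scales, not just $t \gtrsim r$ — so for the small scales $t_i \le r$ one cannot use the trivial "$\Phi_t f_2$ is nearly constant on $B$" bound and must instead exploit cancellation in the differences $\Phi_{\ep_i} f_2 - \Phi_{\ep_{i+1}} f_2$ together with the fact that $f_2$ is supported far from $B$, carefully summing the resulting geometric-decay-in-$k$ series so that it converges and produces the clean $\|f\|_{\bmoz}$ bound rather than a logarithmically divergent one.
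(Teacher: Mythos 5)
Your overall architecture matches the paper's: the splitting $f=f_1+f_2+f_3$ with the constant part annihilated, the $L^2$ bound for the local part (which is available for the oscillation operator directly from \cite{LHH}, quoted as Lemma \ref{O and V Lp} --- note that $\mathcal{O}$ is \emph{not} pointwise dominated by $\mathcal{V}_{\rho}$ for $\rho>2$, since $\mathcal{V}_{\rho}\le \mathcal{V}_{2}$, so you need the oscillation statement itself rather than a comparison with the variation), a size estimate for the far part at small scales, and smoothness estimates at large scales with the large-scale contribution absorbed by the essential infimum. However, you have mislocated the difficulty, and there is a genuine gap. The small scales of the far part are \emph{not} the obstacle: since $|\partial_t\phi_t(x-y)|\le C|y-x_0|^{-n-1}$ uniformly in $t$ for $x\in B$ and $y\notin 2B$ (estimate \eqref{two outer partial}), one bounds the $\ell^2$ sum by the $\ell^1$ sum, applies Newton--Leibniz to get $\int_0^{r}|\partial_t\Phi_t f_2(x)|\,dt\le Cr\int_{(2B)^c}|f(y)-f_B|\,|y-x_0|^{-n-1}dy$, and the dyadic sum $\sum_k (k+1)2^{-k}\|f\|_{\bmoz}$ converges; no cancellation between consecutive differences is needed.

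The real gap is the term of the oscillation sum whose parameter window straddles $r$: if $t_{i_0}<r\le t_{i_0-1}$, the term indexed by $i_0-1$ involves a supremum over $\ep_{i_0},\ep_{i_0-1}\in[t_{i_0},t_{i_0-1}]$, an interval containing scales on both sides of $r$, and neither the size estimate (which over the whole window would produce a factor $t_{i_0-1}/r$, possibly huge) nor the smoothness estimate (which fails for $\ep<r$) applies to it as a whole. Moreover, your plan to show that ``$\mathcal{O}_{\{t_i\}}(\{\Phi_t\}_{t>0},f_2)(y)$ differs from its value at $x_0$ by $\lesssim\|f\|_{\bmoz}$'' must contend with the fact that the suprema defining each term may be attained at different parameters for different points of $B$, so the upper bound at $x$ must be paired with the lower bound coming from $\einf_{y\in B}$ term by term, with matching parameter ranges. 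The paper resolves both issues at once by partitioning $B$ into three sets $B_1,B_2,B_3$ according to where the supremum in the straddling term is attained and, on $B_3$, inserting the intermediate value $\Phi_r f$ so that this term splits into a below-$r$ piece (handled by the size estimate and kept inside the averaged integral) and an above-$r$ piece (handled by the smoothness estimates \eqref{three inner partial} and \eqref{three outer partial} and cancelled against the essential infimum). Without this device, or an equivalent one, your sketch does not close.
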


Since $\bloz\subset\bmoz$, as a corollary of Theorem \ref{main O}, we can conclude the boundedness of $\mathcal{O}_{\{t_{i}\}_{i\in\mathbb{N}}}\left(\{\Phi_{t}\}_{t>0}, \cdot \right)$ on $\bmoz$ in the following.

\begin{cor}\label{cor O}
 Let $\phi, \{\Phi_{t}\}_{t>0}$ and $\{t_{i}\}_{i\in\mathbb{N}}$ be as in Theorem \ref{main O}. Then the oscillation operator $\mathcal{O}_{\{t_{i}\}_{i\in\mathbb{N}}}\left(\{\Phi_{t}\}_{t>0}, \cdot \right)$ is bounded on $\bmoz$.
\end{cor}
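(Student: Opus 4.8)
The corollary is immediate from Theorem~\ref{main O} and the continuous inclusion $\bloz\hookrightarrow\bmoz$ (if $h\in\bloz$ then $\|h\|_{\bmoz}\le 2\|h\|_{\bloz}$), so I describe the plan for Theorem~\ref{main O}. First, three reductions. (i) For fixed $x$,
\[
\mathcal{O}_{\{t_{i}\}_{i\in\mathbb{N}}}(\{\Phi_{t}\}_{t>0},f)(x)^{2}=\sum_{i\ge1}\mathrm{osc}_{i}\big(\Phi_{\cdot}f(x)\big)^{2},\qquad \mathrm{osc}_{i}(h):=\sup_{[t_{i+1},t_{i}]}h-\inf_{[t_{i+1},t_{i}]}h,
\]
because in each summand both $\ep_{i}$ and $\ep_{i+1}$ run over $[t_{i+1},t_{i}]$; hence $\mathcal{O}_{\{t_{i}\}_{i\in\mathbb{N}}}(\{\Phi_{t}\}_{t>0},\cdot)(x)$ is subadditive and kills constants (since $\Phi_{t}c=c$). (ii) With $\psi(y):=-n\phi(y)-y\cdot\nabla\phi(y)$ one has $\psi\in\mathcal{S}(\rn)$, $\int_{\rn}\psi=0$, $\partial_{t}\phi_{t}=t^{-1}\psi_{t}$, so $\Phi_{t}f(x)-\Phi_{s}f(x)=\int_{s}^{t}u^{-1}\Psi_{u}f(x)\,du$ with $\Psi_{u}g:=\psi_{u}*g$, and $\Psi_{u}$ also annihilates constants. (iii) Fix $B=B(x_{0},r)$ and split $f=f_{1}+f_{2}+f_{2B}$ with $f_{1}:=(f-f_{2B})\chi_{2B}$, $f_{2}:=(f-f_{2B})\chi_{(2B)^{c}}$; then $\int_{\rn}f_{1}=0$ and, by John--Nirenberg, $|B|^{-1}\int_{2B}|f-f_{2B}|^{2}\lesssim\|f\|_{\bmoz}^{2}$. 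Write $g:=\mathcal{O}_{\{t_{i}\}_{i\in\mathbb{N}}}(\{\Phi_{t}\}_{t>0},f)$ and $g_{j}:=\mathcal{O}_{\{t_{i}\}_{i\in\mathbb{N}}}(\{\Phi_{t}\}_{t>0},f_{j})$; by (i), $g\le g_{1}+g_{2}$ a.e.

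Next I would prove three estimates. (a) Since $\mathcal{O}_{\{t_{i}\}_{i\in\mathbb{N}}}(\{\Phi_{t}\}_{t>0},\cdot)$ is bounded on $L^{2}(\rn)$ (cf.\ \cite{LHH}), $|B|^{-1}\int_{B}g_{1}\le\big(|B|^{-1}\|g_{1}\|_{L^{2}}^{2}\big)^{1/2}\lesssim\big(|B|^{-1}\|f_{1}\|_{L^{2}}^{2}\big)^{1/2}\lesssim\|f\|_{\bmoz}$. (b) For $x\in B$ and $y\in(2B)^{c}$ one has $|x-y|\gtrsim|x_{0}-y|$, so the Schwartz decay of $\psi$ and the growth bound $\int_{(2B)^{c}}|x_{0}-y|^{-N}|f(y)-f_{2B}|\,dy\lesssim\|f\|_{\bmoz}r^{n-N}$ give $\int_{0}^{r}u^{-1}|\Psi_{u}f_{2}(x)|\,du\lesssim\|f\|_{\bmoz}$; and since $\int f_{1}=0$, a first-order Taylor expansion of $\psi_{u}(x-\cdot)$ about $x_{0}$ on $2B$ gives $|\Psi_{u}f_{1}(x)|\lesssim\|f\|_{\bmoz}\,r^{n+1}u^{-n-1}$, hence $\int_{r}^{\infty}u^{-1}|\Psi_{u}f_{1}(x)|\,du\lesssim\|f\|_{\bmoz}$, both uniformly in $x\in B$. (c) For $x,z\in B$, bounding an $\ell^{2}$-norm of consecutive differences by a total variation yields $|g_{2}(x)-g_{2}(z)|\le\int_{0}^{\infty}u^{-1}|\Psi_{u}f_{2}(x)-\Psi_{u}f_{2}(z)|\,du$, and then the mean-value bound $|\psi_{u}(x-y)-\psi_{u}(z-y)|\lesssim r\,u^{-n-1}(1+|x_{0}-y|/u)^{-N}$ together with $\int_{(2B)^{c}}|x_{0}-y|^{-n-1}|f(y)-f_{2B}|\,dy\lesssim\|f\|_{\bmoz}r^{-1}$ gives $|g_{2}(x)-g_{2}(z)|\lesssim\|f\|_{\bmoz}$, whence $\esup_{B}g_{2}-\einf_{B}g_{2}\lesssim\|f\|_{\bmoz}$.

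The crux is the pointwise comparison $g_{2}(x)\le g(x)+C\|f\|_{\bmoz}$ for a.e.\ $x\in B$ (equivalently $\einf_{B}g_{2}\le\einf_{B}g+C\|f\|_{\bmoz}$). Let $i^{\ast}$ be the largest index with $t_{i^{\ast}}>r$ and split $g_{2}(x)^{2}=\sum_{t_{i}\le r}\mathrm{osc}_{i}(\Phi_{\cdot}f_{2}(x))^{2}+\sum_{i<i^{\ast}}\mathrm{osc}_{i}(\Phi_{\cdot}f_{2}(x))^{2}+\mathrm{osc}_{i^{\ast}}(\Phi_{\cdot}f_{2}(x))^{2}$. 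The first sum is $\le\big(\int_{0}^{r}u^{-1}|\Psi_{u}f_{2}(x)|\,du\big)^{2}\lesssim\|f\|_{\bmoz}^{2}$ by (b). For $i<i^{\ast}$ one has $[t_{i+1},t_{i}]\subset(r,\infty)$, so using $\Phi_{t}f_{2}=\Phi_{t}f-\Phi_{t}f_{1}-f_{2B}$ and the $f_{1}$-part of (b) on $(r,\infty)$ gives $\big(\sum_{i<i^{\ast}}\mathrm{osc}_{i}(\Phi_{\cdot}f_{2}(x))^{2}\big)^{1/2}\le\big(\sum_{i<i^{\ast}}\mathrm{osc}_{i}(\Phi_{\cdot}f(x))^{2}\big)^{1/2}+C\|f\|_{\bmoz}$. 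For the single transition index $i^{\ast}$, split $[t_{i^{\ast}+1},t_{i^{\ast}}]$ at $r$: the part below $r$ contributes $\lesssim\|f\|_{\bmoz}$ via $\int_{0}^{r}u^{-1}|\Psi_{u}f_{2}(x)|\,du$, and on the part above $r$ the identity $\Phi_{t}f_{2}=\Phi_{t}f-\Phi_{t}f_{1}-f_{2B}$ together with the $f_{1}$-part of (b) gives $\mathrm{osc}_{i^{\ast}}(\Phi_{\cdot}f_{2}(x))\le\mathrm{osc}_{i^{\ast}}(\Phi_{\cdot}f(x))+C\|f\|_{\bmoz}$. Since $\sum_{i<i^{\ast}}\mathrm{osc}_{i}(\Phi_{\cdot}f(x))^{2}+\mathrm{osc}_{i^{\ast}}(\Phi_{\cdot}f(x))^{2}\le g(x)^{2}$, assembling the three contributions (with the elementary $\sqrt{a}+\sqrt{b}\le\sqrt{2(a+b)}$) yields the comparison. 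Then, for every ball $B$,
\[
\frac{1}{|B|}\int_{B}\big(g-\einf_{B}g\big)\le\frac{1}{|B|}\int_{B}g_{1}+\big(\esup_{B}g_{2}-\einf_{B}g_{2}\big)+\big(\einf_{B}g_{2}-\einf_{B}g\big)\lesssim\|f\|_{\bmoz}
\]
by (a), (c) and the comparison, so $\|g\|_{\bloz}\lesssim\|f\|_{\bmoz}$; and $g<\infty$ a.e.\ since $g\le g_{1}+g_{2}$ with $g_{1}\in L^{2}$ and $g_{2}(x_{0})<\infty$ (a direct estimate of $\int_{0}^{t_{1}}u^{-1}|\Psi_{u}f_{2}(x_{0})|\,du$, using $t_{1}<\infty$), combined with (c).

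The main obstacle is exactly the comparison $g_{2}(x)\le g(x)+C\|f\|_{\bmoz}$ \emph{with constant one} in front of $g(x)$: any larger constant would leave, after subtracting $\einf_{B}g$, an uncontrolled multiple of $\einf_{B}g$, and the naive triangle estimate $g_{2}\le g+g_{1}$ is far too lossy because $g_{1}$ is only in $L^{2}$, not $L^{\infty}$. Obtaining constant one is what forces the scale decomposition above; in particular the single ``transition'' interval $[t_{i^{\ast}+1},t_{i^{\ast}}]$ (straddling the radius $r$) must be handled through the identity $\Phi_{t}f_{2}=\Phi_{t}f-\Phi_{t}f_{1}-f_{2B}$ rather than by $\mathrm{osc}_{i}\Phi_{\cdot}f_{2}\le\mathrm{osc}_{i}\Phi_{\cdot}f+\mathrm{osc}_{i}\Phi_{\cdot}f_{1}$, and one must use crucially that $f_{1}$ has mean zero so that $\Psi_{u}f_{1}$ (and $\Phi_{t}f_{1}$) decay at scales $u\gtrsim r$. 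Estimates (a), (b), (c), by contrast, are routine singular-integral computations once $\psi$ and the cancellation identity in (ii) are in place.
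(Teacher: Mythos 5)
Your derivation of the corollary coincides with the paper's: it is immediate from Theorem \ref{main O} together with the inclusion $\bloz\subset\bmoz$ (indeed $\|h\|_{\bmoz}\le 2\|h\|_{\bloz}$), which is exactly the one-line argument the paper gives. The long additional sketch of Theorem \ref{main O} itself is not needed for this corollary (that theorem is already established in the paper); it is organized differently from the paper's proof --- a pointwise comparison $g_{2}(x)\le g(x)+C\|f\|_{\bmoz}$ in place of the paper's splitting of $B$ into the subsets $B_{1},B_{2},B_{3}$ according to where the supremum over the transition interval is attained --- but nothing in it affects the validity of the corollary's deduction.
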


Moreover, we obtain similar but not identical results with regard to variation operators as follows.
\begin{thm}\label{main V}
	Let $\phi\in \mathcal{S}(\rn)$ satisfying $\int_{\rn}\phi(x)dx=1$, $\{\Phi_{t}\}_{t>0}$ be be as in \eqref{convo}. Then for $f\in \bmoz$,  $ \mathcal{V}_{\rho}\left(\{\Phi_{t}\}_{t>0},f\right)$ is either  infinite everywhere or finite almost everywhere. And in the latter case, there exists $C>0$  depending on $n$ and $\phi$ such that
		$$\|\mathcal{V}_{\rho}\left( \{\Phi_{t}\}_{t>0},f\right)\|_{\bloz}\leq C\|f\|_{\bmoz}.$$
\end{thm}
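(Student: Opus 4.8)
The plan is to reduce the variation estimate to the already-established oscillation estimate (Theorem~\ref{main O}), exploiting the standard comparison between $\mathcal{V}_\rho$ and a supremum over jump sequences versus a fixed lattice of scales. First I would fix a geometric sequence of scales $t_i:=2^{-i}$, $i\in\mathbb{Z}$, and recall the elementary pointwise inequality that controls the $\rho$-variation of $\{\Phi_t f\}$ by the ``short'' variation within each dyadic annulus $[t_{i+1},t_i]$ plus the ``long'' variation across the lattice points $\{\Phi_{t_i}f\}_i$; for $\rho>2$ the long part is in turn dominated (via $\ell^\rho\hookrightarrow\ell^2$ in reverse is false, but the right direction: $\ell^2\hookrightarrow\ell^\rho$) by the oscillation operator $\mathcal{O}_{\{t_i\}}(\{\Phi_t\},f)$, which Theorem~\ref{main O} already controls in $\bloz$. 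Concretely, one writes, for any admissible decreasing sequence $\{\ep_j\}$, each jump $\Phi_{\ep_j}f-\Phi_{\ep_{j+1}}f$ as a telescoping sum over the lattice points it straddles plus boundary terms living in single annuli, applies Minkowski/triangle in $\ell^\rho$, and arrives at
\[
\mathcal{V}_\rho(\{\Phi_t\}_{t>0},f)(x)\lesssim \mathcal{O}_{\{t_i\}_{i\in\mathbb{Z}}}(\{\Phi_t\}_{t>0},f)(x)+\Big(\sum_{i\in\mathbb{Z}}\big(S_i f(x)\big)^\rho\Big)^{1/\rho},
\]
where $S_i f(x):=\sup_{t_{i+1}\le s<t\le t_i}|\Phi_tf(x)-\Phi_sf(x)|$ is the short variation in the $i$-th annulus.

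The remaining task is therefore to bound the short-variation term $\big(\sum_i (S_i f)^\rho\big)^{1/\rho}$ in $\bloz$ by $\|f\|_{\bmoz}$. Here I would use the smoothness of $\phi$: for $s,t$ in the same dyadic annulus, $\phi_t-\phi_s=\int_s^t \frac{d}{dr}\phi_r\,dr$ and $\frac{d}{dr}\phi_r(x)=r^{-1}\psi_r(x)$ for a Schwartz function $\psi=-(n\phi+x\cdot\nabla\phi)$ with $\int\psi=0$. Thus $S_i f(x)\le \int_{t_{i+1}}^{t_i}|\psi_r\ast f(x)|\,\frac{dr}{r}$, and since this is an integral over an interval of length $\asymp t_i$, one gets $S_i f(x)\lesssim \sup_{t_{i+1}\le r\le t_i}|\psi_r\ast f(x)|$, or after Cauchy--Schwarz $S_i f(x)^2\lesssim \int_{t_{i+1}}^{t_i}|\psi_r\ast f(x)|^2\,\frac{dr}{r}$. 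Summing over $i$ (using $\rho\ge 2$ so $\ell^2\hookrightarrow\ell^\rho$) bounds the short term by the continuous $g$-function $\big(\int_0^\infty|\psi_r\ast f(x)|^2\,\frac{dr}{r}\big)^{1/2}$ associated with the mean-zero kernel $\psi$. For this I would split $f=(f-f_B)\chi_{2B}+(f-f_B)\chi_{(2B)^c}+f_B$ on a ball $B$: the constant contributes nothing since $\int\psi=0$; the local piece is handled by $L^2$-boundedness of the $g$-function together with the John--Nirenberg inequality; and the tail piece is estimated directly using the decay $|\psi_r(x-y)|\lesssim r/|x-y|^{n+1}$ and the logarithmic growth $|f_{2^k B}-f_B|\lesssim k\|f\|_{\bmoz}$. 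The upgrade from ``$\bmoz$ norm bound'' to ``$\bloz$ norm bound'' comes, as in the proof of Theorem~\ref{main O}, from the observation that after subtracting the value at a suitable point one in fact controls $\frac1{|B|}\int_B[\,\cdot - \einf_B(\cdot)\,]$; concretely one fixes a reference scale (the radius of $B$) and shows the part of the operator built from scales $r\gtrsim r_B$ is essentially constant on $B$ up to an $O(\|f\|_{\bmoz})$ oscillation, while the part from scales $r\lesssim r_B$, applied to $(f-f_B)\chi_{2B}$, has small mean over $B$ by the $L^2$ estimate — exactly the BMO-to-BLO mechanism already set up for the oscillation operator.

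The main obstacle, and the place where some care is genuinely required, is the dichotomy ``infinite everywhere or finite a.e.'' in the statement: unlike the oscillation operator, $\mathcal{V}_\rho(\{\Phi_t\}_{t>0},f)$ need not be finite a.e.\ for every $f\in\bmoz$ (e.g.\ $\Phi_tf$ may fail to have a limit as $t\to\infty$ when $f$ is a nonconstant BMO function of logarithmic type), so one cannot simply assert finiteness. I would handle this by working with the \emph{local} variation $\mathcal{V}_\rho^{(0,r_B)}$ over scales in $(0,r_B)$, proving for that the full $\bloz$ estimate by the argument above, and then noting that the difference $\mathcal{V}_\rho(\{\Phi_t\}_{t>0},f)(x)-\mathcal{V}_\rho^{(0,r_B)}(\{\Phi_t\}_{t>0},f)(x)$ only involves scales $\ge r_B$, where $\Phi_t f$ varies slowly: this difference is either $+\infty$ for all $x$ simultaneously (when the ``large-scale'' tail diverges, which is an $x$-independent condition up to finite quantities) or is a bounded function. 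Combining the two cases yields the claimed alternative, with the stated norm bound holding in the finite case. A secondary technical point is verifying that $\psi=-(n\phi + x\cdot\nabla\phi)$ indeed has integral zero and lies in $\mathcal{S}(\rn)$ with the claimed pointwise decay — this is routine, following from $\int_{\rn}\phi=1$ and integration by parts — and that the passage from the discrete oscillation lattice to the continuous $g$-function does not lose the mean-zero cancellation, which it does not.
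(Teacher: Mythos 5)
Your strategy is sound and would yield the theorem, but it is genuinely different from the paper's. The paper does not pass through the oscillation operator at all: for scales below the radius $r$ of the test ball it estimates the variation directly, using the $L^2$-boundedness of $\mathcal{V}_\rho(\{\Phi_t\}_{t>0},\cdot)$ itself (quoted from Liu's work as Lemma \ref{O and V Lp}) applied to $(f-f_B)\chi_{2B}$ together with the kernel decay \eqref{two outer partial} on the tail; for scales above $r$ it dominates the difference of the variations at two points $x,y\in B$ by the $\ell^1$-sum of the differences of the corresponding jumps, which collapses to $\int_r^\infty\left|\frac{\partial}{\partial t}\left[\Phi_t(f-f_B)(x)-\Phi_t(f-f_B)(y)\right]\right|dt$ and is handled by the smoothness estimates \eqref{three inner partial} and \eqref{three outer partial}; the two regimes are glued by splitting $B$ according to whether the supremum defining $\mathcal{V}_\rho$ is attained by sequences lying entirely below $r$ or not. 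Your long/short decomposition replaces the quoted variational $L^2$ bound by Theorem \ref{main O} plus the classical $g$-function built from $\psi=-(n\phi+x\cdot\nabla\phi)$, which makes the small-scale part more self-contained (only classical Littlewood--Paley theory is needed) at the cost of an extra layer of decomposition. Two points in your write-up need tightening. First, the displayed inequality involving $\mathcal{O}_{\{t_i\}_{i\in\mathbb{Z}}}$ over a two-sided lattice is not covered by Theorem \ref{main O}, which treats only sequences indexed by $\mathbb{N}$ decreasing to zero; worse, the two-sided oscillation is itself infinite everywhere for, e.g., $f(x)=\log|x|$, so that display cannot be used as stated --- you must truncate the lattice at the scale $r_B$ from the outset and fold the large scales into your dichotomy argument (which you do in your last paragraph, but the first display should be restated accordingly). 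Second, $\sum_i(S_if)^\rho$ with $S_i$ a single-jump supremum does not by itself dominate the contribution of a sequence placing many jumps inside one annulus; the $1$-variation within each annulus is needed there, but since your bound $S_if\le\int_{t_{i+1}}^{t_i}|\psi_r\ast f|\,\frac{dr}{r}$ in fact controls that $1$-variation, the repair is immediate. With these adjustments, and with your large-scale comparison made quantitative exactly as in the paper's estimate of $E_{22}$ (so that finiteness at a single point propagates to every point, which is what yields the dichotomy), the argument closes.
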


Similar to Corollary \ref{cor O}, we have the following corollary.
 \begin{cor}\label{cor V}
 	Let $\phi, \rho$ and $\{\Phi_{t}\}_{t>0}$ be as in Theorem \ref{main V}. Then there exists $C>0$  depending on $n$ and $\phi$ such that
 	$$\|\mathcal{V}_{\rho}\left( \{\Phi_{t}\}_{t>0},f\right)\|_{\bmoz}\leq C\|f\|_{\bmoz},$$
 	for any $f\in \bmoz$ with $\mathcal{V}_{\rho}\left( \{\Phi_{t}\}_{t>0},f\right)(x_0)<\infty$ for some $x_{0}\in\rn$.
 \end{cor}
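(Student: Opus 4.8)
The plan is to derive Corollary~\ref{cor V} as an immediate consequence of Theorem~\ref{main V} together with the elementary inclusion $\bloz\subset\bmoz$. First I would use the hypothesis: since $\mathcal{V}_{\rho}(\{\Phi_{t}\}_{t>0},f)(x_{0})<\infty$ for some $x_{0}\in\rn$, the function $\mathcal{V}_{\rho}(\{\Phi_{t}\}_{t>0},f)$ cannot be infinite everywhere, so by the dichotomy in Theorem~\ref{main V} we must be in its second alternative. Hence $\mathcal{V}_{\rho}(\{\Phi_{t}\}_{t>0},f)$ is finite almost everywhere, belongs to $\bloz$, and satisfies $\|\mathcal{V}_{\rho}(\{\Phi_{t}\}_{t>0},f)\|_{\bloz}\leq C_{1}\|f\|_{\bmoz}$ for some $C_{1}=C_{1}(n,\phi)$.

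Next I would record the standard norm comparison: every $g\in\bloz$ belongs to $\bmoz$ with $\|g\|_{\bmoz}\leq 2\|g\|_{\bloz}$. Indeed, fixing a ball $B$ and letting $m_{B}$ denote the essential infimum of $g$ over $B$, one has $g-m_{B}\geq 0$ a.e.\ on $B$ and $g_{B}-m_{B}\geq 0$, so that
\begin{equation*}
\frac{1}{|B|}\int_{B}|g(x)-g_{B}|\,dx\leq \frac{1}{|B|}\int_{B}[g(x)-m_{B}]\,dx+(g_{B}-m_{B})=\frac{2}{|B|}\int_{B}[g(x)-m_{B}]\,dx\leq 2\|g\|_{\bloz};
\end{equation*}
taking the supremum over all balls $B$ gives the claim (in particular membership in $\bloz$ already forces $g\in L^{1}_{\rm loc}(\rn)$, so no separate integrability check is needed).

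Finally, applying this comparison with $g=\mathcal{V}_{\rho}(\{\Phi_{t}\}_{t>0},f)$, which lies in $\bloz$ by the first step, yields
\begin{equation*}
\|\mathcal{V}_{\rho}(\{\Phi_{t}\}_{t>0},f)\|_{\bmoz}\leq 2\|\mathcal{V}_{\rho}(\{\Phi_{t}\}_{t>0},f)\|_{\bloz}\leq 2C_{1}\|f\|_{\bmoz},
\end{equation*}
which is the desired estimate with $C=2C_{1}$ depending only on $n$ and $\phi$. I do not expect any genuine obstacle: the whole substance of the corollary is already contained in Theorem~\ref{main V}, the single-point finiteness assumption is used solely to select the favorable branch of the dichotomy there, and the inclusion $\bloz\subset\bmoz$ is routine.
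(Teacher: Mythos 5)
Your proposal is correct and matches the paper's (implicit) argument exactly: the paper derives Corollary \ref{cor V} from Theorem \ref{main V} precisely via the inclusion $\bloz\subset\bmoz$, just as Corollary \ref{cor O} is derived from Theorem \ref{main O}, with the pointwise finiteness hypothesis serving only to select the finite-a.e.\ branch of the dichotomy. Your explicit verification of the norm comparison $\|g\|_{\bmoz}\leq 2\|g\|_{\bloz}$ is a correct filling-in of a step the paper leaves unstated.
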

Let $\mathbb{Z}_{<}^{2}:=\{(N_{1},N_{2})\in \mathbb{Z}^{2}: N_{1}<N_{2}\}$. We now consider the maximal differential transform associated with the family $\{\Phi_{t}\}_{t>0}$ defined by
$$\mathcal{S}_{\boldsymbol{a,v};*}\left(\{\Phi_{t}\}_{t>0},f \right)(x):=\sup\limits_{(N_{1},N_{2})\in \mathbb{Z}_{<}^{2}  } \left| \sum\limits_{i=N_{1}}^{N_{2}}v_{i}\left(\Phi_{a_{i+1}}f(x)-\Phi_{a_{i}}f(x) \right) \right|,$$
where $\boldsymbol{a}:=\{a_{i}\}_{i\in \mathbb{Z}}$ is a $\delta$-lacunary sequence of positive numbers, that is, there exists a constant $\delta>1$ such that $a_{i+1}/ a_{i}\geq \delta$ for ecah $i\in \mathbb{Z}$, and $\boldsymbol{v}:=\{v_{i}\}_{i\in \mathbb{Z}}$ is a bounded sequence of complex numbers. We remark that, by this way to analyze convergence of sequence was considered by Jones and Rosenblatt \cite{JRD} for ergodic average, and latter by Bernardis et al. \cite{BLMTTT}   and Zhang et al. \cite{CMT,CT}  for differential transform. Next, we  will consider the boundedness of $\mathcal{S}_{\boldsymbol{a,v};*}\left(\{\Phi_{t}\}_{t>0}, \cdot \right)$ on $L^{p}(\rn)$ space with $1<p<\infty$.
\begin{thm}\label{S Lp}
	Let $\boldsymbol{a}$ be a $\delta$-lacunary sequence of positive numbers, $\boldsymbol{v}$ is a bounded sequence of complex numbers and $\phi\in \mathcal{S}(\rn)$ with $ \int_{\rn}\phi(x)dx=1$. Then for any $1<p<\infty$, there exists  a constant $C>0$  depending on $n, p, \left\| \boldsymbol{v}
	\right\|_{l^{\infty}(\mathbb{Z})},\ \delta$ and $\phi$ such that  $$\left\| \mathcal{S}_{\boldsymbol{a,v};*}\left(\{\Phi_{t}\}_{t>0}, f \right)\right\| _{L^p(\rn)}\leq C\|f\|
	_{L^p(\rn)},$$
for all functions $f\in L^{p}(\rn)$.
	\end{thm}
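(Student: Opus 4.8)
The plan is to recognize $\mathcal{S}_{\boldsymbol{a,v};*}(\{\Phi_{t}\}_{t>0},f)$ as a doubly scale-truncated Calder\'on--Zygmund maximal operator and then to run a Cotlar-type argument. First I would record the elementary identity $\frac{d}{dt}\phi_{t}(x)=\frac1t\psi_{t}(x)$, where $\psi(y):=-n\phi(y)-y\cdot\nabla\phi(y)\in\mathcal{S}(\rn)$ satisfies $\int_{\rn}\psi=0$ (it is a perfect divergence) and $\psi_{t}(x):=t^{-n}\psi(x/t)$. Integrating gives $\Phi_{a_{i+1}}f-\Phi_{a_{i}}f=\int_{a_{i}}^{a_{i+1}}(\psi_{t}\ast f)\,\frac{dt}{t}$. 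Since $\boldsymbol{a}$ is $\delta$-lacunary, the intervals $\{[a_{i},a_{i+1})\}_{i\in\mathbb{Z}}$ partition $(0,\infty)$, so setting $m(t):=v_{i}$ for $t\in[a_{i},a_{i+1})$ (a bounded function with $\|m\|_{\infty}\le\|\boldsymbol{v}\|_{l^{\infty}(\mathbb{Z})}$) one obtains
\[
\sum_{i=N_{1}}^{N_{2}}v_{i}\big(\Phi_{a_{i+1}}f-\Phi_{a_{i}}f\big)=K^{a_{N_{1}},a_{N_{2}+1}}\ast f,\qquad K^{\alpha,\beta}:=\int_{\alpha}^{\beta}m(t)\,\psi_{t}\,\frac{dt}{t}.
\]
Writing $K^{\alpha,\beta}=K^{\alpha,\infty}-K^{\beta,\infty}$, I would reduce matters to the one-sided maximal truncation: $\mathcal{S}_{\boldsymbol{a,v};*}(\{\Phi_{t}\}_{t>0},f)\le 2\sup_{\alpha>0}\big|T^{\alpha,\infty}f\big|$, where $T^{\alpha,\infty}f:=K^{\alpha,\infty}\ast f$.

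Next I would establish the Calder\'on--Zygmund structure of the ``untruncated'' operator $T$ with kernel $K:=\mathrm{p.v.}\int_{0}^{\infty}m(t)\psi_{t}\,\frac{dt}{t}$. Using $\frac{d}{dt}\widehat{\phi}(t\xi)=\frac1t\widehat{\psi}(t\xi)$ and $\widehat{\phi}\in\mathcal{S}(\rn)$, the multiplier satisfies $|\widehat{K}(\xi)|\le\|m\|_{\infty}\int_{0}^{\infty}\big|\tfrac{d}{dt}\widehat{\phi}(t\xi)\big|\,dt\le\|m\|_{\infty}\int_{0}^{\infty}|\xi|\,|\nabla\widehat{\phi}(t\xi)|\,dt\le C$ uniformly in $\xi$, and the same bound holds for every $\widehat{K^{\alpha,\beta}}$; hence $T$ and all the $T^{\alpha,\beta}$ are uniformly bounded on $L^{2}(\rn)$. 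The rapid decay of $\psi$ and $\nabla\psi$ yields $|K(x)|\le\|m\|_{\infty}\int_{0}^{\infty}t^{-n}|\psi(x/t)|\frac{dt}{t}\le C|x|^{-n}$ and, after differentiating under the integral, $|\nabla K(x)|\le C|x|^{-n-1}$, so $T$ is a Calder\'on--Zygmund operator and hence bounded on $L^{q}(\rn)$ for all $1<q<\infty$. I would also note that $K^{\alpha,\infty}$ is a genuine locally bounded function (the cut $t\ge\alpha$ removes the singularity at the origin), that it obeys the same size and smoothness bounds, and that the error $K^{0,\alpha}:=K-K^{\alpha,\infty}$ enjoys the improved estimate $|K^{0,\alpha}(x)|\le C_{N}\,|x|^{-n}(\alpha/|x|)^{N}$ for $|x|>2\alpha$ and every $N>0$.

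The core of the argument is a Cotlar-type pointwise inequality: for any fixed $s$ with $1<s<p$,
\[
\sup_{\alpha>0}\big|T^{\alpha,\infty}f(x)\big|\le C\big(M_{s}f(x)+M(Tf)(x)\big),\qquad M_{s}f:=\big(M(|f|^{s})\big)^{1/s},
\]
for $f\in L^{p}(\rn)\cap L^{2}(\rn)$. To prove it I would fix $x$ and $\alpha$, set $B:=B(x,2\alpha)$ and split $f=f\chi_{B}+f\chi_{\rn\setminus B}=:f_{1}+f_{2}$. For the local part, the crude bound $|\psi_{t}\ast f_{1}(x)|\le C(\alpha/t)^{n}Mf(x)$, valid for all $t$, integrated over $t\ge\alpha$, gives $|T^{\alpha,\infty}f_{1}(x)|\le C\|m\|_{\infty}Mf(x)$. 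For the global part, $T^{\alpha,\infty}f_{2}(x)=Tf_{2}(x)-K^{0,\alpha}\ast f_{2}(x)$, where $|K^{0,\alpha}\ast f_{2}(x)|\le C\|m\|_{\infty}Mf(x)$ by the improved error estimate together with a summation over dyadic annuli; the smoothness of $K$ gives $|Tf_{2}(x)-Tf_{2}(z)|\le CMf(x)$ for $z\in B(x,\alpha)$, so $|Tf_{2}(x)|\le\frac{1}{|B(x,\alpha)|}\int_{B(x,\alpha)}|Tf_{2}|+CMf(x)\le M(Tf)(x)+\frac{1}{|B(x,\alpha)|}\int_{B(x,\alpha)}|Tf_{1}|+CMf(x)$, and the last average is $\le C\,M_{s}f(x)$ by H\"older's inequality and the $L^{s}$-boundedness of $T$. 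Adding the two contributions and using $Mf\le M_{s}f$ proves the inequality, uniformly in $\alpha$. Taking $L^{p}(\rn)$ norms and using that $M_{s}$ is bounded on $L^{p}(\rn)$ (since $s<p$) and that $M$ and $T$ are bounded on $L^{p}(\rn)$ (since $p>1$) gives $\|\mathcal{S}_{\boldsymbol{a,v};*}(\{\Phi_{t}\}_{t>0},f)\|_{L^{p}(\rn)}\le C\|f\|_{L^{p}(\rn)}$ for $f\in L^{p}(\rn)\cap L^{2}(\rn)$; a routine limiting argument (the operators $f\mapsto\max_{(N_{1},N_{2})\in F}|\sum_{i=N_{1}}^{N_{2}}v_{i}(\Phi_{a_{i+1}}-\Phi_{a_{i}})f|$ are continuous on $L^{p}(\rn)$ for each finite $F\subset\mathbb{Z}_{<}^{2}$, and one lets $F$ increase) extends the bound to all of $L^{p}(\rn)$.

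The hard part will be the Cotlar-type inequality: one must track carefully how the scale cut-off $t\ge\alpha$ interacts with the spatial size and smoothness of the kernels, and compare $T^{\alpha,\infty}$ with the single operator $T$ \emph{uniformly} in $\alpha$. A secondary technical point is justifying that $T$ is a bona fide Calder\'on--Zygmund operator --- that the principal value exists and that its kernel coincides with the stated locally integrable function away from the origin --- with all constants depending only on $n$, $p$, $\phi$, $\delta$ and $\|\boldsymbol{v}\|_{l^{\infty}(\mathbb{Z})}$, as claimed.
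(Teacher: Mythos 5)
Your proof is correct, and while it shares the paper's overall architecture (uniform $L^2$ bounds via Plancherel, Calder\'on--Zygmund kernel estimates, a Cotlar-type inequality), it reorganizes the argument in a genuinely different and somewhat cleaner way. The paper stays discrete throughout: it proves uniform $L^p$ bounds for the partial sums $\mathcal{S}_{\boldsymbol{a,v};N}$ (Propositions \ref{S L2} and \ref{S partial Lp}), and its Cotlar inequality (Theorem \ref{C ineq}) controls the truncated maximal operator $\mathcal{S}^{\ast}_{\boldsymbol{a,v};M}$ by $\mathcal{M}\left(\mathcal{S}_{\boldsymbol{a,v};(-M,M)}f\right)+\mathcal{M}_q f$, i.e.\ it compares an arbitrary partial sum to the largest one; this forces the normalization \eqref{add} (justified via Lemma \ref{addtion}), the quantitative lacunarity estimates of Lemma \ref{for C ineq}, and a final passage $M\to\infty$ via Fatou. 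You instead absorb the coefficients into a bounded step function $m(t)$, realize every partial sum as a scale truncation $K^{a_{N_1},a_{N_2+1}}\ast f$ of one fixed kernel $K=\int_0^\infty m(t)\psi_t\,\frac{dt}{t}$ with $\psi(y)=-\mathrm{div}\left(y\phi(y)\right)$, reduce the two-parameter supremum to $2\sup_{\alpha>0}\left|T^{\alpha,\infty}f\right|$, and prove the classical Cotlar inequality against the single untruncated operator $T$. This dispenses with \eqref{add}, Lemma \ref{addtion} and the auxiliary parameter $M$ altogether --- the lacunarity enters only through the fact that the intervals $[a_i,a_{i+1})$ tile $(0,\infty)$ --- at the modest price of a density argument from $L^p\cap L^2$ to $L^p$ at the end. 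The two points you flag as delicate are indeed the right ones, and both are routine here: the kernel representation $Tg(x)=\int K(x-y)g(y)\,dy$ for $x\notin\operatorname{supp}g$ follows by dominated convergence from the uniform bound $\left|K^{\alpha,\infty}(z)\right|\le C|z|^{-n}$, and the improved decay $\left|K^{0,\alpha}(z)\right|\le C_N|z|^{-n}(\alpha/|z|)^{N}$ for $|z|>2\alpha$ (which controls the mismatch between truncation in scale and truncation in space) comes directly from the rapid decay of $\psi$.
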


\begin{remark}\rm
\begin{itemize}
  \item [{\rm (i)}] When $\phi(x):=\frac{c_n}{(|x|^2+1)^{\frac{n+1}{2}}}$ with $c_n:=\frac{\Gamma({\frac{n+1}{2}})} {\pi^{\frac{n+1}{2}}}$, then $\phi_t(x)=c_n\frac{t}{ (t^2+|x|^2)^{\frac{n+1}{2}}}$ is just the Poisson kernel, and $\{\Phi_t\}_{t>0}$ is the classical Poisson semigroup $\{e^{-t\sqrt{-\Delta}}\}_{t>0}$ generated by $-\Delta$. In this case, Theorem \ref{S Lp} can be gotten by using the Bochner subordination formula with the results which were proved in \rm{\cite{CT}}.
  \item [{\rm (ii)}] From Theorem \ref{S Lp} and a standard argument, we can also obtain the weak-type (1,1) boundedness of $\mathcal{S}_{\boldsymbol{a,v};*}$.
\end{itemize}
\end{remark}

Based on the result in Theorem \ref{S Lp}, we can prove the boundedness of $\mathcal{S}_{\boldsymbol{a,v};*}\left(\{\Phi_{t}\}_{t>0}, \cdot \right)$ from $\bmoz$ to $\bloz$. Following some idea in \cite{AB}, we need assume that the $\delta$-lacunary sequence $\boldsymbol{a}:=\{a_{i}\}_{i\in \mathbb{Z}}$ satisfies the following condition:
\begin{equation}\label{add}
	1<\delta\leq \frac{a_{i+1}}{a_{i}}\leq \delta^{2},\quad \quad  \forall\ i\in \mathbb{Z}.
\end{equation}

\begin{thm}\label{main 3}
	Let $\boldsymbol{a}$ be a $\delta$-lacunary sequence of positive numbers satisfying $\eqref{add}$, $\boldsymbol{v}$ be a bounded sequence of complex numbers and $\phi\in \mathcal{S}(\rn)$ with $\int_{\rn}\phi(x)dx=1$. Then for $f\in \bmoz$, $\mathcal{S}_{\boldsymbol{a,v};*}\left(\{\Phi_{t}\}_{t>0}, f \right)$ is either  infinite everywhere or finite almost everywhere. And in the latter case, there exists  a constant $C>0$  depending on $n, \left\| \boldsymbol{v}\right\|_{l^{\infty}(\mathbb{Z})}, \delta$ and $\phi$ such that
	 $$\left\| \mathcal{S}_{\boldsymbol{a,v};*}\left(\{\Phi_{t}\}_{t>0}, f \right)\right\| _{\bloz}\leq C\|f\|
	_{\bmoz}.$$
\end{thm}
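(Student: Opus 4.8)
The plan is to follow the same scheme that (presumably) proves Theorems \ref{main O} and \ref{main V}, namely to fix a ball $B=B(x_0,r)$, split $f$ into a local piece and a far piece relative to a dilate of $B$, and control the $\bloz$-seminorm by estimating $\frac1{|B|}\int_B\bigl[\mathcal{S}_{\boldsymbol{a,v};*}(\{\Phi_t\}_{t>0},f)(x)-\einf_{y\in B}\mathcal{S}_{\boldsymbol{a,v};*}(\{\Phi_t\}_{t>0},f)(y)\bigr]\,dx$. Write $f=(f-f_{2B})\chi_{2B}+(f-f_{2B})\chi_{(2B)^c}+f_{2B}=:f_1+f_2+f_3$. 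Since each $\Phi_{a_i}$ reproduces constants ($\int\phi=1$), the differences $\Phi_{a_{i+1}}f_3-\Phi_{a_i}f_3=0$, so $f_3$ contributes nothing and we only deal with $f_1$ and $f_2$. By the sublinearity of $\mathcal{S}_{\boldsymbol{a,v};*}$ we get a pointwise bound $\mathcal{S}_{\boldsymbol{a,v};*}(\cdot,f)\le \mathcal{S}_{\boldsymbol{a,v};*}(\cdot,f_1)+\mathcal{S}_{\boldsymbol{a,v};*}(\cdot,f_2)$; the first term is handled by the $L^p$ (or weak $(1,1)$) boundedness from Theorem \ref{S Lp}, giving $\frac1{|B|}\int_B \mathcal{S}_{\boldsymbol{a,v};*}(\cdot,f_1)\lesssim \bigl(\frac1{|B|}\int_{2B}|f-f_{2B}|^p\bigr)^{1/p}\lesssim\|f\|_{\bmoz}$ by John--Nirenberg. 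This already shows finiteness a.e. on each ball (hence the dichotomy infinite-everywhere / finite-a.e., once one checks the value on overlapping balls propagates), and it contributes the "oscillation around its own average" part trivially since $\einf_B$ can only help.

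The heart of the matter is the far piece $f_2$ and the reason one needs the extra hypothesis \eqref{add}. For $x\in B$ one wants to show $\mathcal{S}_{\boldsymbol{a,v};*}(\{\Phi_t\}_{t>0},f_2)(x)$ differs from its infimum over $B$ by at most $C\|f\|_{\bmoz}$; in fact it is cleaner to show $\mathcal{S}_{\boldsymbol{a,v};*}(\{\Phi_t\}_{t>0},f_2)(x)\le \mathcal{S}_{\boldsymbol{a,v};*}(\{\Phi_t\}_{t>0},f_2)(x_0)+C\|f\|_{\bmoz}$ for all $x\in B$ (which gives the $\bloz$ bound because that constant-plus-value structure is exactly what $\einf$ subtracts off). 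To do this, fix $(N_1,N_2)\in\mathbb{Z}_<^2$ and estimate
\[
\Bigl|\sum_{i=N_1}^{N_2} v_i\bigl(\Phi_{a_{i+1}}f_2(x)-\Phi_{a_i}f_2(x)\bigr)\Bigr|
\le \Bigl|\sum_{i=N_1}^{N_2} v_i\bigl(\Phi_{a_{i+1}}f_2(x_0)-\Phi_{a_i}f_2(x_0)\bigr)\Bigr|+ \|\boldsymbol v\|_{l^\infty}\sum_{i=N_1}^{N_2}\bigl|\Delta_i(x)-\Delta_i(x_0)\bigr|,
\]
where $\Delta_i(y):=\Phi_{a_{i+1}}f_2(y)-\Phi_{a_i}f_2(y)$. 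The first term on the right is $\le \mathcal{S}_{\boldsymbol{a,v};*}(\cdot,f_2)(x_0)$, so everything reduces to bounding $\sum_{i\in\mathbb Z}\sup_{x\in B}|\Delta_i(x)-\Delta_i(x_0)|$ by $C\|f\|_{\bmoz}$. Here one writes $\Delta_i(x)-\Delta_i(x_0)=\bigl[(\phi_{a_{i+1}}-\phi_{a_i})*f_2\bigr](x)-\bigl[(\phi_{a_{i+1}}-\phi_{a_i})*f_2\bigr](x_0)$ and uses the standard kernel estimates for the Schwartz function: $|\phi_t(z)|\lesssim t^{-n}(1+|z|/t)^{-n-2}$ and its gradient analogue, together with the cancellation $\int(\phi_{a_{i+1}}-\phi_{a_i})=0$. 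Splitting into the range $a_i\lesssim r$ and $a_i\gtrsim r$ one gets, for each fixed $i$, a bound of order $\min\{(a_i/r)^{\beta},(r/a_i)^{\gamma}\}$ times a sum over the annuli $2^k 2B$ of $|f-f_{2B}|$ averages, each of which is $\lesssim k\|f\|_{\bmoz}$. Summing a geometric-type series in $i$: this is precisely where \eqref{add} is used — the $\delta$-lacunarity gives the upper geometric decay on one side, while the extra bound $a_{i+1}/a_i\le\delta^2$ prevents the sequence from skipping scales too fast and guarantees the sum $\sum_i \min\{(a_i/r)^\beta,(r/a_i)^\gamma\}$ is controlled by a constant depending only on $\delta$ (comparable to a sum over all integer scales). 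Putting the two pieces together yields $\mathcal{S}_{\boldsymbol{a,v};*}(\cdot,f)(x)\le \mathcal{S}_{\boldsymbol{a,v};*}(\cdot,f_1)(x)+\mathcal{S}_{\boldsymbol{a,v};*}(\cdot,f_2)(x_0)+C\|f\|_{\bmoz}$, and integrating over $B$ after subtracting $\einf_B$ finishes the $\bloz$ estimate.

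I expect the main obstacle to be the telescoping-versus-lacunary bookkeeping in the far term: because the differential transform is a partial-sum supremum rather than an $\ell^\rho$ or $\ell^2$ object, one cannot absorb the telescoping cancellation into a single difference $\Phi_{a_{N_2+1}}f_2-\Phi_{a_{N_1}}f_2$ uniformly, so one must genuinely sum the per-scale displacement estimates $\sup_{x\in B}|\Delta_i(x)-\Delta_i(x_0)|$ over all $i\in\mathbb Z$ and show that series converges with the right constant — and it is exactly here that condition \eqref{add}, not mere $\delta$-lacunarity, is indispensable. A secondary technical point is establishing the everywhere-infinite / a.e.-finite dichotomy cleanly: one shows that if $\mathcal{S}_{\boldsymbol{a,v};*}(\{\Phi_t\}_{t>0},f)(x_1)<\infty$ for one point $x_1$, then the far-tail estimate above (centered at $x_1$) together with Theorem \ref{S Lp} applied to the local part forces finiteness a.e. on every ball containing $x_1$, and a connectedness/exhaustion argument spreads this to all of $\rn$.
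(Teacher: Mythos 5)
There is a genuine gap at the final step, where you pass from the pointwise bound $\mathcal{S}_{\boldsymbol{a,v};*}(\{\Phi_t\}_{t>0},f)(x)\le \mathcal{S}_{\boldsymbol{a,v};*}(\{\Phi_t\}_{t>0},f_1)(x)+\mathcal{S}_{\boldsymbol{a,v};*}(\{\Phi_t\}_{t>0},f_2)(x_0)+C\|f\|_{\bmoz}$ to the $\bloz$ estimate by ``integrating over $B$ after subtracting $\einf_B$''. Averaging your inequality gives $\frac1{|B|}\int_B\mathcal{S}_{\boldsymbol{a,v};*}(f)\le C\|f\|_{\bmoz}+\mathcal{S}_{\boldsymbol{a,v};*}(f_2)(x_0)$, which is a BMO-type estimate with the constant $c_B=\mathcal{S}_{\boldsymbol{a,v};*}(f_2)(x_0)$; but to conclude the BLO bound you must still show $\mathcal{S}_{\boldsymbol{a,v};*}(f_2)(x_0)\le \einf_{y\in B}\mathcal{S}_{\boldsymbol{a,v};*}(f)(y)+C\|f\|_{\bmoz}$, and this does not follow from what you have. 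The point is that $\mathcal{S}_{\boldsymbol{a,v};*}(f_2)(y)$ is \emph{not} a lower bound for $\mathcal{S}_{\boldsymbol{a,v};*}(f)(y)$: one only has $\mathcal{S}_{\boldsymbol{a,v};*}(f_2)(y)\le\mathcal{S}_{\boldsymbol{a,v};*}(f)(y)+\mathcal{S}_{\boldsymbol{a,v};*}(f_1)(y)$, and $\mathcal{S}_{\boldsymbol{a,v};*}(f_1)$ is controlled only in $L^p$-average over $B$, not pointwise and not in essential infimum; so the essential infimum of $\mathcal{S}_{\boldsymbol{a,v};*}(f)$ over $B$ could a priori be attained on the small exceptional set where $\mathcal{S}_{\boldsymbol{a,v};*}(f_1)$ is large and lie far below $\mathcal{S}_{\boldsymbol{a,v};*}(f_2)(x_0)-C\|f\|_{\bmoz}$. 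The paper circumvents exactly this by decomposing the \emph{ball} rather than only the function: with $a_{i_0}<r\le a_{i_0+1}$ it splits $B$ into the sets $B_1,B_2,B_3$ on which the supremum over $(N_1,N_2)$ is attained with $N_2\le i_0-1$, with $N_1\ge i_0+1$, or straddling $i_0$. On $B_1$ the operator value coincides with a small-scale restricted supremum whose average over $B$ is $\lesssim\|f\|_{\bmoz}$, so the $\einf$ is simply dropped; on $B_2$ the relevant quantity is the large-scale restricted supremum, which \emph{is} a genuine pointwise lower bound for $\mathcal{S}_{\boldsymbol{a,v};*}(f)(y)$ at every $y\in B$ and whose oscillation over $B$ is $\le C\|f\|_{\bmoz}$ by the smoothness estimates at scales $\ge r$ applied to the full $f-f_B$ (not merely to $f_2$); $B_3$ is reduced to the other two cases. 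Your telescoping and kernel estimates for the individual pieces are fine, but without some device of this kind the subtraction of $\einf_B$ is not justified.

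A secondary point: your diagnosis of where \eqref{add} is indispensable is off. The convergence of $\sum_i\sup_{x\in B}\left|\Delta_i(x)-\Delta_i(x_0)\right|$ does not require the upper bound $a_{i+1}/a_i\le\delta^2$: writing $\Delta_i=\int_{a_i}^{a_{i+1}}\partial_t\Phi_tf_2\,dt$, the intervals tile $(0,\infty)$ and the whole sum is dominated by $\int_0^\infty\left|\partial_t\left[\Phi_tf_2(x)-\Phi_tf_2(x_0)\right]\right|dt$, which converges by \eqref{two outer partial}--\eqref{three outer partial} for any monotone sequence. Where \eqref{add} genuinely enters the paper's argument is in the straddling case $B_3$: one needs $a_{i_0+2}\le\delta^4a_{i_0}\le Cr$ so that the single difference crossing the scale $r$ can be absorbed into the small-scale part at a fixed dilate of $B$; without the upper bound in \eqref{add} the term $\Phi_{a_{i_0+1}}f-\Phi_{a_{i_0}}f$ could jump from a scale below $r$ to one arbitrarily far above it, and neither regime's estimate applies to it.
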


From the results in Theorems \ref{main O}, \ref{main V} and \ref{main 3}, we should note that, when $f\in \rm{BMO(\rn)},$ $\mathcal{O}_{\{t_{i}\}_{i\in\mathbb{N}}}\left(\{\Phi_{t}\}_{t>0}, f \right)(x)<\infty,a.e.\ x\in\rn,$ but for the variation operator $ \mathcal{V}_{\rho}\left(\{\Phi_{t}\}_{t>0},f\right)$ and maximal differential transform $\mathcal{S}_{\boldsymbol{a,v};*}\left(\{\Phi_{t}\}_{t>0}, f \right)$, they are either  infinite everywhere or finite almost everywhere. This fact is the difference among these three operators when they act on the endpoint spaces.

This paper is organized as follows. Section \ref{pre} is devoted to give some known estimates on $\{\phi_{t}\}_{t>0}$, as well as some useful properties of the functions in $\bmoz$. To better study the behavior of the maximal differential transform, we also analyze its ``partial sums" in this section. The proof of Theorems \ref{main O}, \ref{main V}, \ref{S Lp} and  \ref{main 3} are presented in Sections \ref{sec3}-\ref{sec4}, respectively. We obtain these results by following some idea from \cite{HBLWY}, see also \cite{LiY, LY, YY, YYZ, Z}.  We also remark that, to our best knowledge, the boundedness of variation, oscillation operators and maximal differential transforms from $\bmoz$ to $\bloz$ are first given in this paper.
\par
Throughout the paper, $C$ and $c$ are positive constants, which may vary from line to line. For any $k\in \mathbb{R}_+$ and ball $B:=B(x_{0}, r)$, $kB:=B(x_{0},kr)$.

\section{Preliminaries}\label{pre}

At first, we present some preliminary results on $\{\phi_{t}\}_{t>0}$, which are widely used throughout our paper. For convenience, we recall the definition of the Schwartz functions. A smooth real-valued function $\phi$ on $\rn$ is called a Schwartz function, if for every pair of multi-indices $\alpha$ and $\beta$,
\begin{equation}\label{S space}\rho_{\alpha,\beta}(\phi):= \sup\limits_{x\in \rn}\left|x^{\alpha}\partial^{\beta}\phi(x)\right|<\infty.
\end{equation}
\begin{lem}
For $\phi\in \mathcal{S}(\rn)$, and $\phi_{t}(x):=t^{-n}\phi(x/t),\ t>0$, there exists a constant $C>0$ depending on $n$ and $\phi$ such that
\begin{itemize}
	 \item [{\rm (i)}]for any $x\in \rn$,
	\begin{equation}\label{one partial}
		\left| \frac{\partial}{\partial t} \left( \phi_{t}(x)\right) \right| \leq
		\frac{ C}{(t+\left|x \right|)^{n+1}};
	\end{equation}
	\item [{\rm(ii)}] for any $x\in B(x_{0},r)$ with $x_{0} \in \rn$ and $r>0$, $y\in \left( B(x_{0},2r)\right)^{c} $,
	\begin{align}
		\left| \frac{\partial}{\partial t}\left( \phi_{t}(x-y)\right) \right| &\leq  \frac{C}{\left| y-x_{0}\right|^{n+1} };\label{two outer partial}	
	\end{align}
	\item [{\rm(iii)}]for any $x, y\in B(x_{0},r)$ with $x_{0} \in \rn$ and $r>0$, $z\in B(x_{0},2r)$,
	\begin{align}
		\left| \frac{\partial}{\partial t}\left( \phi_{t}(x-z)-\phi_{t}(y-z)\right) \right| &\leq C\frac{\left|x-y \right| }{t^{n+2}} \label{three inner partial};
	\end{align}
	\item [{\rm (iv)}]for any $x, y\in B(x_{0},r)$ with $x_{0} \in \rn$ and $r>0$, $z\in \left( B(x_{0},2r)\right)^{c}$,
	\begin{align}
		\left| \frac{\partial}{\partial t}\left( \phi_{t}(x-z)-\phi_{t}(y-z)\right) \right| &\leq C\frac{\left|x-y \right| }{t^{\frac{3}{2}}\left|z-x_{0} \right|^{n+\frac1{2}}}\label{three outer partial}.
	\end{align}
\end{itemize}
\end{lem}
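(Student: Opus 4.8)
The plan is to prove the lemma by differentiating the explicit formula for $\phi_t$ and using the Schwartz decay of $\phi$ together with the elementary estimate that separates the ``close'' and ``far'' regimes of $|x|$ relative to $t$.

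\textbf{Step 1: compute $\partial_t \phi_t$.} Since $\phi_t(x)=t^{-n}\phi(x/t)$, the chain rule gives
\begin{equation*}
\frac{\partial}{\partial t}\bigl(\phi_t(x)\bigr)=-n\,t^{-n-1}\phi(x/t)-t^{-n-2}\,x\cdot(\nabla\phi)(x/t)=t^{-n-1}\,\psi(x/t),
\end{equation*}
where $\psi(y):=-n\phi(y)-y\cdot\nabla\phi(y)$. Because $\phi\in\mathcal{S}(\rn)$, we have $\psi\in\mathcal{S}(\rn)$ as well, and in particular $|\psi(y)|\le C(1+|y|)^{-n-2}$ with $C$ depending only on $n$ and finitely many seminorms $\rho_{\alpha,\beta}(\phi)$. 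Hence $|\partial_t\phi_t(x)|\le C\,t^{-n-1}(1+|x|/t)^{-n-2}=C\,t\,(t+|x|)^{-n-2}$, which already implies \eqref{one partial} after bounding $t\le t+|x|$.

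\textbf{Step 2: the three convolution-difference estimates.} For \eqref{two outer partial}, with $x\in B(x_0,r)$ and $y\notin B(x_0,2r)$ we have $|x-y|\ge\tfrac12|y-x_0|$, so the bound from Step 1 applied at $x-y$ gives $|\partial_t\phi_t(x-y)|\le C t\,(t+|x-y|)^{-n-2}\le C|x-y|^{-n-1}\le C|y-x_0|^{-n-1}$, using $t(t+s)^{-n-2}\le s^{-n-1}$. For the two ``inner/outer'' difference estimates \eqref{three inner partial} and \eqref{three outer partial}, I would write $\partial_t\phi_t(x-z)-\partial_t\phi_t(y-z)=t^{-n-1}\bigl[\psi((x-z)/t)-\psi((y-z)/t)\bigr]$ and apply the mean value theorem to $\psi$: the difference is bounded by $t^{-n-1}\cdot|x-y|/t\cdot\sup_{\xi}|\nabla\psi(\xi)|$ where $\xi$ ranges over the segment between $(x-z)/t$ and $(y-z)/t$. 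For \eqref{three inner partial} one just bounds $|\nabla\psi|\le C$ globally, yielding $C|x-y|/t^{n+2}$. For \eqref{three outer partial}, with $z\notin B(x_0,2r)$ one uses instead the Schwartz decay of $\nabla\psi$: on the segment between the two points, $|\xi|\gtrsim|z-x_0|/t$, so $|\nabla\psi(\xi)|\le C(t/|z-x_0|)^{n+3/2}$ (keeping only the exponent $n+3/2$), and combining gives $C|x-y|\,t^{-n-2}\cdot t\cdot(t/|z-x_0|)^{n+3/2}\cdot t^{-1}$... more cleanly, $t^{-n-2}|x-y|\cdot t^{n+3/2}|z-x_0|^{-n-3/2}=C|x-y|\,t^{-1/2}|z-x_0|^{-n-1/2}$, which is exactly \eqref{three outer partial}.

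\textbf{Main obstacle.} The only delicate point is the bookkeeping in \eqref{three outer partial}: one must verify that along the \emph{entire} segment joining $(x-z)/t$ and $(y-z)/t$ the argument stays comparable to $|z-x_0|/t$ in size, which uses $x,y\in B(x_0,r)$, $z\notin B(x_0,2r)$ and hence $|x-z|,|y-z|\ge|z-x_0|/2$; and one must choose which power of the Schwartz decay to spend (here $n+\tfrac32$, leaving a factor $t^{-1/2}$) so that the powers of $t$ and $|z-x_0|$ come out as stated. Everything else is a routine consequence of $\psi,\nabla\psi\in\mathcal{S}(\rn)$ and the inequality $t(t+s)^{-n-2}\le s^{-n-1}$.
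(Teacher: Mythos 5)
Your approach is essentially the paper's: write $\partial_t\phi_t(x)=t^{-n-1}\psi(x/t)$ with $\psi:=-n\phi-y\cdot\nabla\phi\in\mathcal{S}(\rn)$, use the Schwartz decay of $\psi$ for (i) and (ii), and apply the mean value theorem to $\psi$ for (iii) and (iv), together with the observation that the whole segment joining $(x-z)/t$ and $(y-z)/t$ has norm comparable to $|z-x_0|/t$ when $z\notin B(x_0,2r)$. Parts (i), (ii) and (iii) are correct as written.

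In (iv), however, there is an arithmetic slip that matters. With the decay exponent $n+\tfrac32$ you chose, your own computation gives $t^{-n-2}|x-y|\cdot t^{n+3/2}|z-x_0|^{-n-3/2}=|x-y|\,t^{-1/2}|z-x_0|^{-n-3/2}$, not $|x-y|\,t^{-1/2}|z-x_0|^{-n-1/2}$ as you wrote, and neither expression is the stated bound $|x-y|\,t^{-3/2}|z-x_0|^{-n-1/2}$: the bound you actually derived is weaker than \eqref{three outer partial} precisely when $t>|z-x_0|$, so it does not imply the lemma in all regimes. The fix is one exponent: spend only $n+\tfrac12$ powers of the Schwartz decay, i.e.\ $|\nabla\psi(\xi)|\leq C(t/|z-x_0|)^{n+\frac12}$, so that $t^{-n-2}\cdot t^{n+\frac12}=t^{-\frac32}$ and the remaining factor is $|z-x_0|^{-n-\frac12}$, which is exactly \eqref{three outer partial}; this is the same splitting $t^{-n-2}=t^{-\frac32}|\xi|^{-n-\frac12}\cdot|\xi/t|^{n+\frac12}$ that the paper uses.
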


\begin{proof}
	For simplicity, we only give the proof of \eqref{two outer partial}, \eqref{three inner partial} and \eqref{three outer partial}. By a direct calculation, we show \eqref{two outer partial} as follows.
\begin{align*}
\left| \frac{\partial}{\partial t}\left( \phi_{t}(x-y)\right) \right| &=\left|\frac{n\phi(\frac{x-y}{t})}{t^{n+1}}+
\frac{\left(x-y \right)\cdot\nabla \phi\left( \frac{x-y}{t}\right)}{t^{n+2}} \right|\leq \frac{C}{\left| y-x_{0}\right|^{n+1} }.		
\end{align*}

For \eqref{three inner partial}, applying the mean value theorem, there exist $\xi$ and $\eta$ on the segment $\overline{(x-z)(y-z)}$ such that
\begin{align*}
	&\left| \frac{\partial}{\partial t}\left( \phi_{t}(x-z)-\phi_{t}(y-z)\right) \right|\\&\quad =\Bigg|\left[ \frac{n\phi(\frac{x-z}{t})}{t^{n+1}}+
	\frac{\left(x-z \right)\cdot\nabla\phi\left( \frac{x-z}{t}\right)  }{t^{n+2}}\right]-\left[ \frac{n\phi\left( \frac{y-z}{t}\right) }{t^{n+1}}+\frac{\left(y-z \right)\cdot\nabla\phi\left( \frac{y-z}{t}\right)  }{t^{n+2}}\right]\Bigg|\\
	&\quad\leq C\frac{\left|x-y \right| }{t^{n+2}}\left[   \left|\nabla\phi\left( \frac{\xi}{t}\right)  \right|+ \left|\nabla\left(\frac{\eta}{t} \cdot\nabla\phi\left( \frac{\eta}{t}\right) \right) \right|\right]\leq C\frac{\left|x-y \right| }{t^{n+2}}.
\end{align*}

For \eqref{three outer partial}, as previous proof, there exist $\xi$ and $\eta$ on the segment $\overline{(x-z)(y-z)}$ such that
\begin{align*}
&\left| \frac{\partial}{\partial t}\left( \phi_{t}(x-z)-\phi_{t}(y-z)\right) \right|\\&\quad	\leq C \frac{\left|x-y \right| }{t^{n+2}}\left[   \left|\nabla\phi\left( \frac{\xi}{t}\right)\right|+ C \left|\nabla\left( \frac{\eta}{t}\cdot\nabla\phi\left( \frac{\eta}{t}\right) \right) \right|\right]  \\
&\quad\leq C\frac{\left|x-y \right| }{t^{\frac{3}{2}}\left| \xi\right|^{n+\frac1{2}} }\left| \frac{ \xi}{t}\right|^{n+\frac1{2}}\left|\nabla\phi\left( \frac{\xi}{t}\right)  \right|+C\frac{\left|x-y \right| }{t^{\frac{3}{2}}\left| \eta\right|^{n+\frac1{2}} }\left| \frac{ \eta}{t}\right|^{n+\frac1{2}}\left|\nabla\left(\frac{\eta}{t}\cdot \nabla\phi\left( \frac{\eta}{t}\right) \right) \right|\\
&\quad\leq C\frac{\left|x-y \right| }{t^{\frac{3}{2}}\left|z-x_{0} \right|^{n+\frac1{2}}},	
\end{align*}
where in the last inequality we have used that $\left| \xi\right|,\left| \eta\right| \sim \left|z-x_{0} \right|.$
\end{proof}
Now we recall some well known properties for the functions in $\bmoz$, whose proofs can be found for example in \cite{GTM}.

\begin{lem}\label{BMO pro}
For $f\in\bmoz$, any ball $B\subset\rn$ and positive integer $m$, there exists $C>0$ depending only on $n $ such that the following statements hold:
\begin{itemize}
	\item [{\rm(i)}]
	\begin{equation*}
		\left|f_{B}-f_{2^{m}B} \right|\leq C m\|f\|_{\bmoz};
	\end{equation*}
	\item [{\rm(ii)}]
	\begin{equation*}
		\sup_{B}\left( \frac1{\left|B \right| }\int_{B} \left| f(x)-f_{B}\right|^{2}dx\right)^\frac1{2} \leq C\|f\|_{\bmoz},
	\end{equation*}
where  the supremum is taken over all balls $B$ in $\rn$;
    \item [{\rm(iii)}]
    \begin{equation*}
    	\frac1{\left|2^{m}B \right| }\int_{2^{m}B} \left| f(x)-f_{B}\right| dx\leq C m\|f\|_{\bmoz}.
    \end{equation*}
\end{itemize}
\end{lem}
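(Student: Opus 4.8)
The plan is to establish the three estimates in turn, deriving (iii) from (i) and invoking the John--Nirenberg inequality for (ii). For part (i), I would use a telescoping argument across dyadic scales. First observe that for any ball $B$ and its double $2B$, since $B\subset 2B$ and $|2B|=2^{n}|B|$,
$$\left|f_{B}-f_{2B}\right|=\left|\frac1{|B|}\int_{B}\lf(f(x)-f_{2B}\r)\,dx\right|\leq\frac{2^{n}}{|2B|}\int_{2B}\left|f(x)-f_{2B}\right|\,dx\leq 2^{n}\|f\|_{\bmoz}.$$
Applying the same bound to each pair $2^{k}B\subset 2^{k+1}B$ and summing the telescoping identity $f_{B}-f_{2^{m}B}=\sum_{k=0}^{m-1}\lf(f_{2^{k}B}-f_{2^{k+1}B}\r)$ yields $|f_{B}-f_{2^{m}B}|\leq 2^{n}m\|f\|_{\bmoz}$, which is precisely (i).

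For part (ii), the key tool is the John--Nirenberg inequality, which furnishes constants $c_{1},c_{2}>0$ depending only on $n$ such that for every ball $B$ and every $\lambda>0$,
$$\left|\left\{x\in B:\left|f(x)-f_{B}\right|>\lambda\right\}\right|\leq c_{1}|B|\exp\lf(-\frac{c_{2}\lambda}{\|f\|_{\bmoz}}\r).$$
From here I would apply the layer-cake (distribution function) formula to $|f-f_{B}|^{2}$ averaged over $B$, substitute the exponential tail estimate, and evaluate the resulting elementary integral $\int_{0}^{\infty}\lambda\,e^{-c_{2}\lambda/\|f\|_{\bmoz}}\,d\lambda$, which is a constant multiple of $\|f\|_{\bmoz}^{2}$; taking square roots gives (ii).

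Part (iii) then follows by combining (i) with the triangle inequality. Splitting $|f(x)-f_{B}|\leq|f(x)-f_{2^{m}B}|+|f_{2^{m}B}-f_{B}|$ and averaging over $2^{m}B$, the first term contributes at most $\|f\|_{\bmoz}$ directly from the definition of the $\bmoz$ norm applied to the ball $2^{m}B$, while the second term is bounded by $Cm\|f\|_{\bmoz}$ thanks to part (i); together these yield the claimed $Cm\|f\|_{\bmoz}$ estimate.

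The main obstacle is part (ii): unlike (i) and (iii), which are elementary consequences of the definition together with a telescoping/triangle-inequality argument, the $L^{2}$-averaged control in (ii) cannot be extracted directly from the $L^{1}$ definition of $\bmoz$ and genuinely requires the exponential integrability supplied by John--Nirenberg. Since the paper cites \cite{GTM} for these standard facts, one may either invoke John--Nirenberg as a black box or, if a self-contained treatment is preferred, prove it via the Calder\'on--Zygmund stopping-time decomposition before deducing (ii).
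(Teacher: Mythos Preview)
Your argument is correct and is exactly the standard textbook proof: telescoping over dyadic scales for (i), John--Nirenberg plus the layer-cake formula for (ii), and the triangle inequality combined with (i) for (iii). The paper itself does not supply a proof but simply refers to \cite{GTM}, where precisely this approach is recorded, so your proposal aligns with what the paper cites.
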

The $L^{p}$ boundedness properties for the variation operator $\mathcal{V}_{\rho}(\{\Phi_{t}\}_{t>0},\cdot)$ and the oscillation operator  $ \mathcal{O}_{\{t_{i}\}_{i\in\mathbb{N}}}\left(\{\Phi_{t}\}_{t>0}, \cdot \right)$, were studied in \cite{LHH}. We provide here the precise statement as follows.
\begin{lem}\label{O and V Lp}
	Let $\phi, \rho, \{t_{i}\}_{i\in\mathbb{N}} $ and $\{\Phi_{t}\}_{t>0}$ be as in Theorems \ref{main O} and \ref{main V}. Then the variation operator $\mathcal{V}_{\rho}(\{\Phi_{t}\}_{t>0},\cdot)$ and  the oscillation operator $ \mathcal{O}_{\{t_{i}\}_{i\in\mathbb{N}}}\left(\{\Phi_{t}\}_{t>0}, \cdot \right)$ are both bounded from $L^{p}(\rn)$ into $L^{p}(\rn)$, for every $1<p<\infty$.
\end{lem}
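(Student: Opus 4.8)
The plan is to regard $f\mapsto(\Phi_{t}f)_{t>0}$ as a single operator $T$ taking values in a Banach space $\mathcal{E}$ of functions of the scale parameter, chosen so that $\mathcal{V}_{\rho}(\{\Phi_{t}\}_{t>0},f)(x)=\|t\mapsto\Phi_{t}f(x)\|_{\mathcal{E}_{\rho}}$, with $\mathcal{E}_{\rho}$ the space of classes (modulo additive constants) of functions on $(0,\infty)$ of finite $\rho$-variation, and $\mathcal{O}_{\{t_{i}\}_{i\in\mathbb{N}}}(\{\Phi_{t}\}_{t>0},f)(x)=\|t\mapsto\Phi_{t}f(x)\|_{\mathcal{E}_{\{t_{i}\}}}$, with $\mathcal{E}_{\{t_{i}\}}$ the corresponding $\ell^{2}$-type oscillation space. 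It then suffices to prove that $T$ maps $L^{p}(\rn)$ into the vector-valued space $L^{p}(\rn;\mathcal{E})$ for $1<p<\infty$, and this would follow from the vector-valued Calder\'on--Zygmund theorem once an $L^{2}$ bound and the standard kernel estimates are in hand.

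For the $L^{2}$ bound I would split the scale parameter dyadically, writing the $\rho$-variation as a ``long'' part, namely the $\rho$-variation of the lacunary sequence $(\Phi_{2^{k}}f)_{k\in\mathbb{Z}}$, plus a ``short'' part collecting the $\ell^{2}$-sum over $k$ of the variations inside $[2^{k},2^{k+1}]$. Cauchy--Schwarz inside each block bounds the short part by the square function $g(f)(x)=\big(\int_{0}^{\infty}|\psi_{t}\ast f(x)|^{2}\,\frac{dt}{t}\big)^{1/2}$, where $\psi(y):=-n\phi(y)-y\cdot\nabla\phi(y)=-\mathrm{div}(y\phi(y))$, so that $t\,\partial_{t}\phi_{t}=\psi_{t}$; since $\psi\in\mathcal{S}(\rn)$ and $\int_{\rn}\psi\,dy=0$, the function $g$ is bounded on every $L^{p}(\rn)$, $1<p<\infty$, by classical Littlewood--Paley theory. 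For the long part one invokes L\'epingle's inequality --- this is exactly where $\rho>2$ enters, cf. \cite{CJRW} --- reducing the $\rho$-variation of $(\Phi_{2^{k}}f)_{k}$ in $L^{p}$ to $\lambda$-jump terms and a square function built from the mean-zero kernels $\phi_{2^{k}}-\phi_{2^{k+1}}$, both again controlled by Littlewood--Paley estimates. The oscillation operator is handled by the same scheme: the arbitrary sequence $\{t_{i}\}$ is refined by the dyadic endpoints it straddles, the within-block contributions are absorbed into $g(f)$, and the remaining coarse part is the oscillation along $\{2^{k}\}$, which is $L^{p}$-bounded by the classical lacunary oscillation estimate.

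For the kernel estimates, note that $\mathcal{E}$-norms are insensitive to additive constants in $t$ (and $\phi_{t}(z)\to0$ as $t\to\infty$), so the size of the vector-valued kernel at $(x,y)$ is $\|t\mapsto\phi_{t}(x-y)\|_{\mathcal{E}}$ and the H\"ormander-type quantity is $\|t\mapsto\phi_{t}(x-y)-\phi_{t}(x'-y)\|_{\mathcal{E}}$. Using $\|h\|_{\mathcal{E}_{\rho}},\ \|h\|_{\mathcal{E}_{\{t_{i}\}}}\le\int_{0}^{\infty}|h'(t)|\,dt$ together with the pointwise bounds \eqref{one partial}, \eqref{two outer partial}, \eqref{three inner partial} and \eqref{three outer partial} of the previous lemma, one checks the size condition $\|t\mapsto\phi_{t}(x-y)\|_{\mathcal{E}}\lesssim|x-y|^{-n}$ directly from \eqref{one partial}, and the regularity condition $\int_{|x-y|>2|x-x'|}\|t\mapsto\phi_{t}(x-y)-\phi_{t}(x'-y)\|_{\mathcal{E}}\,dy\lesssim1$ by combining \eqref{three outer partial} (whose extra decay $t^{-3/2}|z-x_{0}|^{-n-1/2}$ makes the $t$-integral converge at infinity and the subsequent $y$-integral converge) with \eqref{two outer partial} for small $t$. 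With the $L^{2}$ bound and these two conditions, the vector-valued Calder\'on--Zygmund theorem upgrades to $L^{p}$ boundedness for all $1<p<\infty$, and yields weak type $(1,1)$ as a by-product.

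The main obstacle is the $L^{2}$ estimate for the long-variation and for the oscillation operator: unlike a pure square function these are not amenable to Plancherel alone and must be routed through L\'epingle's inequality and the associated $\lambda$-jump inequalities --- which is also the reason $\rho>2$ cannot be relaxed --- while for $\mathcal{O}_{\{t_{i}\}}$ the extra difficulty is that $\{t_{i}\}$ is an arbitrary rather than a lacunary sequence. The kernel estimates, by contrast, are routine given the Schwartz decay of $\phi$ and the pointwise bounds already recorded above. All of this was carried out by Liu in \cite{LHH}, to which we refer for the details.
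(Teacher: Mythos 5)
The paper does not actually prove this lemma: it is quoted as a known result from Liu \cite{LHH}, and your proposal ultimately defers to the same reference, so the two are in essential agreement. Your sketch (long/short variation splitting with Cauchy--Schwarz and the mean-zero square function for the short part, L\'epingle/jump inequalities for the lacunary part where $\rho>2$ enters, and vector-valued Calder\'on--Zygmund theory with the kernel bounds \eqref{one partial}--\eqref{three outer partial} to pass from $L^{2}$ to all $1<p<\infty$) is a faithful outline of the standard argument behind that citation.
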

In order to better understand the behavior of the maximal differential transform, we shall analyze its ``partial sums" defined as follows. For each $N:=(N_{1},N_{2})\in \mathbb{Z}_{<}^{2}$, $f\in L^{p}(\rn), 1\leq p\leq \infty$, we define the partial sum operators
\begin{equation}\label{partial sum}
	\mathcal{S}_{\boldsymbol{a,v};N}\left(\{\Phi_{t}\}_{t>0}, f \right)(x):=\sum\limits_{i=N_{1}}^{N_{2}}v_{i}\left(\Phi_{a_{i+1}}f(x)-\Phi_{a_{i}}f(x) \right), x\in \rn.
\end{equation}
Then the maximal differential transform can be expressed as
$$\mathcal{S}_{\boldsymbol{a,v};*}\left(\{\Phi_{t}\}_{t>0}, f \right)(x)=\sup\limits_{N=(N_{1},N_{2})\in \mathbb{Z}_{<}^{2}}\left|\mathcal{S}_{\boldsymbol{a,v};N}\left(\{\Phi_{t}\}_{t>0}, f \right)(x) \right| .$$
For $f\in L^{p}(\rn), 1\leq p\leq \infty$, then
$$\mathcal{S}_{\boldsymbol{a,v};N}\left(\{\Phi_{t}\}_{t>0}, f \right)(x)=\int_{\rn}K_{\boldsymbol{a,v};N}(y)f(x-y)dy$$
with the kernel
\begin{equation}\label{Kernel}
K_{\boldsymbol{a,v};N}(y):=\sum\limits_{i=N_{1}}^{N_{2}}v_{i}\left(\phi_{a_{i+1}}(y)-\phi_{a_{i}}(y) \right).
\end{equation}
The following lemma contains the size description  of the kernel and the smoothness estimates that are required in the Calder\'{o}n-Zygmund theory.
\begin{lem}\label{ S kernel es }
There exists constant $C>0$ depending on $n, \left\| \boldsymbol{v}
\right\|_{l^{\infty}(\mathbb{Z})} $ and $\phi$ (not on $N$) such that, for any $y\neq 0$,
\begin{itemize}
	\item [{\rm(i)}] $\left| K_{\boldsymbol{a,v};N}(y)\right|\leq \frac{C}{\left| y\right|^{n} }$,
	\item [{\rm(ii)}]$\left| \nabla K_{\boldsymbol{a,v};N}(y)\right|\leq  \frac{C}{\left| y\right|^{n+1} }$.
\end{itemize}
\end{lem}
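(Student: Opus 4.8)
The plan is to exploit a telescoping-in-$t$ representation of the kernel that collapses the dependence on $N$ altogether. Since $\boldsymbol{a}$ is $\delta$-lacunary with $\delta>1$, the sequence $\{a_i\}_{i\in\mathbb{Z}}$ is strictly increasing, so for a fixed $N=(N_1,N_2)\in\mathbb{Z}_<^2$ the intervals $[a_i,a_{i+1}]$, $i=N_1,\dots,N_2$, have pairwise disjoint interiors and union $[a_{N_1},a_{N_2+1}]$. As $t\mapsto\phi_t(y)$ is $C^1$ on $(0,\infty)$ for each fixed $y\neq0$, the fundamental theorem of calculus gives $\phi_{a_{i+1}}(y)-\phi_{a_i}(y)=\int_{a_i}^{a_{i+1}}\frac{\partial}{\partial t}\phi_t(y)\,dt$, and hence
\begin{align*}
\left|K_{\boldsymbol{a,v};N}(y)\right|
&\le\|\boldsymbol{v}\|_{l^\infty(\mathbb{Z})}\sum_{i=N_1}^{N_2}\int_{a_i}^{a_{i+1}}\left|\frac{\partial}{\partial t}\phi_t(y)\right|\,dt\\
&=\|\boldsymbol{v}\|_{l^\infty(\mathbb{Z})}\int_{a_{N_1}}^{a_{N_2+1}}\left|\frac{\partial}{\partial t}\phi_t(y)\right|\,dt
\le\|\boldsymbol{v}\|_{l^\infty(\mathbb{Z})}\int_0^\infty\left|\frac{\partial}{\partial t}\phi_t(y)\right|\,dt,
\end{align*}
which is now manifestly independent of $N_1,N_2$. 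Substituting \eqref{one partial} and using $\int_0^\infty(t+|y|)^{-(n+1)}\,dt=n^{-1}|y|^{-n}$ then yields (i).

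For (ii) I would run the same scheme after establishing a gradient analogue of \eqref{one partial}, namely
\begin{equation*}
\left|\frac{\partial}{\partial t}\bigl(\nabla_x\phi_t(x)\bigr)\right|\le\frac{C}{(t+|x|)^{n+2}},\qquad x\in\rn,\ t>0.
\end{equation*}
This is proved exactly as \eqref{one partial}: from $\nabla_x\phi_t(x)=t^{-n-1}(\nabla\phi)(x/t)$ one computes $\frac{\partial}{\partial t}\bigl(\nabla_x\phi_t(x)\bigr)=-(n+1)t^{-n-2}(\nabla\phi)(x/t)-t^{-n-3}\,x\cdot(D^2\phi)(x/t)$, and the Schwartz bounds $|(\nabla\phi)(z)|+|z\cdot(D^2\phi)(z)|\le C(1+|z|)^{-(n+2)}$ together with $(1+|x|/t)^{-(n+2)}=t^{n+2}(t+|x|)^{-(n+2)}$ give the estimate. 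One then differentiates \eqref{Kernel} term by term — interchanging $\nabla_y$ with the finite sum and with $\int_{a_i}^{a_{i+1}}dt$, which is legitimate since $\nabla_y\frac{\partial}{\partial t}\phi_t(y)$ is continuous and dominated uniformly on compact $t$-intervals away from $y=0$ — to get $\nabla_y K_{\boldsymbol{a,v};N}(y)=\sum_{i=N_1}^{N_2}v_i\int_{a_i}^{a_{i+1}}\frac{\partial}{\partial t}\bigl(\nabla_y\phi_t(y)\bigr)\,dt$, and repeats the telescoping bound above with exponent $n+2$, using $\int_0^\infty(t+|y|)^{-(n+2)}\,dt=(n+1)^{-1}|y|^{-(n+1)}$.

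The argument has no genuinely hard step: the only points requiring care are the verification of the auxiliary gradient estimate and the justification of moving $\nabla_y$ through the $t$-integral, both of which are routine because $\phi$ is Schwartz. The conceptual content — and the reason the constant does not depend on $N$ (nor, in fact, on $\delta$) — is that the telescoping converts the sum over $i$ into a single integral over $(0,\infty)$ of $\frac{\partial}{\partial t}\phi_t$, for which \eqref{one partial} and its gradient analogue deliver exactly the Calder\'on--Zygmund size and smoothness decay.
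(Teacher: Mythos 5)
Your proof is correct and follows essentially the same route as the paper: telescope the sum of differences into a single integral of $\left|\frac{\partial}{\partial t}\phi_t(y)\right|$ over $(0,\infty)$, bound that integrand pointwise, and for the gradient repeat the argument using that $\nabla\phi$ is again Schwartz. The only (harmless) difference is that you invoke \eqref{one partial} and its gradient analogue to evaluate $\int_0^\infty(t+|y|)^{-(n+1)}\,dt$ directly, whereas the paper re-derives the same bound inline by splitting the $t$-integral at $t=|y|$ and using the Schwartz decay of $\phi$ and $\nabla\phi$.
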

\begin{proof}
For (i), by the fact that $\boldsymbol{v}$ is bounded, as well as Newton-Leibniz formula, we obtain
\begin{align*}
	\left| K_{\boldsymbol{a,v};N}(y)\right| &\leq \sum\limits_{i=N_{1}}^{N_{2}}\left| v_{i}\right| \left| \phi_{a_{i+1}}(y)-\phi_{a_{i}}(y)\right| \\
	&\leq C \sum\limits_{i=-\infty}^{\infty}\left|\int_{a_{i}}^{a_{i+1}}\frac{\partial}{\partial t}\left(\phi_{t}(y) \right) dt\right|\\
	&\leq C \int_{0}^{\infty} \left|\frac{n\phi(\frac{y}{t})}{t^{n+1}}+
	\frac{y\cdot\nabla\phi\left( \frac{y}{t}\right)  }{t^{n+2}} \right|dt\\
	&\leq  C\left( \int_{0}^{ \left|y\right| }\left|\frac{n\phi(\frac{y}{t})}{t^{n+\frac1{2}}t^{\frac1{2}}}+\frac{y\cdot\nabla\phi\left( \frac{y}{t}\right)}{t^{n+\frac{3}{2}}t^{\frac1{2}}}\right|dt+\int_{\left|y\right|}^{ \infty }\left|\frac{n\phi(\frac{y}{t})}{t^{n-\frac1{2}}t^{\frac{3}{2}}}+\frac{y\cdot\nabla\phi\left( \frac{y}{t}\right)}{t^{n+\frac1{2}}t^{\frac{3}{2}}}\right|dt \right)\\
	&\leq \int_{0}^{ \left|y\right| }\frac{C}{t^{\frac1{2}}\left| y\right|^{n+\frac1{2}} }dt+ \int_{ \left|y\right|}^{\infty }\frac{C}{t^{\frac{3}{2}}\left| y\right|^{n-\frac1{2}} }dt\leq \frac{C}{\left|y \right|^{n} }.
\end{align*}
For (ii), according to \eqref{S space}, we have $\frac{\partial\phi}{\partial y_{i}}\in\mathcal{S}(\rn), i=1,2,\cdots, n. $ Then by a similar discussion to (i), we deduce
\begin{align*}
\left| \nabla K_{\boldsymbol{a,v};N}(y)\right|&\leq \sum\limits_{i=1}^{n}\left|\frac{\partial}{\partial y_{i}} K_{\boldsymbol{a,v};N}(y)\right|\\
&\leq C\sum\limits_{i=1}^{n}
 \int_{0}^{\infty}\left|\frac{\partial}{\partial t}\frac{\partial}{\partial y_{i}} \phi_{t}(y)\right|dt\\
 &= C\sum\limits_{i=1}^{n}\int_{0}^{\infty}\left|\frac{\partial}{\partial t}\left( \frac{\frac{\partial\phi}{\partial y_{i}}(\frac{y}{t})}{t^{n+1}}\right)  \right| dt\leq \frac{C}{\left|y \right|^{n+1} }.
\end{align*}
\end{proof}

\section{Boundedness of the oscillation operators and variation operators}\label{sec3}
In this section, we will establish the boundedness of $ \mathcal{O}_{\{t_{i}\}_{i\in\mathbb{N}}}\left(\{\Phi_{t}\}_{t>0}, \cdot \right)$ and $\mathcal{V}_{\rho}(\{\Phi_{t}\}_{t>0},\cdot)$ from $\bmoz$ to $\bloz$.
\begin{proof}[\bf Proof of Theorem \ref{main O}]
For $f\in \bmoz$, we first prove that,  $\mathcal{O}_{\{t_{i}\}_{i\in\mathbb{N}}}\left(\{\Phi_{t}\}_{t>0}, f \right)(x)<\infty,a.e.\ x\in\rn.$ Since $\{t_{i}\}_{i\in\mathbb{N}}\subset (0,\infty)$ is a fixed sequence decreasing to zero, there exists $r>0$ such that $t_{1}<r$. Now we take a ball $B:=B(x_0, r)\subset \rn$ with $x_{0}\in \rn$. We claim that, there exists $C>0$  depending on $n$ and $\phi$ such that
\begin{equation}\label{O a.e}
G:= \frac1{|B|}\int_{B}\mathcal{O}_{\{t_{i}\}_{i\in\mathbb{N}}}\left(\{\Phi_{t}\}_{t>0}, f \right)(x)dx \leq C \|f\|_{\bmoz}.
\end{equation}

By writing
$$f=(f-f_{B})\chi_{2B}+(f-f_{B})\chi_{(2B)^{c}}+f_{B}=:f_{1}+f_{2}+f_{3},$$
then we get
$$	G\leq \sum\limits_{j=1}^{3}\frac1{|B|}\int_{B}\mathcal{O}_{\{t_{i}\}_{i\in\mathbb{N}}}\left(\{\Phi_{t}\}_{t>0}, f_{j} \right)(x) dx=: G_{1}+G_{2}+G_{3}.$$

Noting that $\int_{\rn}\phi_{t}(x)dx=1$, it leads to that for any $t>0, x\in \rn$, $\Phi_{t}(f_{3})(x)=f_{B}.$ Furthermore, we get $G_{3}=0$.

For $G_{1}$, by  H\"older's inequality, Lemma \ref{BMO pro} (i), (ii) and $L^{2}$ boundedness of the oscillation operators, we get
\begin{align}
	G_{1}&\leq \left\lbrace\frac1{|B|}\int_{B}\lf [\mathcal{O}_{\{t_{i}\}_{i\in\mathbb{N}}}\left(\{\Phi_{t}\}_{t>0}, f_{1} \right)(x) \r ]^{2}dx \right\rbrace   ^{\frac1{2}}\notag\\
	&\leq C \left[  \frac1{|B|}\int_{2B}\abs{f(x)-f_{B}}^{2}dx\right] ^{\frac{1}{2}}\notag\\
	&\leq C \left[  \frac1{|2B|}\int_{2B}\abs{f(x)-f_{2B}}^{2}dx\right] ^{\frac{1}{2}}+C \left| f_{B}-f_{2B}\right|\notag \\
	&\leq C\|f\|_{\bmoz}.\label{G1}
\end{align}

For $G_{2}$, by the definition of oscillation, Newton-Leibniz formula, Fubini's theorem and \eqref{two outer partial} for any $x\in B$,
\begin{align*}
	 \mathcal{O}_{\{t_{i}\}_{i\in\mathbb{N}}}\left(\{\Phi_{t}\}_{t>0}, f_{2} \right)(x)&\leq \sum\limits_{i=1}^{\infty}\sup\limits_{t_{i+1}\leq \varepsilon_{i+1}<\varepsilon_{i}\leq t_{i} }\left| \int_{\varepsilon_{i+1}}^{\varepsilon_{i}}\frac{\partial}{\partial t}\Phi_{t}f_{2}(x)dt\right|\\
	&\leq \int_{\rn}\left| f_{2}(y)\right|\int_{0}^{r}\left|\frac{\partial}{\partial t} \phi_{t}(x-y)\right|dtdy\\
	&\leq C\int_{(2B)^{c}}\frac{\left|f(y)-f_{B} \right|r}{\left| y-x_{0}\right|^{n+1}}dy.
\end{align*}
Then by Lemma \ref{BMO pro} (iii), we have
\begin{align}
	G_{2}&\leq C\int_{(2B)^{c}}\frac{\left|f(y)-f_{B} \right|r}{\left| y-x_{0}\right|^{n+1}}dy\notag\\
	&\leq C\sum\limits_{k=1}^{\infty}\int_{2^{k+1}B\setminus 2^{k}B }\left|f(y)-f_{B} \right|\frac{r}{(2^kr)^{n+1}}dy\notag\\
	&\leq C \sum\limits_{k=1}^{\infty}\frac1{2^{k}}\frac1{\left|2^{k+1}B \right| }\int_{2^{k+1}B}\left| f(y)-f_{B}\right|dy\notag\\
	&\leq C\sum\limits_{k=1}^{\infty}\frac{k+1}{2^{k}}\|f\|_{\bmoz}\leq C \|f\|_{\bmoz}.\label{G2}
\end{align}

Summing the estimates of $G_{i}, i=1, 2, 3$, \eqref{O a.e} holds true. Then we obtain $$\mathcal{O}_{\{t_{i}\}_{i\in\mathbb{N}}}\left(\{\Phi_{t}\}_{t>0}, f \right)(x)<\infty,a.e.\ x\in B(x_0, r).$$
Let $r$ tend to infinity, we get
\begin{equation}\label{Oae}
	 \mathcal{O}_{\{t_{i}\}_{i\in\mathbb{N}}}\left(\{\Phi_{t}\}_{t>0}, f \right)(x)<\infty,a.e.\ x\in \rn.
\end{equation}

Based on the discussion above, in order to complete the proof, we only need to show that there exists $C>0$ depending on $n$ and $\phi$ such that for any ball $B:=B(x_{0},r)\subset \rn$,
\begin{equation}\label{main result O}
\frac1{|B|}\int_{B}\lf [ \mathcal{O}_{\{t_{i}\}_{i\in\mathbb{N}}}\left(\{\Phi_{t}\}_{t>0}, f \right)(x)-\mathop{\einf}\limits_{y\in B} \mathcal{O}_{\{t_{i}\}_{i\in\mathbb{N}}}\left(\{\Phi_{t}\}_{t>0},f \right)(y)\r ]dx\leq C \|f\|_{\bmoz}.
\end{equation}
Since $\{t_{i}\}_{i\in\mathbb{N}}\subset (0,\infty)$ is a fixed sequence decreasing to zero, we prove \eqref{main result O} by discussing the following two cases.

  Case 1: any $i\in \mathbb{N}$, $t_{i}<r$. In this case, from \eqref{G1} and \eqref{G2}, it is obvious that
   \begin{align*}
  	&\frac1{|B|}\int_{B}\lf [ \mathcal{O}_{\{t_{i}\}_{i\in\mathbb{N}}}\left(\{\Phi_{t}\}_{t>0}, f \right)(x)-\mathop{\einf}\limits_{y\in B} \mathcal{O}_{\{t_{i}\}_{i\in\mathbb{N}}}\left(\{\Phi_{t}\}_{t>0},f \right)(y)\r ]dx\\
  	&\quad \leq \frac1{|B|}\int_{B}\lf [\mathcal{O}_{\{t_{i}\}_{i\in\mathbb{N}}}\left(\{\Phi_{t}\}_{t>0}, f \right)(x) \r ]dx\leq C \|f\|_{\bmoz}.
  \end{align*}

Case 2: there exists an integer $i_{0}$ such that
$t_{i_{0}}<r\leq t_{i_{0}-1}$. Without loss of generality, we may further assume $i_{0}>3$.
In order to estimate
$$F:=\int_{B}\left[  \mathcal{O}_{\{t_{i}\}_{i\in\mathbb{N}}}\left(\{\Phi_{t}\}_{t>0}, f \right)(x)-\mathop{\einf}\limits_{y\in B} \mathcal{O}_{\{t_{i}\}_{i\in\mathbb{N}}}\left(\{\Phi_{t}\}_{t>0},f \right)(y)\right] dx,$$
we consider the following three subsets of $B$.
\begin{align*}
B_{1}:&= \left\{y\in B:\mathop{\sup}\limits_{t_{i_{0}}\leq \ep_{i_{0}}< \ep_{i_{0}-1}\leq t_{i_{0}-1} } \left|\Phi_{\ep_{i_{0}-1}}f(y)-\Phi_{\ep_{i_{0}}}f(y) \right|\right.
\\
&\left.\quad\quad\quad\quad\quad\quad\quad\quad=\mathop{\sup}\limits_{t_{i_{0}}\leq \ep_{i_{0}}< \ep_{i_{0}-1}\leq r } \left|\Phi_{\ep_{i_{0}-1}}f(y)-\Phi_{\ep_{i_{0}}}f(y) \right|\right\},	
\end{align*}
\begin{align*}
	B_{2}:&= \left\{y\in B:\mathop{\sup}\limits_{t_{i_{0}}\leq \ep_{i_{0}}< \ep_{i_{0}-1}\leq t_{i_{0}-1} } \left|\Phi_{\ep_{i_{0}-1}}f(y)-\Phi_{\ep_{i_{0}}}f(y) \right|\right.
	\\
	 &\left.\quad\quad\quad\quad\quad\quad\quad\quad=\mathop{\sup}\limits_{r<\ep_{i_{0}}< \ep_{i_{0}-1}\leq t_{i_{0}-1} } \left|\Phi_{\ep_{i_{0}-1}}f(y)-\Phi_{\ep_{i_{0}}}f(y) \right|\right\},	
\end{align*}
and
\begin{align*}
	B_{3}:&= \left\{y\in B:\mathop{\sup}\limits_{t_{i_{0}}\leq \ep_{i_{0}}< \ep_{i_{0}-1}\leq t_{i_{0}-1} } \left|\Phi_{\ep_{i_{0}-1}}f(y)-\Phi_{\ep_{i_{0}}}f(y) \right|\right.
	\\
	 &\left.\quad\quad\quad\quad\quad\quad\quad\quad=\mathop{\sup}\limits_{t_{i_{0}}\leq \ep_{i_{0}}\leq r< \ep_{i_{0}-1}\leq t_{i_{0}-1}} \left|\Phi_{\ep_{i_{0}-1}}f(y)-\Phi_{\ep_{i_{0}}}f(y) \right|\right\}.
\end{align*}
Then, we can write
\begin{align*}
F&=\sum\limits_{i=1}^{3}\int_{B_{i}}\left[  \mathcal{O}_{\{t_{i}\}_{i\in\mathbb{N}}}\left(\{\Phi_{t}\}_{t>0}, f \right)(x)-\mathop{\einf}\limits_{y\in B} \mathcal{O}_{\{t_{i}\}_{i\in\mathbb{N}}}\left(\{\Phi_{t}\}_{t>0},f \right)(y)\right] dx\\
&=:F_{1}+F_{2}+F_{3}.
\end{align*}

For $F_{1}$, we have,  for any $y\in B$,
$$\mathcal{O}_{\{t_{i}\}_{i\in\mathbb{N}}}\left(\{\Phi_{t}\}_{t>0},f \right)(y)\geq
\left ( \sum\limits_{i=1}^{i_{0}-2}\sup\limits_{t_{i+1}\leq \varepsilon_{i+1}<\varepsilon_{i}\leq t_{i}}\left| \Phi_{\ep_{i}}f(y)-\Phi_{\ep_{i+1}}f(y)\right|^{2}
\right )^{\frac{1}{2}},$$
and for any $x\in B_{1}$,
\begin{align*}
\mathcal{O}_{\{t_{i}\}_{i\in\mathbb{N}}}\left(\{\Phi_{t}\}_{t>0},f \right)(x)&\leq\left ( \sum\limits_{i=1}^{i_{0}-2}\sup\limits_{t_{i+1}\leq \varepsilon_{i+1}<\varepsilon_{i}\leq t_{i}}\left| \Phi_{\ep_{i}}f(x)-\Phi_{\ep_{i+1}}f(x)\right|^{2}\right )^{\frac{1}{2}}\\
&\quad+\left ( \sum\limits_{i=i_{0}-1}^{\infty}\sup\limits_{\substack{t_{i+1}\leq \varepsilon_{i+1}<\varepsilon_{i}\leq t_{i}\\ \ep_{i}\leq  r}}\left| \Phi_{\ep_{i}}f(x)-\Phi_{\ep_{i+1}}f(x)\right|^{2}\right )^{\frac{1}{2}}.
\end {align*}
Then, we get
\begin{align*}
F_{1}&\leq\int_{B_{1}}\left ( \sum\limits_{i=i_{0}-1}^{\infty}\sup\limits_{\substack{t_{i+1}\leq \varepsilon_{i+1}<\varepsilon_{i}\leq t_{i}\\ \ep_{i}\leq  r}}\left| \Phi_{\ep_{i}}f(x)-\Phi_{\ep_{i+1}}f(x)\right|^{2}\right )^{\frac{1}{2}}dx\\
&\quad +\left|B_{1} \right|\Bigg\{\mathop{\esup}\limits_{x,y\in B}\sum\limits_{i=1}^{i_{0}-2}\sup\limits_{t_{i+1}\leq \varepsilon_{i+1}<\varepsilon_{i}\leq t_{i} }\left| \left[ \Phi_{\ep_{i}}f(x)-\Phi_{\ep_{i+1}}f(x)\right]\right. \\
&\left.\quad\quad\quad\quad\quad-\left[  \Phi_{\ep_{i}}f(y)-\Phi_{\ep_{i+1}}f(y)\right]\right| ^{2}
\Bigg\}^{\frac1{2}}.
\end {align*}

Similarly, for $F_{2}$, we have,  for any $y\in B$,
$$\mathcal{O}_{\{t_{i}\}_{i\in\mathbb{N}}}\left(\{\Phi_{t}\}_{t>0},f \right)(y)\geq
\left ( \sum\limits_{i=1}^{i_{0}-1}\sup\limits_{t_{i+1}\leq \varepsilon_{i+1}<\varepsilon_{i}\leq t_{i}}\left| \Phi_{\ep_{i}}f(y)-\Phi_{\ep_{i+1}}f(y)\right|^{2}
\right )^{\frac{1}{2}}.$$
And for any $x\in B_{2}$,
\begin{align*}
	 \mathcal{O}_{\{t_{i}\}_{i\in\mathbb{N}}}\left(\{\Phi_{t}\}_{t>0},f \right)(x)
	&\leq\left ( \sum\limits_{i=1}^{i_{0}-1}\sup\limits_{\substack{t_{i+1}\leq \varepsilon_{i+1}<\varepsilon_{i}\leq t_{i}\\ \ep_{i+1}> r}}\left| \Phi_{\ep_{i}}f(x)-\Phi_{\ep_{i+1}}f(x)\right|^{2}\right )^{\frac{1}{2}}\\
	&\quad+\left ( \sum\limits_{i=i_{0}}^{\infty}\sup\limits_{t_{i+1}\leq \varepsilon_{i+1}<\varepsilon_{i}\leq t_{i}}\left| \Phi_{\ep_{i}}f(x)-\Phi_{\ep_{i+1}}f(x)\right|^{2}\right )^{\frac{1}{2}}.
\end {align*}
Then we obtain
\begin{align*}
	F_{2}&\leq\int_{B_{2}}\left ( \sum\limits_{i=i_{0}}^{\infty}\sup\limits_{t_{i+1}\leq \varepsilon_{i+1}<\varepsilon_{i}\leq t_{i}}\left| \Phi_{\ep_{i}}f(x)-\Phi_{\ep_{i+1}}f(x)\right|^{2}\right )^{\frac{1}{2}}dx\\
	&\quad +\left|B_{2} \right|\Bigg\{\mathop{\esup}\limits_{x,y\in B}\sum\limits_{i=1}^{i_{0}-1}\sup\limits_{\substack{t_{i+1}\leq \varepsilon_{i+1}<\varepsilon_{i}\leq t_{i}\\ \ep_{i+1}\geq r}}\left| \left[ \Phi_{\ep_{i}}f(x)-\Phi_{\ep_{i+1}}f(x)\right]\right. \\
	&\left.\quad\quad\quad\quad\quad-\left[  \Phi_{\ep_{i}}f(y)-\Phi_{\ep_{i+1}}f(y)\right]\right| ^{2}
	\Bigg\}^{\frac1{2}}.
	\end {align*}
	
For $F_{3}$, we have,  for any $y\in B$,
\begin{align*}
\mathcal{O}_{\{t_{i}\}_{i\in\mathbb{N}}}\left(\{\Phi_{t}\}_{t>0},f \right)(y)&\geq
\Bigg( \sum\limits_{i=1}^{i_{0}-2}\sup\limits_{t_{i+1}\leq \varepsilon_{i+1}<\varepsilon_{i}\leq t_{i}}\left| \Phi_{\ep_{i}}f(y)-\Phi_{\ep_{i+1}}f(y)\right|^{2}\\
& \quad\quad + \mathop{\sup}\limits_{t_{i_{0}}\leq \ep_{i_{0}}\leq r< \ep_{i_{0}-1}\leq t_{i_{0}-1} } \left|\Phi_{\ep_{i_{0}-1}}f(y)-\Phi_{\ep_{i_{0}}}f(y) \right|^{2}\Bigg)^{\frac{1}{2}}\\
&\geq
\Bigg( \sum\limits_{i=1}^{i_{0}-2}\sup\limits_{t_{i+1}\leq \varepsilon_{i+1}<\varepsilon_{i}\leq t_{i}}\left| \Phi_{\ep_{i}}f(y)-\Phi_{\ep_{i+1}}f(y)\right|^{2}\\
& \quad\quad + \mathop{\sup}\limits_{r< \ep_{i_{0}-1}\leq t_{i_{0}-1} } \left|\Phi_{\ep_{i_{0}-1}}f(y)-\Phi_{r}f(y) \right|^{2}\Bigg)^{\frac{1}{2}}.
\end {align*}
And for any $x\in B_{3}$,
\begin{align*}
&\mathcal{O}_{\{t_{i}\}_{i\in\mathbb{N}}}\left(\{\Phi_{t}\}_{t>0},f \right)(x)\\
&\quad \leq\Bigg( \sum\limits_{i=i_{0}}^{\infty}\sup\limits_{t_{i+1}\leq \varepsilon_{i+1}<\varepsilon_{i}\leq t_{i} }\left| \Phi_{\ep_{i}}f(x)-\Phi_{\ep_{i+1}}f(x)\right|^{2}\\
&\quad\quad\quad + \mathop{\sup}\limits_{t_{i_{0}}\leq \ep_{i_{0}}\leq r } \left|\Phi_{\ep_{i_{0}}}f(x)-\Phi_{r}f(x) \right|^{2}\Bigg)^{\frac{1}{2}}\\
&\quad\quad\quad  +
\Bigg( \sum\limits_{i=1}^{i_{0}-2}\sup\limits_{t_{i+1}\leq \varepsilon_{i+1}<\varepsilon_{i}\leq t_{i}}\left| \Phi_{\ep_{i}}f(x)-\Phi_{\ep_{i+1}}f(x)\right|^{2}\\
& \quad\quad\quad\quad\quad + \mathop{\sup}\limits_{r< \ep_{i_{0}-1}\leq t_{i_{0}-1} } \left|\Phi_{\ep_{i_{0}-1}}f(x)-\Phi_{r}f(x) \right|^{2}\Bigg)^{\frac{1}{2}}\\
&\quad \leq\Bigg( \sum\limits_{i=i_{0}-1}^{\infty}\sup\limits_{\substack{t_{i+1}\leq \varepsilon_{i+1}<\varepsilon_{i}\leq t_{i}\\ \ep_{i}\leq r}}\left| \Phi_{\ep_{i}}f(x)-\Phi_{\ep_{i+1}}f(x)\right|^{2}\Bigg)^{\frac{1}{2}}\\
&\quad\quad\quad + \Bigg(\mathop{\sup}\limits_{r<\ep_{i_{0}-1}\leq t_{i_{0}-1} } \left|\Phi_{\ep_{i_{0}-1}}f(x)-\Phi_{r}f(x) \right|^{2}\\
&\quad\quad\quad\quad\quad  + \sum\limits_{i=1}^{i_{0}-2}\sup\limits_{t_{i+1}\leq \varepsilon_{i+1}<\varepsilon_{i}\leq t_{i}}\left| \Phi_{\ep_{i}}f(x)-\Phi_{\ep_{i+1}}f(x)\right|^{2}\Bigg)^{\frac{1}{2}}.
\end {align*}
Furthermore, we obtain
\begin{align*}
F_{3}&\leq\int_{B_{3}}\left(     \sum\limits_{i=i_{0}-1}^{\infty}\sup\limits_{\substack{t_{i+1}\leq \varepsilon_{i+1}<\varepsilon_{i}\leq t_{i}\\ \ep_{i}\leq r}}\left| \Phi_{\ep_{i}}f(x)-\Phi_{\ep_{i+1}}f(x)\right|^{2}        \right)^{\frac{1}{2}}dx\\
&\quad +\left| B_{3}\right| \mathop{\esup}\limits_{x,y\in B}\Bigg\{ \sum\limits_{i=1}^{i_{0}-2}\sup\limits_{t_{i+1}\leq \varepsilon_{i+1}<\varepsilon_{i}\leq t_{i}}\Big|\left[ \Phi_{\ep_{i}}f(x)-\Phi_{\ep_{i+1}}f(x)\right]\\
&\quad\quad\quad\quad\quad\quad\quad\quad-\left[  \Phi_{\ep_{i}}f(y)-\Phi_{\ep_{i+1}}f(y)\right]\Big| ^{2}
\\
&\quad\quad\quad\quad\quad\quad\quad\quad\quad+\mathop{\sup}\limits_{r\leq \ep_{i_{0}}< \ep_{i_{0}-1}\leq t_{i_{0}-1} } \left|\left[ \Phi_{\ep_{i_{0}-1}}f(x)-\Phi_{\ep_{i_{0}}}f(x)\right] \right.\\
&\quad\quad\quad\quad\quad\quad\quad\quad\quad\quad\left.-\left[ \Phi_{\ep_{i_{0}-1}}f(y)-\Phi_{\ep_{i_{0}}}f(y)\right]  \right|^{2}\Bigg\}^{\frac1{2}}\\
&\leq\int_{B_{3}}\bigg(     \sum\limits_{i=i_{0}-1}^{\infty}\sup\limits_{\substack{t_{i+1}\leq \varepsilon_{i+1}<\varepsilon_{i}\leq t_{i}\\ \ep_{i}\leq r}}\left| \Phi_{\ep_{i}}f(x)-\Phi_{\ep_{i+1}}f(x)\right|^{2}        \bigg)^{\frac{1}{2}}dx\\
&\quad +\left| B_{3}\right| \mathop{\esup}\limits_{x,y\in B}\Bigg\{ \sum\limits_{i=1}^{i_{0}-1}\sup\limits_{\substack{t_{i+1}\leq \varepsilon_{i+1}<\varepsilon_{i}\leq t_{i}\\ \ep_{i+1}\geq r}}\Big|\left[ \Phi_{\ep_{i}}f(x)-\Phi_{\ep_{i+1}}f(x)\right]\\
&\quad\quad\quad\quad\quad\quad\quad\quad-\left[ \Phi_{\ep_{i}}f(y)-\Phi_{\ep_{i+1}}f(y)\right]\Big|^{2}\Bigg\}^{\frac1{2}}.
\end {align*}
\par Based on the above arguments, we obtain
\begin{align*}
&\left\| \mathcal{O}_{\{t_{i}\}_{i\in\mathbb{N}}}\left(\{\Phi_{t}\}_{t>0}, f \right)\right\| _{\bloz} \leq \frac{F_{1}+F_{2}+F_{3}}{\left|B\right|}\\
&\quad \leq \frac1{\left|B\right|}\int_{B}\left( \sum\limits_{i=i_{0}-1}^{\infty}\sup\limits_{\substack{t_{i+1}\leq \varepsilon_{i+1}<\varepsilon_{i}\leq t_{i}\\ \ep_{i}\leq r}}\left| \Phi_{\ep_{i}}f(x)-\Phi_{\ep_{i+1}}f(x)\right|^{2}\right)^{\frac{1}{2}}dx\\
&\quad\quad +\mathop{\esup}\limits_{x,y\in B}\Bigg\{ \sum\limits_{i=1}^{i_{0}-1}\sup\limits_{\substack{t_{i+1}\leq \varepsilon_{i+1}<\varepsilon_{i}\leq t_{i}\\ \ep_{i+1}\geq r}}\Big|\left[ \Phi_{\ep_{i}}f(x)-\Phi_{\ep_{i+1}}f(x)\right]\\
&\quad\quad\quad\quad\quad\quad\quad-\left[ \Phi_{\ep_{i}}f(y)-\Phi_{\ep_{i+1}}f(y)\right]\Big|^{2}\Bigg\}^{\frac1{2}}\\
&\quad=:E_{1}+E_{2}.
\end {align*}

Similar to the estimates of $G_{1}$ and $G_{2}$ in Case 1, it follows that $$E_{1}\leq C\|f\|_{\bmoz}.$$ Then we only need to estimate $E_{2}$. Since $\int_{\rn}\phi_{t}(x)dx=1$, then by Newton-Leibniz formula,
\begin{align*}
& \Big|  \left[ \Phi_{\ep_{i}}f(x)-\Phi_{\ep_{i+1}}f(x)\right]-\left[ \Phi_{\ep_{i}}f(y)-\Phi_{\ep_{i+1}}f(y)\right]  \Big| \\
&\quad = \left|\int_{\ep_{i+1}}^{\ep_{i}} \frac{\partial}{\partial t} \left[ \Phi_{t}(f-f_{B})(x)-\Phi_{t}(f-f_{B})(y)\right]dt\right| .
\end {align*}
Futhermore, we obtain
\begin{align*}
E_{2}&\leq \mathop{\esup}\limits_{x,y\in B}\Bigg\{ \sum\limits_{i=1}^{i_{0}-1}\sup\limits_{\substack{t_{i+1}\leq \varepsilon_{i+1}<\varepsilon_{i}\leq t_{i}\\ \ep_{i+1}\geq r} }\Big|\left[ \Phi_{\ep_{i}}f(x)-\Phi_{\ep_{i+1}}f(x)\right]-\left[ \Phi_{\ep_{i}}f(y)-\Phi_{\ep_{i+1}}f(y)\right]\Big|\Bigg\}\\
&\leq \mathop{\esup}\limits_{x,y\in B} \int_{r}^{\infty} \left| \frac{\partial}{\partial t} \left[ \Phi_{t}(f-f_{B})(x)-\Phi_{t}(f-f_{B})(y)\right]  \right| dt\\
& \leq \mathop{\esup}\limits_{x,y\in B}\int_{r}^{\infty}\left[ \int_{2B}+\int_{(2B)^c}\right] \left|f(z)-f_{B} \right|\left|\frac{\partial}{\partial t}\left( \phi_{t}(x-z)-\phi_{t}(y-z)\right) \right|dzdt\\
&=: E_{21}+E_{22}.
\end {align*}

From Lemma \ref{BMO pro} (i) and \eqref{three inner partial},  we deduce
\begin{align*}
E_{21}&\leq C \mathop{\esup}\limits_{x,y\in B}\int_{r}^{\infty}\int_{2B}\left|f(z)-f_{B} \right|\frac{\left|x-y \right| }{t^{n+2}}dzdt\\
&\leq \frac{C}{r^{n}}\int_{2B}\left|f(z)-f_{B} \right|dz\leq C\|f\| _{\bmoz}.
\end {align*}

 From Lemma \ref{BMO pro} (iii) and \eqref{three outer partial}, we can obtain
\begin{align*}
E_{22}&\leq C\mathop{\esup}\limits_{x,y\in B}\int_{r}^{\infty}\int_{(2B)^c}\left|f(z)-f_{B}\right|\frac{\left|x-y \right| }{t^{\frac{3}{2}}\left|z-x_{0} \right|^{n+\frac1{2}}}dzdt\\
&\leq C \int_{r}^{\infty}\frac{r}{t^{\frac{3}{2}}}dt\int_{(2B)^c}\left|f(z)-f_{B}\right|\frac{1}{\left|z-x_{0} \right|^{n+\frac1{2}}}dz\\
&\leq C \sqrt{r}\sum\limits_{k=1}^{\infty}\int_{2^{k+1}B\setminus 2^{k}B }\left|f(z)-f_{B} \right|\frac{1}{\left(2^{k}r\right) ^{n+\frac1{2}}}dz\\
&\leq C \sum\limits_{k=1}^{\infty}\int_{2^{k+1}B}\left|f(z)-f_{B} \right|\frac1{\left(2^{k+1} r\right)^{n} \left(\sqrt{2} \right)^{k} }dz\\
&\leq C \sum\limits_{k=1}^{\infty}\frac1{\left(\sqrt{2} \right)^{k}}\frac1{\left|2^{k+1}B \right|  }\int_{2^{k+1}B}\left|f(z)-f_{B} \right|dz\\
&\leq C \sum\limits_{k=1}^{\infty}\frac{k+1}{\left(\sqrt{2} \right)^{k}}\|f\|_{\bmoz}\leq C\|f\|_{\bmoz}.
\end {align*}

Thus, we have $E_{2}\leq C\|f\|_{\bmoz}.$ This along with the estimate of $E_{1}$ yields \eqref{main result O}. Accordingly, we finish the proof of Theorem \ref{main O}.
\end{proof}

\begin{proof}[\bf Proof of Theorem \ref{main V}]
We first claim that if there exists a point $x_{0}\in\rn$ such that
\begin{equation}\label{Vae}
	\mathcal{V}_{\rho}(\{\Phi_{t}\}_{t>0},f)(x_0)<\infty,
\end{equation}
then $\mathcal{V}_{\rho}(\{\Phi_{t}\}_{t>0},f)$ is finite almost everywhere. As a matter of fact, in this case, we will show there  exists $C>0$  depending on $n$ and $\phi$ such that for any ball $B:=B(x_0, r)$ with $r>0$,
	\begin{equation}\label{V a.e}
		E:=\frac1{|B|}\int_{B}\lf [ \mathcal{V}_{\rho}(\{\Phi_{t}\}_{t>0},f)(x)-\mathop{\inf}\limits_{y\in B} \mathcal{V}_{\rho}(\{\Phi_{t}\}_{t>0},f)(y)\r ]dx\leq C \|f\|_{\bmoz}.
	\end{equation}
	
For $r>0$, we set
\begin{align*}
	&M_{r}:=\{\{\ep_{i}\}_{i=1}^{\infty}: \{\ep_{i}\}_{i=1}^{\infty} \ \text{is a sequence of positive number decreasing to zero with} \ \ep_{1}\leq r\},\\
	&N_{r}:=\{\{\eta_{j}\}_{j=1}^{k}: \{\eta_{j}\}_{j=1}^{k} \ \text{is a finite decreasing sequence of with the last item} \ \eta_{k}\geq r\}.
\end{align*}
And let
\begin{align*}
&B_{1}:=\left\lbrace x\in B: \mathcal{V}_{\rho}(\{\Phi_{t}\}_{t>0},f)(x)= \sup\limits_{\{\ep_{i}\}_{i=1}^{\infty}\in M_{r}}\lf ( \sum\limits_{i=1}^{\infty}|{\Phi_{\ep_{i}}f(x)-\Phi_{\ep_{i+1}}f(x)}|^{\rho}\r)^{\frac{1}{\rho}}\right\rbrace,\\
&B_{2}:=B\setminus B_1.
\end{align*}
 Then we derive
 \begin{align*}
 	E&\leq \frac1{|B|}\int_{B_{1}}\sup\limits_{\{\ep_{i}\}_{i=1}^{\infty}\in M_{r}}\lf ( \sum\limits_{i=1}^{\infty}|{\Phi_{\ep_{i}}f(x)-\Phi_{\ep_{i+1}}f(x)}|^{\rho}\r)^{\frac{1}{\rho}}dx\\
 	&\quad +\frac1{|B|}\int_{B_{2}}\lf [ \mathcal{V}_{\rho}(\{\Phi_{t}\}_{t>0},f)(x)-\mathop{\inf}\limits_{y\in B} \mathcal{V}_{\rho}(\{\Phi_{t}\}_{t>0},f)(y)\r ]dx\\
 	&=: E_{1}+E_{2}.
 \end{align*}

Similar to the estimation of $G$ in Theorem \ref{main O}, we obtain
$$E_{1}\leq C \|f\|_{\bmoz}.$$

For $E_{2}$, notice that for any $x\in B_{2}$,
\begin{align*}
	\mathcal{V}_{\rho}(\{\Phi_{t}\}_{t>0},f)(x)\leq& \sup\limits_{\{\ep_{i}\}_{i=1}^{\infty}\in M_{r}}\lf ( \sum\limits_{i=1}^{\infty}|{\Phi_{\ep_{i}}f(x)-\Phi_{\ep_{i+1}}f(x)}|^{\rho}\r)^{\frac{1}{\rho}}\\
	&\quad+\sup\limits_{\{\eta_{j}\}_{j=1}^{k}\in N_{r}}\lf ( \sum\limits_{j=1}^{k-1}|{\Phi_{\eta_{j}}f(x)-\Phi_{\eta_{j+1}}f(x)}|^{\rho}\r)^{\frac{1}{\rho}}.
\end{align*}
And for any $y\in B$,
$$\mathcal{V}_{\rho}(\{\Phi_{t}\}_{t>0},f)(y)\geq \sup\limits_{\{\eta_{j}\}_{j=1}^{k}\in N_{r}}\lf ( \sum\limits_{j=1}^{k-1}|{\Phi_{\eta_{j}}f(y)-\Phi_{\eta_{j+1}}f(y)}|^{\rho}\r)^{\frac{1}{\rho}} .$$
Then we obtain
\begin{align*}
	E_{2}&\leq \frac1{|B|}\int_{B_{2}}\sup\limits_{\{\ep_{i}\}_{i=1}^{\infty}\in M_{r}}\lf ( \sum\limits_{i=1}^{\infty}|{\Phi_{\ep_{i}}f(x)-\Phi_{\ep_{i+1}}f(x)}|^{\rho}\r)^{\frac{1}{\rho}}dx\\
	&\quad +\sup\limits_{x, y\in B}\sup \limits_{\{\ep_{j}\}_{j=1}^{k}}\left[ \lf ( \sum\limits_{j=1}^{k-1}|{\Phi_{\ep_{j}}f(x)-\Phi_{\ep_{j+1}}f(x)}|^{\rho}\r)^{\frac{1}{\rho}}\right.\\
	&\left. \quad\quad \quad\quad \quad\quad \quad\quad -\lf ( \sum\limits_{j=1}^{k-1}|{\Phi_{\ep_{j}}f(y)-\Phi_{\ep_{j+1}}f(y)}|^{\rho}\r)^{\frac{1}{\rho}}\right]\\
	&\leq  \frac1{|B|}\int_{B}\sup\limits_{\{\ep_{i}\}_{i=1}^{\infty}\in M_{r}} \lf ( \sum\limits_{i=1}^{\infty}|{\Phi_{\ep_{i}}f(x)-\Phi_{\ep_{i+1}}f(x)}|^{\rho}\r)^{\frac{1}{\rho}}dx\\
	&\quad +\sup\limits_{x,y\in B}\sup \limits_{\{\ep_{j}\}_{j=1}^{k}} \Bigg\{ \sum\limits_{j=1}^{k-1}\Big|\left[ \Phi_{\ep_{j}}f(x)-\Phi_{\ep_{j+1}}f(x)\right]\\
	&\quad\quad\quad\quad\quad\quad\quad\quad\ -\left[ \Phi_{\ep_{j}}f(y)-\Phi_{\ep_{j+1}}f(y)\right]\Big|\Bigg\}\\
	&=:E_{21}+E_{22}.
\end{align*}

Similar to the estimation of $E_1$ above, we obtain
$$E_{21}\leq C \|f\|_{\bmoz}.$$

By proceeding as in the  estimations of $E_{2}$ in the proof of Theorem \ref{main O}, we can see that
$$E_{22}\leq C \|f\|_{\bmoz}.$$

 Thus, we have $E_{2}\leq C \|f\|_{\bmoz}.$ This along with the estimate of $E_{1}$ yields \eqref{V a.e}. Furthermore, by \eqref{Vae}, we can see that $ \mathop{\inf}_{y\in B} \mathcal{V}_{\rho}(\{\Phi_{t}\}_{t>0},f)(y) <\infty.$ Then \eqref{V a.e} implies  $$\mathcal{V}_{\rho}(\{\Phi_{t}\}_{t>0},f)(x)<\infty,a.e.\ x\in B(x_0, r).$$
 By the arbitrariness of $r$, we get
 $$\mathcal{V}_{\rho}(\{\Phi_{t}\}_{t>0},f)(x)<\infty,a.e.\ x\in \rn.$$

 Now we only need to show that,  in the case that $\mathcal{V}_{\rho}(\{\Phi_{t}\}_{t>0},f)$ is finite almost everywhere, there exists $C>0$ depending on $n$ and $\phi$ such that for any ball $B:=B(x_{0},r)\subset \rn$,
 \begin{equation}\label{main result V}
 	\frac1{|B|}\int_{B}\lf [ \mathcal{V}_{\rho}(\{\Phi_{t}\}_{t>0},f)(x)-\mathop{\einf}\limits_{y\in B} \mathcal{V}_{\rho}(\{\Phi_{t}\}_{t>0},f)(y)\r ]dx\leq C \|f\|_{\bmoz}.
 \end{equation}
 The proof of \eqref{main result V} is similar to \eqref{V a.e}, hence we omit the details, which completes the proof of Theorem \ref{main V}.

\end{proof}
\section{Boundedness of the maximal differential transform}\label{sec4}
In this section, we first prove the $L^{p}$ boundedness of the maximal differential transform $\mathcal{S}_{\boldsymbol{a,v};*}\left(\{\Phi_{t}\}_{t>0}, \cdot\right)$. We obtain this result by following some idea in \cite{CT}. Based on this result, we further establish the boundedness from $\bmoz$ into $\bloz$ of $\mathcal{S}_{\boldsymbol{a,v};*}\left(\{\Phi_{t}\}_{t>0}, \cdot\right)$.
\subsection{$L^{p}$ boundedness of $\mathcal{S}_{\boldsymbol{a,v};*}\left(\{\Phi_{t}\}_{t>0}, \cdot \right)$ }\label{subsec4.1}
Let $\mathcal{S}_{\boldsymbol{a,v};N}\left(\{\Phi_{t}\}_{t>0}, f \right)(x)$ be as in \eqref{partial sum}.
 We first establish the $L^2$ boundedness of it.
\begin{prop}\label{S L2}
	Let $\boldsymbol{a}$ be a $\delta$-lacunary sequence of positive numbers, $\boldsymbol{v}$ be a bounded sequence of complex numbers and $\phi\in \mathcal{S}(\rn)$ with $\int_{\rn}\phi(x)dx=1$. Then there is a constant $C>0$ depending on $n$, $\phi$ and $\left\| \boldsymbol{v}\right\|_{l^{\infty}(\mathbb{Z})} $ (not on $N$) such that
	$$\|\mathcal{S}_{\boldsymbol{a,v};N}\left(\{\Phi_{t}\}_{t>0}, f\right)\|_{L^2(\rn)}\leq C\|f\|_{L^2(\rn)},$$
	for all $f\in L^2(\rn).$
\end{prop}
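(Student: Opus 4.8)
The plan is to prove the $L^2$ bound via the Fourier transform (Plancherel), exploiting the structure of the kernel $K_{\boldsymbol{a,v};N}$ as a telescoping sum of differences $\phi_{a_{i+1}}-\phi_{a_i}$. Since $\mathcal{S}_{\boldsymbol{a,v};N}(\{\Phi_t\}_{t>0},f) = K_{\boldsymbol{a,v};N}\ast f$, by Plancherel's theorem it suffices to show that the multiplier
$$\widehat{K_{\boldsymbol{a,v};N}}(\xi) = \sum_{i=N_1}^{N_2} v_i\bigl(\widehat{\phi_{a_{i+1}}}(\xi)-\widehat{\phi_{a_i}}(\xi)\bigr) = \sum_{i=N_1}^{N_2} v_i\bigl(\widehat{\phi}(a_{i+1}\xi)-\widehat{\phi}(a_i\xi)\bigr)$$
is bounded uniformly in $\xi$ and in $N$. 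Here I use $\widehat{\phi_t}(\xi)=\widehat{\phi}(t\xi)$. The key point is that $\boldsymbol{v}\in l^\infty(\mathbb{Z})$, so $|\widehat{K_{\boldsymbol{a,v};N}}(\xi)|\leq \|\boldsymbol{v}\|_{l^\infty(\mathbb{Z})}\sum_{i\in\mathbb{Z}}|\widehat{\phi}(a_{i+1}\xi)-\widehat{\phi}(a_i\xi)|$, and I must bound this last sum by a constant independent of $\xi$.

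To bound $\sum_{i\in\mathbb{Z}}|\widehat{\phi}(a_{i+1}\xi)-\widehat{\phi}(a_i\xi)|$, I would split the sum according to the size of $a_i|\xi|$ relative to $1$, using the $\delta$-lacunarity $a_{i+1}/a_i\geq\delta>1$. Since $\phi\in\mathcal{S}(\rn)$, so is $\widehat{\phi}$; in particular $\widehat{\phi}(0)=\int\phi=1$, $\widehat{\phi}$ is Lipschitz with $\|\nabla\widehat{\phi}\|_\infty<\infty$, and $\widehat{\phi}$ decays faster than any polynomial, say $|\widehat{\phi}(\eta)|\leq C(1+|\eta|)^{-2}$ and also $|\nabla\widehat{\phi}(\eta)|\leq C(1+|\eta|)^{-2}$. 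For the indices with $a_i|\xi|\leq 1$ (there the ``small'' scales), I use the mean value theorem: $|\widehat{\phi}(a_{i+1}\xi)-\widehat{\phi}(a_i\xi)|\leq \|\nabla\widehat\phi\|_\infty(a_{i+1}-a_i)|\xi|\leq C\, a_{i+1}|\xi|$, and because the $a_i$ with $a_i|\xi|\leq 1$ form (after reindexing) a sequence that is lacunary with ratio $\geq\delta$ bounded above by $\delta|\xi|^{-1}$ roughly, the sum $\sum a_{i+1}|\xi|$ is a convergent geometric-type series bounded by $C\delta/(\delta-1)$. For the indices with $a_i|\xi|>1$ (the ``large'' scales), I crudely bound $|\widehat{\phi}(a_{i+1}\xi)-\widehat{\phi}(a_i\xi)|\leq |\widehat{\phi}(a_{i+1}\xi)|+|\widehat{\phi}(a_i\xi)|\leq C(a_i|\xi|)^{-2}$ (using $a_{i+1}>a_i$), and again since these $a_i|\xi|$ grow at least geometrically with ratio $\delta$, the sum $\sum (a_i|\xi|)^{-2}$ is dominated by a convergent geometric series with bound depending only on $\delta$. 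Combining the two pieces gives $\sum_{i\in\mathbb{Z}}|\widehat{\phi}(a_{i+1}\xi)-\widehat{\phi}(a_i\xi)|\leq C(n,\phi,\delta)$ uniformly in $\xi$ and in $N$, and then $\|\widehat{K_{\boldsymbol{a,v};N}}\|_{L^\infty}\leq C(n,\phi,\delta)\|\boldsymbol{v}\|_{l^\infty(\mathbb{Z})}$.

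I expect the main obstacle to be purely bookkeeping: carefully handling the transition region where $a_i|\xi|\approx 1$ and making sure the two regimes are glued without double counting, and verifying that the ``reindexed'' lacunary sequences really do give geometric decay with the claimed ratio. One clean way to organize this is to fix $\xi\neq 0$, let $i_0=i_0(\xi)$ be the largest index with $a_{i_0}|\xi|\leq 1$ (which exists and is finite by lacunarity), bound the tail $i\leq i_0$ by $\sum_{j\geq 0} a_{i_0-j+1}|\xi|\cdot\|\nabla\widehat\phi\|_\infty\leq C\sum_{j\geq 0}\delta^{-j}$ using $a_{i_0-j}\leq\delta^{-j}a_{i_0}$ and $a_{i_0}|\xi|\leq\delta$, and bound the head $i>i_0$ by $C\sum_{j\geq 1}\delta^{-2j}$ using $a_{i_0+j}|\xi|\geq\delta^{j}$. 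Once the uniform multiplier bound is established, Plancherel immediately yields $\|\mathcal{S}_{\boldsymbol{a,v};N}(\{\Phi_t\}_{t>0},f)\|_{L^2(\rn)}=\|\widehat{K_{\boldsymbol{a,v};N}}\,\widehat f\,\|_{L^2(\rn)}\leq C\|f\|_{L^2(\rn)}$ with $C$ independent of $N$, completing the proof.
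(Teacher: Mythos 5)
Your overall strategy --- Plancherel plus a uniform $L^\infty$ bound on the multiplier $\widehat{K_{\boldsymbol{a,v};N}}$ --- is exactly the paper's, but the final step differs in a way worth noting. The paper bounds
$\sum_i |v_i|\,|\widehat{\phi}(a_{i+1}\xi)-\widehat{\phi}(a_i\xi)|
\le \|\boldsymbol{v}\|_{l^\infty(\mathbb{Z})}\sum_i\bigl|\int_{a_i}^{a_{i+1}}\partial_t[\widehat{\phi}(t\xi)]\,dt\bigr|
\le \|\boldsymbol{v}\|_{l^\infty(\mathbb{Z})}\int_0^\infty|\nabla\widehat{\phi}(t\xi)\cdot\xi|\,dt
\le C|\xi|\int_0^\infty(1+t|\xi|)^{-2}\,dt = C$,
using only that the intervals $(a_i,a_{i+1})$ are pairwise disjoint. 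This requires no lacunarity at all and yields a constant depending only on $n$, $\phi$ and $\|\boldsymbol{v}\|_{l^\infty(\mathbb{Z})}$, exactly as the proposition asserts. Your two-regime splitting instead invokes the $\delta$-lacunarity to sum geometric series, so your constant also depends on $\delta$ --- harmless for the later applications, but slightly weaker than the stated dependence, and more bookkeeping than is needed.

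There is also one genuine (though easily repaired) hole in your splitting: at the transition index $i=i_0$, the largest $i$ with $a_{i_0}|\xi|\le 1$, your mean value bound gives $\|\nabla\widehat{\phi}\|_{L^\infty}\,a_{i_0+1}|\xi|$, and $a_{i_0+1}|\xi|>1$ is not controlled, because Proposition~\ref{S L2} does \emph{not} assume the upper bound \eqref{add} on the ratios $a_{i+1}/a_i$; the same issue makes your claim $a_{i_0}|\xi|\le\delta$ insufficient for the $j=0$ term of the tail. That single term must instead be estimated trivially by $2\|\widehat{\phi}\|_{L^\infty}$. With that fix your proof is correct and uniform in $N$; the paper's integral bound is simply shorter and avoids the transition region altogether.
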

\begin{proof}
	For $f\in L^{2}(\rn)$, we let $\mathcal{F}(f)$ denote the Fourier transform of $f$. By the Plancherel theorem, we have
\begin{align*} 	
	\|\mathcal{S}_{\boldsymbol{a,v};N}\left(\{\Phi_{t}\}_{t>0}, f\right)\|_{L^2(\rn)}^{2}
	 &=\left\| \sum\limits_{i=N_{1}}^{N_{2}}v_{i}\left(\Phi_{a_{i+1}}f(x)-\Phi_{a_{i}}f(x) \right)\right\| _{L^2(\rn)}^{2}\\
	&=\int_{\rn} \left| \sum\limits_{i=N_{1}}^{N_{2}} v_{i} \left[ \mathcal{F}\left( \Phi_{a_{i+1}}f\right) (\xi)-\mathcal{F}\left( \Phi_{a_{i}}f\right) (\xi) \right]  \right| ^{2}d\xi\\
	&\leq C\int_{\rn}\left\lbrace \sum\limits_{i=N_{1}}^{N_{2}}\left| \int _{a_{i}}^{a_{i+1}}\frac{\partial}{\partial t}\left[ \mathcal{F}\left( \Phi_{t}f\right)
	(\xi)\right] dt\right| \right\rbrace ^2 d\xi\\
	&\leq C \int_{\rn}\left\lbrace \int_{0}^{\infty}\left|\frac{\partial}{\partial t}\left[ \mathcal{F}\left( \phi_{t}\right)(\xi)  \mathcal{F}\left( f\right)
	(\xi)\right] \right| dt\right\rbrace ^2 d\xi\\
	&= C \int_{\rn}\left\lbrace \int_{0}^{\infty}\left|\frac{\partial}{\partial t}\left[ \mathcal{F}\left( \phi\right)(t\xi)  \right] \right| dt\right\rbrace ^2 \left[ \mathcal{F}\left( f\right)(\xi)\right] ^{2}d\xi.
\end{align*}
Since $g:=\mathcal{F}\left( \phi\right)\in \mathcal{S}(\rn)$, then we have
\begin{align*}
\left\lbrace \int_{0}^{\infty}\left|\frac{\partial}{\partial t}\left[ \mathcal{F}\left( \phi\right)(t\xi)  \right] \right| dt\right\rbrace ^2&=\left[  \int_{0}^{\infty}\left| \nabla g (t\xi) \cdot \xi  \right| dt\right] ^2\\
&\leq C\left| \xi\right| ^2\left[  \int_{0}^{\infty}\frac{1}{\left( 1+t\left| \xi\right|\right) ^2}dt\right] ^2\leq C.
\end{align*}
Hence, combining with previous estimates, we can see
$$ \|\mathcal{S}_{\boldsymbol{a,v};N}\left(\{\Phi_{t}\}_{t>0}, f\right)\|_{L^2(\rn)}^{2}\leq C\int_{\rn}\left[ \mathcal{F}\left( f\right)(\xi)\right] ^{2}d\xi= C\|f\|_{L^2(\rn)}^2.$$
\end{proof}
Next, we will prove the uniform boundedness of the operators $\mathcal{S}_{\boldsymbol{a,v};N}\left(\{\Phi_{t}\}_{t>0}, \cdot\right)$. The standard Calder\'{o}n-Zygmund theory will be a fundamental tool, for which the reader can see \cite[Theorem 5.10]{JWE}. By this theory, Proposition \ref{S L2}  and Lemma \ref{ S kernel es }, we obtain the uniform boundedness of $\mathcal{S}_{\boldsymbol{a,v};N}\left(\{\Phi_{t}\}_{t>0}, \cdot\right)$ as follows.
 \begin{prop}\label{S partial Lp}
 	Let $\boldsymbol{a}$ be a $\delta$-lacunary sequence of positive numbers, $\boldsymbol{v}$ be a bounded sequence of complex numbers and $\phi\in \mathcal{S}(\rn)$ with $\int_{\rn}\phi(x)dx=1$. For any $1<p<\infty$, there exists a constant $C$ depending on $n, p, \phi$ and $\left\| \boldsymbol{v}\right\|_{l^{\infty}(\mathbb{Z})} $ (not on $N$) such that
 	$$\|\mathcal{S}_{\boldsymbol{a,v};N}\left(\{\Phi_{t}\}_{t>0}, f\right)\|_{L^p(\rn)}\leq C\|f\|_{L^p(\rn)}.$$
 \end{prop}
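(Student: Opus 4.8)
The plan is to invoke the classical Calder\'on--Zygmund theory, exactly as announced before the statement, keeping track of uniformity in $N$. Each partial sum $\mathcal{S}_{\boldsymbol{a,v};N}\left(\{\Phi_{t}\}_{t>0}, \cdot \right)$ is a convolution operator with kernel $K_{\boldsymbol{a,v};N}$ from \eqref{Kernel}, hence it is bounded on $L^2(\rn)$ with a bound independent of $N$ by Proposition \ref{S L2}. By Lemma \ref{ S kernel es } the kernels satisfy the size bound $\abs{K_{\boldsymbol{a,v};N}(y)}\leq C\abs{y}^{-n}$ and the gradient bound $\abs{\nabla K_{\boldsymbol{a,v};N}(y)}\leq C\abs{y}^{-n-1}$ with $C$ not depending on $N$; the gradient bound yields, via the mean value theorem, the H\"ormander regularity condition $\int_{\abs{y}>2\abs{z}}\abs{K_{\boldsymbol{a,v};N}(y-z)-K_{\boldsymbol{a,v};N}(y)}\,dy\leq C$ uniformly in $z\neq 0$ and in $N$. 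Thus each $\mathcal{S}_{\boldsymbol{a,v};N}$ is a Calder\'on--Zygmund operator all of whose structural constants ($L^2$ norm and kernel constants) are bounded independently of $N$.

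First I would treat the range $1<p\leq 2$. Applying the Calder\'on--Zygmund decomposition together with the H\"ormander condition above gives the weak type $(1,1)$ estimate $\norm{\mathcal{S}_{\boldsymbol{a,v};N}f}_{L^{1,\infty}(\rn)}\leq C\norm{f}_{L^1(\rn)}$ with $C$ independent of $N$; this is precisely the content of \cite[Theorem 5.10]{JWE}, whose hypotheses are met by the previous paragraph. Interpolating (Marcinkiewicz) between this weak $(1,1)$ bound and the $L^2$ bound of Proposition \ref{S L2} yields $\norm{\mathcal{S}_{\boldsymbol{a,v};N}f}_{L^p(\rn)}\leq C\norm{f}_{L^p(\rn)}$ for all $1<p\leq 2$, with $C=C(n,p,\phi,\norm{\boldsymbol{v}}_{l^\infty(\mathbb{Z})})$ independent of $N$.

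Next I would obtain the range $2\leq p<\infty$ by duality. Since $\mathcal{S}_{\boldsymbol{a,v};N}$ is convolution with $K_{\boldsymbol{a,v};N}$, its adjoint is convolution with $\widetilde{K}(y):=\overline{K_{\boldsymbol{a,v};N}(-y)}$, and from \eqref{Kernel} one sees that $\widetilde{K}$ satisfies the same size and smoothness estimates of Lemma \ref{ S kernel es } with the same constants. Hence, by the previous paragraph applied to the adjoint, it is bounded on $L^{p'}(\rn)$ for $1<p'\leq 2$, i.e.\ for $2\leq p<\infty$, with a bound independent of $N$; dualizing gives the $L^p$ bound for $\mathcal{S}_{\boldsymbol{a,v};N}$ itself in the same range. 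Combining the two ranges finishes the proof.

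I do not expect a genuine obstacle here: the one point needing attention is the uniformity of the constant in $N$, and that is already guaranteed because Proposition \ref{S L2} and Lemma \ref{ S kernel es } both supply $N$-independent constants, so the Calder\'on--Zygmund machinery outputs an $N$-independent constant. The only routine bookkeeping is the passage from the pointwise gradient bound of Lemma \ref{ S kernel es }(ii) to the integrated H\"ormander condition, namely $\abs{K_{\boldsymbol{a,v};N}(y-z)-K_{\boldsymbol{a,v};N}(y)}\leq \abs{z}\sup_{\zeta}\abs{\nabla K_{\boldsymbol{a,v};N}(\zeta)}\leq C\abs{z}\abs{y}^{-n-1}$ for $\abs{y}>2\abs{z}$ (supremum over the segment joining $y-z$ and $y$), followed by summation over dyadic annuli $2^k\abs{z}<\abs{y}\leq 2^{k+1}\abs{z}$, $k\geq 1$.
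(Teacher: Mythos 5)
Your proposal is correct and follows essentially the same route as the paper, which simply invokes the standard Calder\'on--Zygmund theory (citing \cite[Theorem 5.10]{JWE}) together with Proposition \ref{S L2} and Lemma \ref{ S kernel es }, noting that all constants are independent of $N$. You merely spell out the details (H\"ormander condition from the gradient bound, weak $(1,1)$ plus interpolation, then duality) that the paper leaves to the cited reference.
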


The following lemma, parallel to \cite[Proposition 3.1]{CMT}, shows that, without loss of generality, we may assume the $\delta$-lacunary sequence $\boldsymbol{a}:=\{a_{i}\}_{i\in \mathbb{Z}}$ satisfies \eqref{add}.
\begin{lem}\label{addtion}
Let $\phi\in \mathcal{S}(\rn)$ with $\int_{\rn}\phi(x)dx=1$. Given  a $\delta$-lacunary sequence of positive numbers $\boldsymbol{a}:=\{a_{i}\}_{i\in\mathbb{Z}}$ and a bounded  sequence of complex numbers $\boldsymbol{v}:=\{v_{i}\}_{i\in\mathbb{Z}}$, we can find a $\delta$-lacunary sequence of positive numbers $\boldsymbol{b}:=\{b_{i}\}_{i\in\mathbb{Z}}$ and a bounded sequence of complex numbers $\boldsymbol{w}:=\{w_{i}\}_{i\in\mathbb{Z}}$ verifying the following properties:
\begin{itemize}
	\item [{\rm(i)}] $1<\delta\leq \frac{b_{i+1}}{b_{i}}\leq \delta^{2}, i\in\mathbb{Z}, \left\| \boldsymbol{w}\right\|_{l^{\infty}(\mathbb{Z})}=\left\| \boldsymbol{v}\right\|_{l^{\infty}(\mathbb{Z})}$,
	\item [{\rm(ii)}] For any $N=(N_{1},N_{2})\in \mathbb{Z}_{<}^{2},$ there exists $N'=({N'_{1}},{N'_{2}})\in \mathbb{Z}_{<}^{2}$ such that\ $$\mathcal{S}_{\boldsymbol{a,v};N}\left(\{\Phi_{t}\}_{t>0}, \cdot\right)=\mathcal{S}_{\boldsymbol{b,w};N'}\left(\{\Phi_{t}\}_{t>0}, \cdot\right).$$
\end{itemize}
\end{lem}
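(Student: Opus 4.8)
The plan is to obtain $\boldsymbol{b}$ by \emph{refining} $\boldsymbol{a}$: between each consecutive pair $a_{i}<a_{i+1}$ I would insert finitely many geometrically spaced points so that every consecutive ratio of the enlarged sequence lies in $[\delta,\delta^{2}]$, and I would let $\boldsymbol{w}$ be constant, equal to $v_{i}$, on the block of new indices produced from the interval $[a_{i},a_{i+1}]$. Because attaching the common weight $v_{i}$ to each refined one-step difference makes the corresponding inner sum telescope back to $v_{i}(\Phi_{a_{i+1}}f-\Phi_{a_{i}}f)$, the value of the partial-sum operator is unchanged as soon as the new summation range $N'$ is chosen so that its endpoints correspond to $a_{N_{1}}$ and $a_{N_{2}+1}$.

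\emph{Refining one gap.} Fix $i\in\mathbb{Z}$ and set $t_{i}:=\log_{\delta}(a_{i+1}/a_{i})$, so $t_{i}\geq 1$ by $\delta$-lacunarity. Choose $k_{i}:=1$ when $t_{i}\leq 2$ and $k_{i}:=\lfloor t_{i}\rfloor$ when $t_{i}>2$; a one-line verification gives $t_{i}/2\leq k_{i}\leq t_{i}$ in either case, hence $t_{i}/k_{i}\in[1,2]$. Then $c_{i,j}:=a_{i}\,\delta^{\,jt_{i}/k_{i}}$ for $0\leq j\leq k_{i}$ satisfies $c_{i,0}=a_{i}$, $c_{i,k_{i}}=a_{i+1}$ and $c_{i,j+1}/c_{i,j}=\delta^{t_{i}/k_{i}}\in[\delta,\delta^{2}]$.

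\emph{Assembling $\boldsymbol{b}$ and $\boldsymbol{w}$.} Since $\boldsymbol{a}$ is $\delta$-lacunary, $a_{i}\to\infty$ as $i\to+\infty$ and $a_{i}\to 0$ as $i\to-\infty$, while each gap receives only $k_{i}-1<\infty$ new points; therefore the union of all the $c_{i,j}$ is a strictly increasing two-sided sequence of order type $\mathbb{Z}$, which I would label $\boldsymbol{b}=\{b_{m}\}_{m\in\mathbb{Z}}$ with the normalization $b_{0}=a_{0}$. There is then a strictly increasing map $i\mapsto m_{i}$ with $a_{i}=b_{m_{i}}$ and $\{b_{m_{i}},\dots,b_{m_{i+1}}\}=\{c_{i,0},\dots,c_{i,k_{i}}\}$, and I would set $w_{m}:=v_{i}$ for $m_{i}\leq m<m_{i+1}$. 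By construction $\delta\leq b_{m+1}/b_{m}\leq\delta^{2}$ for all $m$, and $\boldsymbol{w}$ takes exactly the values of $\boldsymbol{v}$, each at least once, so $\|\boldsymbol{w}\|_{l^{\infty}(\mathbb{Z})}=\|\boldsymbol{v}\|_{l^{\infty}(\mathbb{Z})}$; this proves (i).

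\emph{The identity.} Given $N=(N_{1},N_{2})\in\mathbb{Z}_{<}^{2}$ I would take $N':=(m_{N_{1}},\,m_{N_{2}+1}-1)$, which lies in $\mathbb{Z}_{<}^{2}$ since $\sum_{i=N_{1}}^{N_{2}}k_{i}\geq N_{2}-N_{1}+1\geq 2$. Grouping the sum defining $\mathcal{S}_{\boldsymbol{b,w};N'}$ into the blocks $m_{i}\leq m<m_{i+1}$ and telescoping each block yields
$$\mathcal{S}_{\boldsymbol{b,w};N'}\left(\{\Phi_{t}\}_{t>0},f\right)=\sum_{i=N_{1}}^{N_{2}}v_{i}\bigl(\Phi_{b_{m_{i+1}}}f-\Phi_{b_{m_{i}}}f\bigr)=\sum_{i=N_{1}}^{N_{2}}v_{i}\bigl(\Phi_{a_{i+1}}f-\Phi_{a_{i}}f\bigr)=\mathcal{S}_{\boldsymbol{a,v};N}\left(\{\Phi_{t}\}_{t>0},f\right),$$
which is (ii). The whole argument is elementary: the only points requiring a little care are the observation that an interval of $\log_{\delta}$-length $t\geq 1$ can be split into consecutive pieces of length in $[1,2]$ (handled in the refining step) and the check that the enlarged sequence still has order type $\mathbb{Z}$, so that the global re-indexing and the choice of $N'$ are legitimate. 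I do not anticipate a genuine obstacle.
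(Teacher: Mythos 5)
Your proposal is correct, and since the paper omits the proof of this lemma entirely (deferring to the ``parallel'' result \cite[Proposition 3.1]{CMT}), your refinement-plus-telescoping argument is exactly the standard one being invoked: subdivide each gap $[a_{i},a_{i+1}]$ geometrically so that consecutive ratios land in $[\delta,\delta^{2}]$, keep the weight constant on each block so the inner sums collapse back to $v_{i}(\Phi_{a_{i+1}}f-\Phi_{a_{i}}f)$, and re-index. The details you flag as needing care (that $t_{i}/k_{i}\in[1,2]$, that the refined set has order type $\mathbb{Z}$, and that $N'=(m_{N_{1}},m_{N_{2}+1}-1)$ lies in $\mathbb{Z}^{2}_{<}$) all check out.
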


Next,  we will establish the following Cotlar's type inequality to control $\mathcal{S}_{\boldsymbol{a,v};M}^{\ast}\left(\{\Phi_{t}\}_{t>0}, \cdot\right),$ where $M>2$ is a positive integer and
$$\mathcal{S}_{\boldsymbol{a,v};M}^{\ast}\left(\{\Phi_{t}\}_{t>0}, f\right)(x):=\sup\limits_{-M+1<N_{1}<N_{2}<M-1}\left|\mathcal{S}_{\boldsymbol{a,v};(N_{1},N_{2})}\left(\{\Phi_{t}\}_{t>0}, f \right)(x) \right| .$$
\begin{thm}\label{C ineq}
	Let $\boldsymbol{a}$ be a $\delta$-lacunary sequence of positive numbers satisfying $\eqref{add}$, $\boldsymbol{v}$ be a bounded sequence of complex numbers and $\phi\in \mathcal{S}(\rn)$ with $\int_{\rn}\phi(x)dx=1$. For each $q\in(1,\infty)$, there exists a constant $C$ depending on $n,\phi, \delta, q$ and $\left\| \boldsymbol{v}\right\|_{l^{\infty}(\mathbb{Z})} $ such that, for every
	$x\in\rn$ and every positive integer $ M>2$,
	 $$\mathcal{S}_{\boldsymbol{a,v};M}^{\ast}\left(\{\Phi_{t}\}_{t>0}, f\right)(x)\leq C\left\lbrace\mathcal{M}\left( \mathcal{S}_{\boldsymbol{a,v};(-M,M)}\left(\{\Phi_{t}\}_{t>0}, f\right)\right)(x)+\mathcal{M}_{q}f(x) \right\rbrace, $$
where $\mathcal{M}$ is the Hardy-Littlewood maximal operator and
$$\mathcal{M}_{q}f(x):=\sup\limits_{r>0}\left(\frac1{\left| B(x,r)\right|}\int_{B(x,r)}\left|f(y) \right|^q dy  \right)^{\frac1{q}},\quad 1<q<\infty.$$
\end{thm}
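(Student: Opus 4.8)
The plan is to fix $x\in\rn$, a positive integer $M>2$, and a pair $-M+1<N_1<N_2<M-1$, and to estimate $\bigl|\mathcal{S}_{\boldsymbol{a,v};(N_1,N_2)}(\{\Phi_t\}_{t>0},f)(x)\bigr|$ by comparing it with an average of $\mathcal{S}_{\boldsymbol{a,v};(-M,M)}(\{\Phi_t\}_{t>0},f)$ over a suitably chosen ball. The natural scale is $r:=a_{N_1}$ (the smallest parameter occurring in the partial sum of index $N=(N_1,N_2)$), and the ball is $B:=B(x,r)$. First I would split $f=f\chi_{2B}+f\chi_{(2B)^c}=:f_0+f_\infty$. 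For the local part, I would use Proposition \ref{S partial Lp} (uniform $L^q$ boundedness of $\mathcal{S}_{\boldsymbol{a,v};N'}$, here applied with $N'=(-M,M)$ or with $N=(N_1,N_2)$ — the bound is independent of $N$) together with H\"older's inequality to control $|\mathcal{S}_{\boldsymbol{a,v};(N_1,N_2)}(\{\Phi_t\}_{t>0},f_0)(x)|$ in an averaged sense by $\mathcal{M}_q f(x)$; this is where the exponent $q>1$ and the Calder\'on--Zygmund machinery enter.

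Next I would handle the "tail" part $f_\infty$ via the kernel estimates in Lemma \ref{ S kernel es }. The key point is that
$$
\mathcal{S}_{\boldsymbol{a,v};(N_1,N_2)}(\{\Phi_t\}_{t>0},f_\infty)(x)-\mathcal{S}_{\boldsymbol{a,v};(N_1,N_2)}(\{\Phi_t\}_{t>0},f_\infty)(z)
=\int_{(2B)^c}\bigl[K_{\boldsymbol{a,v};(N_1,N_2)}(x-y)-K_{\boldsymbol{a,v};(N_1,N_2)}(z-y)\bigr]f(y)\,dy
$$
for $z\in B$, and by the gradient bound $|\nabla K_{\boldsymbol{a,v};N}(y)|\le C|y|^{-n-1}$ (uniform in $N$) this difference is bounded by $C\,\mathcal{M}f(x)\le C\,\mathcal{M}_q f(x)$, after summing the standard geometric series over the annuli $2^{k+1}B\setminus 2^kB$. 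Crucially I must also replace $f_\infty$ by $f$ inside $\mathcal{S}_{\boldsymbol{a,v};(-M,M)}$ at the center-comparison step: the discrepancy $\mathcal{S}_{\boldsymbol{a,v};(-M,M)}(\{\Phi_t\}_{t>0},f_0)(z)$ is again controlled in $L^q$-average over $B$ by $\mathcal{M}_q f(x)$ using Proposition \ref{S partial Lp}.

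Now comes the combinatorial heart of the argument, and I expect this to be the main obstacle: one must relate the partial sum of index $(N_1,N_2)$ to the partial sum of the fixed "full" index $(-M,M)$. Writing $K_{\boldsymbol{a,v};(N_1,N_2)}=K_{\boldsymbol{a,v};(-M,M)}-\sum_{i=-M}^{N_1-1}v_i(\phi_{a_{i+1}}-\phi_{a_i})-\sum_{i=N_2+1}^{M-1}v_i(\phi_{a_{i+1}}-\phi_{a_i})$, the two "leftover" sums telescope to essentially $\pm\,$(a difference of two $\phi_{a_j}$'s, times bounded coefficients), i.e. they behave like a single term $v_j(\phi_{a_{j'}}-\phi_{a_j})$ with $a_j\sim r$ on the relevant scale; here condition \eqref{add}, $a_{i+1}/a_i\le\delta^2$, is what keeps these boundary scales comparable to $r=a_{N_1}$ and to the scale $a_{N_2}$. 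Then $\Phi_{a_j}f(x)$ for $a_j$ comparable to $r$ is pointwise dominated by $C\,\mathcal{M}f(x)$ since $\phi\in\mathcal{S}(\rn)$ (standard majorization of the approximate identity by the maximal function), and $\Phi_{a_{N_2}}f(x)$ likewise — so after accounting for the tail/local splitting above these boundary contributions are absorbed into $\mathcal{M}_q f(x)$. Assembling: for a.e.\ $z\in B$,
$$
\bigl|\mathcal{S}_{\boldsymbol{a,v};(N_1,N_2)}(\{\Phi_t\}_{t>0},f)(x)\bigr|
\le \bigl|\mathcal{S}_{\boldsymbol{a,v};(-M,M)}(\{\Phi_t\}_{t>0},f)(z)\bigr|+C\mathcal{M}_q f(x),
$$
and averaging in $z$ over $B$ gives $\le C\mathcal{M}(\mathcal{S}_{\boldsymbol{a,v};(-M,M)}(\{\Phi_t\}_{t>0},f))(x)+C\mathcal{M}_q f(x)$. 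Taking the supremum over all admissible pairs $(N_1,N_2)$ yields the claimed inequality, with $C$ depending only on $n,\phi,\delta,q$ and $\|\boldsymbol{v}\|_{l^\infty(\mathbb{Z})}$ and not on $M$. The delicate bookkeeping — making the leftover telescoping sums genuinely look like a single approximate-identity term at scale $r$ and tracking that all constants are $M$-independent — is the step that requires the most care.
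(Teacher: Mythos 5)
Your overall architecture (split $f$ into a local piece and a tail at the scale of the smallest parameter in the partial sum, control the tail by the gradient bound of Lemma \ref{ S kernel es } and compare with an average of the full partial sum over a small ball, control the local piece via the uniform $L^q$ bound) matches the paper's. But the step you yourself identify as the ``combinatorial heart'' contains a genuine error. You write $K_{\boldsymbol{a,v};(N_1,N_2)}=K_{\boldsymbol{a,v};(-M,M)}-\sum_{i=-M}^{N_1-1}v_i(\phi_{a_{i+1}}-\phi_{a_i})-\sum_{i=N_2+1}^{M-1}v_i(\phi_{a_{i+1}}-\phi_{a_i})$ and claim the two leftover sums ``telescope to essentially a difference of two $\phi_{a_j}$'s times bounded coefficients.'' They do not: for a general bounded sequence $\boldsymbol{v}$ the weighted sum $\sum_i v_i(\phi_{a_{i+1}}-\phi_{a_i})$ has no telescoping structure --- this non-cancellation is precisely what makes the differential transform nontrivial (if it telescoped, $\mathcal{S}_{\boldsymbol{a,v};*}$ would be trivially dominated by the maximal function of the approximate identity and there would be nothing to prove). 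Bounding the leftover term by term gives $O(M)\,\mathcal{M}f(x)$, which is useless, and the large-scale leftover $\sum_{i=N_2+1}^{M-1}v_i(\Phi_{a_{i+1}}f-\Phi_{a_i}f)$ is itself a partial sum of the very type you are trying to estimate, so your decomposition is circular at this point.

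The paper's way around this is twofold. First, it reduces at the outset via the identity $\mathcal{S}_{\boldsymbol{a,v};(N_1,N_2)}=\mathcal{S}_{\boldsymbol{a,v};(N_1,M)}-\mathcal{S}_{\boldsymbol{a,v};(N_2+1,M)}$, so that one only ever needs to estimate sums $\mathcal{S}_{\boldsymbol{a,v};(m,M)}$ anchored at the right endpoint $M$; then the only leftover when comparing with $\mathcal{S}_{\boldsymbol{a,v};(-M,M)}$ is the purely small-scale piece $\mathcal{S}_{\boldsymbol{a,v};(-M,m-1)}$, and it is applied only to $f_2=f\chi_{(B(x_0,a_m))^c}$, supported far from the relevant points. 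Second, that piece is controlled not by telescoping but by the kernel estimate of Lemma \ref{for C ineq}(ii): for $|z-y|\ge a_k$ with $k\ge m$ one has $\bigl|\sum_{i=-M}^{m-1}v_i[\phi_{a_{i+1}}(z-y)-\phi_{a_i}(z-y)]\bigr|\le C\delta^{m-k+1}a_k^{-n}$, where the geometric factor $\delta^{m-k}$ (coming from $\sum_{i\le m-1}(a_{i+1}-a_i)\lesssim a_m$ and $a_m/a_k\le\delta^{m-k}$) is exactly what makes the sum over annuli converge uniformly in $M$. You would need to incorporate this reduction and this estimate; without them the argument does not close. A smaller point: for the local piece at the base point $x$ itself you cannot invoke the $L^q$ bound ``in an averaged sense'' --- the paper uses the pointwise kernel bound of Lemma \ref{for C ineq}(i) there, and reserves the $L^q$/H\"older argument for the term where an average over the comparison ball genuinely appears.
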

For the proof of this theorem we shall need the folllowing lemma:
\begin{lem}\label{for C ineq}
 Let $\phi\in \mathcal{S}(\rn)$ with $\int_{\rn}\phi(x)dx=1$, $\boldsymbol{a}:=\{a_{i}\}_{i\in \mathbb{Z}}$ be a $\delta$-lacunary sequence satisfying $\eqref{add}$ and $\boldsymbol{v}:=\{v_{i}\}_{i\in \mathbb{Z}}$ be a bounded sequence of complex numbers. For
 every positive integer $ M>2$, and integer $m$ satisfying $\left| m\right|< M$, there exists a positive $C$ depending on $n,\phi, \delta$ and $\left\| \boldsymbol{v}\right\|_{l^{\infty}(\mathbb{Z})} $ such that
 \begin{itemize}
 	\item [{\rm(i)}] for any $x, y\in \rn$,
 	\begin{equation*}
 		\left|\sum\limits_{i=m}^{M} v_{i}\left[ \phi_{a_{i+1}}(x-y)-\phi_{a_{i}}(x-y)\right] \right| \leq \frac{C}{a_{m}^n};
 	\end{equation*}
 	\item [{\rm(ii)}] if $k\geq m$ and $z,y \in \rn$ with $\left| z-y\right|\geq a_{k}$, then
 	\begin{equation*}
 	\left|\sum\limits_{i=-M}^{m-1} v_{i}\left[ \phi_{a_{i+1}}(z-y)-\phi_{a_{i}}(z-y)\right] \right| \leq C\frac{\delta^{m-k+1}}{a_{k}^n}	.
 	\end{equation*}
 \end{itemize}
\end{lem}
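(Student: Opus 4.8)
The plan is to reduce both inequalities to the pointwise derivative bound \eqref{one partial} via the Newton--Leibniz formula, exactly in the spirit of the proof of Lemma \ref{ S kernel es }; no genuinely new idea is needed. Throughout one uses only that, since $\delta>1$, the sequence $\boldsymbol{a}$ is strictly increasing with $a_k\ge\delta^{k-m}a_m$ whenever $k\ge m$ (the upper bound in \eqref{add} will not be needed), and that $|v_i|\le\|\boldsymbol{v}\|_{l^{\infty}(\mathbb{Z})}$.

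For (i), I would write $u:=x-y$, express each difference as $\phi_{a_{i+1}}(u)-\phi_{a_i}(u)=\int_{a_i}^{a_{i+1}}\frac{\partial}{\partial t}\phi_t(u)\,dt$, and apply the triangle inequality; since $\boldsymbol{a}$ is increasing, the sum over $i=m,\dots,M$ collapses to
\[
\Bigl|\sum_{i=m}^{M}v_i\bigl[\phi_{a_{i+1}}(u)-\phi_{a_i}(u)\bigr]\Bigr|\le\|\boldsymbol{v}\|_{l^{\infty}(\mathbb{Z})}\int_{a_m}^{\infty}\Bigl|\frac{\partial}{\partial t}\phi_t(u)\Bigr|\,dt .
\]
Then \eqref{one partial} gives $|\partial_t\phi_t(u)|\le C(t+|u|)^{-n-1}\le Ct^{-n-1}$, so the last integral is at most $\int_{a_m}^{\infty}Ct^{-n-1}\,dt=Ca_m^{-n}$, which is the claim; note the estimate is plainly independent of $M$.

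For (ii), I would set $w:=z-y$ so that $|w|\ge a_k$, and run the same Newton--Leibniz/triangle-inequality step, extending the lower endpoint of integration from $a_{-M}$ down to $0$; this bounds the sum over $i=-M,\dots,m-1$ by $\|\boldsymbol{v}\|_{l^{\infty}(\mathbb{Z})}\int_0^{a_m}|\partial_t\phi_t(w)|\,dt$. The key observation is that on the integration range one has $t\le a_m\le a_k\le|w|$, so \eqref{one partial} yields $|\partial_t\phi_t(w)|\le C(t+|w|)^{-n-1}\le C|w|^{-n-1}\le Ca_k^{-n-1}$; integrating over $(0,a_m)$ produces $Ca_m a_k^{-n-1}$, and finally $a_m/a_k\le\delta^{m-k}\le\delta^{m-k+1}$ (from $a_k\ge\delta^{k-m}a_m$ and $\delta>1$) upgrades this to $C\delta^{m-k+1}a_k^{-n}$, as desired.

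I expect no serious obstacle: both parts are mechanical once \eqref{one partial} is in hand. The only point deserving a moment's care is in (ii), where the target carries the geometric decay factor $\delta^{m-k+1}$ in the index $k$; this is recovered by observing that the crude size estimate $|w|^{-n-1}\le a_k^{-n-1}$ already contains one power of $a_k^{-1}$ beyond the $a_k^{-n}$ appearing in the conclusion, which the lacunarity of $\boldsymbol{a}$ immediately converts into the required factor. As in Lemma \ref{ S kernel es }, all constants depend only on $n$, $\phi$, $\delta$ and $\|\boldsymbol{v}\|_{l^{\infty}(\mathbb{Z})}$, and in particular not on $M$ or $m$.
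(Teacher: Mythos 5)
Your proof is correct and rests on the same core reduction as the paper's: both parts come down to the pointwise bound \eqref{one partial} on $\partial_t\phi_t$. The only difference is technical. The paper estimates each summand by the mean value theorem, writing $|\phi_{a_{i+1}}(u)-\phi_{a_i}(u)|\le (a_{i+1}-a_i)\,|\partial_t\phi_t(u)|_{t=\xi_i}|$ and then using the upper bound in \eqref{add} to get $a_{i+1}-a_i\le(\delta^2-1)a_i$ before summing a geometric series in $i$; you instead keep the exact Newton--Leibniz representation and merge the disjoint intervals $[a_i,a_{i+1}]$ into a single integral over $[a_m,\infty)$ (resp.\ $(0,a_m]$), which is then bounded by $Ca_m^{-n}$ (resp.\ $Ca_m a_k^{-n-1}$) directly. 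Your variant is marginally cleaner and, as you note, does not use the upper bound $a_{i+1}/a_i\le\delta^2$ at all, whereas the paper's argument does invoke it in both parts; the final conversion $a_m/a_k\le\delta^{m-k}\le\delta^{m-k+1}$ in (ii) is handled the same way in both proofs. No gap.
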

\begin{proof}
	For (i), by the mean value theorem, \eqref{add} and \eqref{one partial}, there exists $\xi_{i}\in(a_{i},a_{i+1})$ such that
	\begin{align*} 	
	&\left|\sum\limits_{i=m}^{M} v_{i}\left[ \phi_{a_{i+1}}(x-y)-\phi_{a_{i}}(x-y)\right] \right|\\
	&\quad \leq \left\| \boldsymbol{v}\right\|_{l^{\infty}(\mathbb{Z})}	 \sum\limits_{i=m}^{M}\left( a_{i+1}-a_{i}\right) \left| \frac{\partial}{\partial t} \left( \phi_{t}(x-y)\right)|_{t=\xi_{i}} \right| \\
	&\quad\leq C\sum\limits_{i=m}^{M}\left( a_{i+1}-a_{i}\right)\left| \frac{ 1}{(\xi_{i}+\left|x-y \right|)^{n+1}}\right|\\
	&\quad\leq C \sum\limits_{i=m}^{M}\left( \delta^{2}-1\right)\frac{a_{i}}{a_{i}^{n+1}}\leq C \sum\limits_{i=m}^{M}\frac{\delta^{2}-1}{a_{m}^{n}\delta^{\left(i-m \right)n }}\leq \frac{C}{a_{m}^n}.
	\end{align*}

	Now we shall prove (ii). By the mean value theorem and \eqref{add}, there exists $\xi_{i}\in(a_{i},a_{i+1})$ such that
	\begin{align*}
		&\left|\sum\limits_{i=-M}^{m-1} v_{i}\left[ \phi_{a_{i+1}}(z-y)-\phi_{a_{i}}(z-y)\right] \right| \\
		&\quad \leq \left\| \boldsymbol{v}\right\|_{l^{\infty}(\mathbb{Z})}	 \sum\limits_{i=-M}^{m-1}\left( a_{i+1}-a_{i}\right) \left| \frac{\partial}{\partial t} \left( \phi_{t}(z-y)\right)|_{t=\xi_{i}} \right| \\
		&\quad\leq C\sum\limits_{i=-M}^{m-1}\left( \delta^{2}-1\right)\left| \frac{a_{i}}{(\xi_{i}+\left|z-y \right|)^{n+1}}\right|\\
		&\quad\leq C\left( \sum\limits_{i=-M}^{m-1}\frac{a_{i}}{a_{k}}\right) \frac1{{a_{k}}^n}\leq C \left( \sum\limits_{i=-M}^{m-1}\frac{1}{\delta^{k-i}}\right)\frac1{{a_{k}}^n}\leq C\frac{\delta^{m-k+1}}{a_{k}^n},
	\end{align*}
where we have used that $k\geq m$ in the last inequality.
\end{proof}
Now, we are in a position to prove  Theorem \ref{C ineq}.
\begin{proof}[\bf Proof of Theorem \ref{C ineq}]
Notice that, for any $x_{0}\in \rn$ and $N:=(N_{1},N_{2})$ with  $-M+1<N_{1}<N_{2}<M-1$,
$$\mathcal{S}_{\boldsymbol{a,v};N}\left(\{\Phi_{t}\}_{t>0}, f\right)(x_{0})=\mathcal{S}_{\boldsymbol{a,v};(N_{1},M)}\left(\{\Phi_{t}\}_{t>0}, f\right)(x_{0})-\mathcal{S}_{\boldsymbol{a,v};(N_{2}+1,M)}\left(\{\Phi_{t}\}_{t>0}, f\right)(x_{0}).$$
Then it suffices to estimate $\left| \mathcal{S}_{\boldsymbol{a,v};(m,M)}\left(\{\Phi_{t}\}_{t>0}, f\right)(x_{0})\right| $ for $\left| m\right|< M$ with constants independent of $m$ and $M$. Denote $B_{k}:=B(x_{0},a_{k})$ for each $k\in \mathbb{N}$. We split $f$ as
$$f=f\chi_{B_{m}}+ f\chi_{\left( B_{m}\right) ^c}=:f_{1}+f_{2}.$$
Then we have
\begin{align*} \left| \mathcal{S}_{\boldsymbol{a,v};(m,M)}\left(\{\Phi_{t}\}_{t>0}, f\right)(x_{0})\right|&\leq \left| \mathcal{S}_{\boldsymbol{a,v};(m,M)}\left(\{\Phi_{t}\}_{t>0}, f_{1}\right)(x_{0})\right|+\left| \mathcal{S}_{\boldsymbol{a,v};(m,M)}\left(\{\Phi_{t}\}_{t>0}, f_{2}\right)(x_{0})\right|\\
	&=:A_{1}+A_{2}.
\end{align*}

For $A_{1}$, by Lemma \ref{for C ineq} (i) and H\"older's inequality,
\begin{align*}
A_{1} &=\left|\int_{\rn}\sum\limits_{i=m}^{M} v_{i}\left[ \phi_{a_{i+1}}(x-y)-\phi_{a_{i}} (x-y)\right]f_{1}(y)dy \right|\\
&\leq \frac{C}{a_{m}^n}\int_{\rn}\left| f_{1}(y)\right| dy\leq C\mathcal{M}(f)(x_{0})\leq C\mathcal{M}_{q}f(x_{0}).
\end{align*}

For $A_{2}$,
\begin{align*}
A_{2}&=\frac{C}{\left(a_{m-1}/2\right)^{n} }\int_{B(x_{0},a_{m-1}/2)}\left| \mathcal{S}_{\boldsymbol{a,v};(m,M)}\left(\{\Phi_{t}\}_{t>0}, f_{2}\right)(x_{0})\right|dz \\
&\leq \frac{C}{\left(a_{m-1}/2\right)^{n} }\int_{B(x_{0},a_{m-1}/2)}\left|\mathcal{S}_{\boldsymbol{a,v};(m,M)}\left(\{\Phi_{t}\}_{t>0}, f_{2}\right)(z)- \mathcal{S}_{\boldsymbol{a,v};(m,M)}\left(\{\Phi_{t}\}_{t>0}, f_{2}\right)(x_{0})\right|dz\\
&\quad \quad +\frac{C}{\left(a_{m-1}/2\right)^{n} }\int_{B(x_{0},a_{m-1}/2)}\left| \mathcal{S}_{\boldsymbol{a,v};(m,M)}\left(\{\Phi_{t}\}_{t>0}, f_{2}\right)(z)\right|dz\\
&\quad =:CA_{21}+CA_{22}.
\end{align*}

For $A_{21}$, notice that
\begin{align*}
&\left|\mathcal{S}_{\boldsymbol{a,v};(m,M)}\left(\{\Phi_{t}\}_{t>0}, f_{2}\right)(z)- \mathcal{S}_{\boldsymbol{a,v};(m,M)}\left(\{\Phi_{t}\}_{t>0}, f_{2}\right)(x_{0})\right|\\
&\quad \leq \int_{(B_{m})^{c}}\left| K_{\boldsymbol{a,v};(m,M)}(z-y)-K_{\boldsymbol{a,v};(m,M)}(x_{0}-y)\right| \left|f(y) \right|dy \\
&\quad=\sum\limits_{i=1}^{\infty}	 \int_{2^{i}B_{m}\setminus2^{i-1}B_{m}}\left| K_{\boldsymbol{a,v};(m,M)}(z-y)-K_{\boldsymbol{a,v};(m,M)}(x_{0}-y)\right| \left|f(y) \right|dy.
\end{align*}
For $z\in B(x_{0},a_{m-1}/2), y\in 2^{i}B_{m}\setminus2^{i-1}B_{m}, i\in \mathbb{N}$, by Lemma \ref{ S kernel es } (ii) and the mean value theorem, we know that there exists $\xi_{i}$ on the segment $\overline{x_{0}z}$ such that
\begin{align*}
&\left| K_{\boldsymbol{a,v};(m,M)}(z-y)-K_{\boldsymbol{a,v};(m,M)}(x_{0}-y)\right|\\
&\quad  =\left|\nabla K_{\boldsymbol{a,v};(m,M)}(\xi_{i}-y)\right|\left| x_{0}-z\right|\leq C\frac{\left| x_{0}-z\right|}{\left| \xi_{i}-y\right|^{n+1}}\leq C\frac{a_{m-1}}{\left( 2^{i}a_{m}\right) ^{n+1}}.
\end{align*}
Hence, for any $z\in B(x_{0},a_{m-1}/2)$,
\begin{align*}
	 &\left|\mathcal{S}_{\boldsymbol{a,v};(m,M)}\left(\{\Phi_{t}\}_{t>0}, f_{2}\right)(z)- \mathcal{S}_{\boldsymbol{a,v};(m,M)}\left(\{\Phi_{t}\}_{t>0}, f_{2}\right)(x_{0})\right|\\
	&\quad \leq C \sum\limits_{i=1}^{\infty}	 \int_{2^{i}B_{m}\setminus2^{i-1}B_{m}}\frac{a_{m-1}}{\left( 2^{i}a_{m}\right) ^{n+1}} \left|f(y) \right|dy\\
	&\quad \leq C \left( \sum\limits_{i=1}^{\infty}\frac1{2^i}\right) \mathcal{M}f(x_{0})\leq C\mathcal{M}_{q}f(x_{0}).
\end{align*}
It yields
$$A_{21}\leq C\mathcal{M}_{q}f(x_{0}).$$

For $A_{22}$,
\begin{align*}
A_{22}&=\frac{1}{\left(a_{m-1}/2\right)^{n} }\int_{B(x_{0},a_{m-1}/2)}\left| \mathcal{S}_{\boldsymbol{a,v};(-M,M)}\left(\{\Phi_{t}\}_{t>0}, f_{2}\right)(z)\right.\\
&\left.\quad \quad \quad\quad \quad \quad\quad \quad \quad\quad \quad \quad-\mathcal{S}_{\boldsymbol{a,v};(-M,m-1)}\left(\{\Phi_{t}\}_{t>0}, f_{2}\right)(z)\right|dz \\
&\leq \frac{1}{\left(a_{m-1}/2\right)^{n} }\int_{B(x_{0},a_{m-1}/2)}\left| \mathcal{S}_{\boldsymbol{a,v};(-M,M)}\left(\{\Phi_{t}\}_{t>0}, f\right)(z)\right| dz\\
&\quad +\frac{1}{\left(a_{m-1}/2\right)^{n} }\int_{B(x_{0},a_{m-1}/2)}\left| \mathcal{S}_{\boldsymbol{a,v};(-M,M)}\left(\{\Phi_{t}\}_{t>0}, f_{1}\right)\right| (z)dz\\
&\quad +\frac{1}{\left(a_{m-1}/2\right)^{n} }\int_{B(x_{0},a_{m-1}/2)}\left| \mathcal{S}_{\boldsymbol{a,v};(-M,m-1)}\left(\{\Phi_{t}\}_{t>0}, f_{2}\right)(z)\right| dz\\
&=: A_{221}+A_{222}+A_{223}.
\end{align*}
It is obvious that
$$A_{221}\leq \mathcal{M}\left(\mathcal{S}_{\boldsymbol{a,v};(-M,M)}\left(\{\Phi_{t}\}_{t>0}, f\right) \right) (x_{0}).$$

For $A_{222}$, by the H\"older inequality  and the boundedness of $\mathcal{S}_{\boldsymbol{a,v};(-M,M)}\left(\{\Phi_{t}\}_{t>0},\cdot\right)$,

\begin{align*}
A_{222}&\leq C\left[ \frac{1}{\left(a_{m-1}/2\right)^{n} }\int_{B(x_{0},a_{m-1}/2)}\left| \mathcal{S}_{\boldsymbol{a,v};(-M,M)}\left(\{\Phi_{t}\}_{t>0}, f_{1}\right) (z)\right|^{q}dz\right] ^{\frac1{q}}\\
&\leq C\left( \frac{1}{\left(a_{m-1}/2\right)^{n} }\int_{\rn}\left|  f_{1}(z)\right|^{q}dz\right)  ^{\frac1{q}}\\
&\leq C\left( \frac{1}{\left(a_{m}/2\right)^{n} }\int_{B_{m}}\left|  f(z)\right|^{q}dz\right)  ^{\frac1{q}}\leq C\mathcal{M}_{q}f(x_{0}).
\end{align*}

For $A_{223}$, notice that
$$A_{223}\leq C\frac{1}{\left(a_{m-1}/2\right)^{n} }\int_{B(x_{0},a_{m-1}/2)}\int_{\left( B_{m}\right)^{c} }\left| K_{\boldsymbol{a,v};(-M,m-1)}(z-y)\right|\left| f(y)\right|dy   dz.$$
And for any $z\in B(x_{0}, a_{m-1}/2)$, when $y\in B_{k+1}\setminus B_{k}$ with $k\geq m$, $\left|z-y \right|\sim \left| y-x_{0}\right| \geq a_{k} $. Then by Lemma \ref{for C ineq} (ii),
\begin{align*}
&\int_{\left( B_{m}\right)^{c} }\left| K_{\boldsymbol{a,v};(-M,m-1)}(z-y)\right|\left| f(y)\right|dy\\
&\quad = \sum\limits_{k=m}^{\infty} \int_{B_{k+1}\setminus B_{k}}\left|\sum\limits_{i=-M}^{m-1} v_{i}\left[ \phi_{a_{i+1}}(z-y)-\phi_{a_{i}}(z-y)\right] f(y)\right|dy\\
&\quad \leq C \sum\limits_{k=m}^{\infty} \int_{B_{k+1}\setminus B_{k}}\frac{\delta^{m-k-1}}{a_{k} ^n }\left|f(y)\right|dy\\
&\quad \leq C\sum\limits_{k=m}^{\infty} \int_{B_{k+1}}\frac{\delta^{m-k-1}}{a_{k+1}^n }\left|f(y)\right|dy\\
&\quad \leq C\sum\limits_{k=m}^{\infty}\delta^{m-k-1}\mathcal{M}f(x_{0})\leq C\mathcal{M}_{q}f(x_{0}).
\end{align*}
Hence,
$$A_{223}\leq C\mathcal{M}_{q}f(x_{0}).$$
Combining  the estimates above for $A_{221}, A_{222}$ and $A_{223}$, we get
$$A_{22}\leq C\mathcal{M}_{q}f(x_{0})+C\mathcal{M}\left(\mathcal{S}_{\boldsymbol{a,v};(-M,M)}\left(\{\Phi_{t}\}_{t>0}, f\right) \right) (x_{0}).$$
And the estimates for $A_{21}$ and $A_{22}$ imply
$$A_{2}\leq C\mathcal{M}_{q}f(x_{0})+C\mathcal{M}\left(\mathcal{S}_{\boldsymbol{a,v};(-M,M)}\left(\{\Phi_{t}\}_{t>0}, f\right) \right) (x_{0}).$$
Summing the estimates of $A_{1}$ and $A_{2}$, we conclude that
$$\mathcal{S}_{\boldsymbol{a,v};(m,M)}\left(\{\Phi_{t}\}_{t>0}, f\right)(x_0)\leq C\left\lbrace\mathcal{M}\left( \mathcal{S}_{\boldsymbol{a,v};(-M,M)}\left(\{\Phi_{t}\}_{t>0}, f\right)\right)(x_0)+\mathcal{M}_{q}f(x_0) \right\rbrace, $$
where the constant $C$ appearing above  depends on $n, p, \phi$ and $\left\| \boldsymbol{v}\right\|_{l^{\infty}(\mathbb{Z})}$ (not on $M$).
 Consequently, we have
$$ \mathcal{S}_{\boldsymbol{a,v};M}^{\ast}\left(\{\Phi_{t}\}_{t>0}, f\right)(x_0)\leq C\left\lbrace\mathcal{M}\left( \mathcal{S}_{\boldsymbol{a,v};(-M,M)}\left(\{\Phi_{t}\}_{t>0}, f\right)\right)(x_0)+\mathcal{M}_{q}f(x_0) \right\rbrace. $$
This completes the proof of Theorem \ref{C ineq}.
\end{proof}

Now we can give the proof of Theorem \ref{S Lp}.
\begin{proof}[\bf Proof of Theorem \ref{S Lp}]
For any $p\in(1,\infty)$, we can choose $1<q<p$. Then it is well known  that the maximal
operators $\mathcal{M}$ and $\mathcal{M}_{q}$ are both bounded on $L^{p}(\rn)$. Furthermore, by Theorem \ref{C ineq} and Proposition \ref{S partial Lp}, we have
\begin{align*}
	 &\|\mathcal{S}_{\boldsymbol{a,v};M}^{\ast}\left(\{\Phi_{t}\}_{t>0}, f\right)\|_{L^{p}(\rn)}\leq C\left( \| \mathcal{M}\left( \mathcal{S}_{\boldsymbol{a,v};(-M,M)}\left(\{\Phi_{t}\}_{t>0}, f\right)\right) \|_{L^{p}(\rn)}+\|\mathcal{M}_{q}f\|_{L^{p}(\rn)}\right)\\
	&\quad \leq C\left(\|  \mathcal{S}_{\boldsymbol{a,v};(-M,M)}\left(\{\Phi_{t}\}_{t>0}, f\right) \|_{L^{p}(\rn)}+\|f\|_{L^{p}(\rn)} \right)\leq C \|f\|_{L^{p}(\rn)}.
\end{align*}
Since the constant $C$ appearing above does not depend on $M$, letting $M$ increase to infinity, by Fatou's lemma, we have
$$\left\| \mathcal{S}_{\boldsymbol{a,v};*}\left(\{\Phi_{t}\}_{t>0}, f \right)\right\| _{L^p(\rn)}\leq C\|f\|
_{L^p(\rn)}.$$
This completes the proof of Theorem \ref{S Lp}.
\end{proof}

\subsection{ {\rm{BMO-BLO}} boundedness of the maximal differential transform}\label{subsec 4.2}
\begin{proof}[\bf Proof of Theorem \ref{main 3}]
Let $f\in\bmoz$. We first assume $\mathcal{S}_{\boldsymbol{a,v};*}\left(\{\Phi_{t}\}_{t>0}, f \right)(x)$ is finite almost everywhere. And in this case, we shall show that there exists a constant $C$  depending on $n, \left\| \boldsymbol{v}\right\|_{l^{\infty}(\mathbb{Z})}$, $\delta$ and $\phi$ such that for any ball $B:=B(x_{0},r)\subset \rn$.
\begin{align}
	F:=\frac{1}{\left| B\right| }\int_{B}&\left[ \mathcal{S}_{\boldsymbol{a,v};*}\left(\{\Phi_{t}\}_{t>0}, f\right)(x)\right.\notag\\
	&\left.-\mathop{\einf}\limits_{y\in B}\mathcal{S}_{\boldsymbol{a,v};*}\left(\{\Phi_{t}\}_{t>0}, f\right)(y)\right] dx\leq C  \|f\|_{\bmoz}.\label{main result S}
\end{align}

Since $\{a_{i}\}_{i\in \mathbb{Z}}$ is a $\delta$-lacunary sequence of positive number, there exists an integer $i_{0}$ such that $a_{i_{0}}<r\leq a_{i_{0}+1}$. We decompose $B$ into the following  three subsets:
\begin{align*}
B_{1}&:=\left\lbrace x\in B:\mathcal{S}_{\boldsymbol{a,v};*}\left(\{\Phi_{t}\}_{t>0}, f\right)(x)=\sup_{\substack{N_{2}\leq i_{0}-1\\(N_{1},N_{2})\in{\mathbb{Z}_{<}^{2}}} } \left| \sum\limits_{i=N_{1}}^{N_{2}}v_{i}\left(\Phi_{a_{i+1}}f(x)-\Phi_{a_{i}}f(x) \right) \right|\right\rbrace,\\
B_{2}&:=\left\lbrace x\in B:\mathcal{S}_{\boldsymbol{a,v};*}\left(\{\Phi_{t}\}_{t>0}, f\right)(x)=\sup_{\substack{N_{1}\geq i_{0}+1\\(N_{1},N_{2})\in{\mathbb{Z}_{<}^{2}} }} \left| \sum\limits_{i=N_{1}}^{N_{2}}v_{i}\left(\Phi_{a_{i+1}}f(x)-\Phi_{a_{i}}f(x) \right) \right|\right\rbrace,
\end{align*}
{and}\begin{equation*}
B_{3}:=\left\lbrace x\in B:\mathcal{S}_{\boldsymbol{a,v};*}\left(\{\Phi_{t}\}_{t>0}, f\right)(x)=\sup_{\substack{N_{1}\leq i_{0}\leq N_{2}\\(N_{1},N_{2})\in{\mathbb{Z}_{<}^{2}} }} \left| \sum\limits_{i=N_{1}}^{N_{2}}v_{i}\left(\Phi_{a_{i+1}}f(x)-\Phi_{a_{i}}f(x) \right) \right|\right\rbrace.
\end{equation*}
For each $j=1, 2, 3$, we define $F_{j}$ as $F$ with $B$ replaced by $B_{j}$ respectively. Then for $F_{1}$, we have
\begin{align*}
F_{1}\leq\frac1{\left|B\right| }\int_{B_{1}}\sup_{\substack{N_{2}\leq i_{0}-1\\ (N_{1},N_{2})\in{\mathbb{Z}_{<}^{2}} }} \left| \sum\limits_{i=N_{1}}^{N_{2}}v_{i}\left(\Phi_{a_{i+1}}f(x)-\Phi_{a_{i}}f(x) \right) \right|dx.
\end{align*}
Since $a_{i_{0}-1}<a_{i_{0}}<r$, by proceeding as in the estimations of $G_{1}, G_{2}, G_{3}$ in the proof of Theorem \ref{main O}, we can see
$$F_{1}\leq C \|f\|_{\bmoz}. $$

For $F_{2}$, we have
\begin{align*}
F_{2}&\leq \frac1{\left|B \right| }\int_{B_{2}}\left\lbrace \sup_{\substack{N_{1}\geq i_{0}+1\\(N_{1},N_{2})\in{\mathbb{Z}_{<}^{2}} }} \left| \sum\limits_{i=N_{1}}^{N_{2}}v_{i}\left(\Phi_{a_{i+1}}f(x)-\Phi_{a_{i}}f(x) \right) \right|\right.\\
&\left.\quad-\mathop{\einf}\limits_{y\in B}\sup_{\substack{N_{1}\geq i_{0}+1\\(N_{1},N_{2})\in{\mathbb{Z}_{<}^{2}} }} \left| \sum\limits_{i=N_{1}}^{N_{2}}v_{i}\left(\Phi_{a_{i+1}}f(y)-\Phi_{a_{i}}f(y) \right) \right| \right\rbrace dx\\
&\leq \mathop{\esup}\limits_{x,y\in B}\sup_{\substack{N_{1}\geq i_{0}+1\\(N_{1},N_{2})\in{\mathbb{Z}_{<}^{2}} }}\left| \sum\limits_{i=N_{1}}^{N_{2}}v_{i}\left[ \left(\Phi_{a_{i+1}}f(x)-\Phi_{a_{i}}f(x) \right)-\left(\Phi_{a_{i+1}}f(y)-\Phi_{a_{i}}f(y) \right)\right] \right|.
\end{align*}
Since $a_{i_{0}+1}\geq r$, by proceeding as in the estimations of $E_{2}$ in the proof of Theorem \ref{main O}, we can see
$$F_{2}\leq C \|f\|_{\bmoz}. $$

For $F_{3}$, we can see,  for any $x\in B_{3}$,
\begin{align*}
&\sup_{\substack{N_{1}\leq i_{0}\leq N_{2}\\(N_{1},N_{2})\in{\mathbb{Z}_{<}^{2}}} } \left| \sum\limits_{i=N_{1}}^{N_{2}}v_{i}\left(\Phi_{a_{i+1}}f(x)-\Phi_{a_{i}}f(x) \right) \right|\\
&\quad \leq \sup_{N_{1}<i_{0}} \left| \sum\limits_{i=N_{1}}^{i_{0}}v_{i}\left(\Phi_{a_{i+1}}f(x)-\Phi_{a_{i}}f(x) \right) \right|+\sup_{N_{1}\leq i_{0}<i_{0}+1\leq N_{2} } \left| \sum\limits_{i=N_{1}}^{N_{2}}v_{i}\left(\Phi_{a_{i+1}}f(x)-\Phi_{a_{i}}f(x) \right) \right|\\
&\quad \leq \sup_{N_{1}<i_{0}} \left| \sum\limits_{i=N_{1}}^{i_{0}+1}v_{i}\left(\Phi_{a_{i+1}}f(x)-\Phi_{a_{i}}f(x) \right) \right|+\left|v_{i_{0}+1}\left( \Phi_{a_{i_{0}+2}}f(x)-\Phi_{a_{i_{0}+1}}f(x) \right) \right| \\
&\quad \quad +\sup_{N_{1}\leq i_{0}}\left| \sum\limits_{i=N_{1}}^{i_{0}+1}v_{i}\left(\Phi_{a_{i+1}}f(x)-\Phi_{a_{i}}f(x) \right) \right|+\sup_{N_{2}\geq i_{0}+1}\left| \sum\limits_{i=i_{0}+1}^{N_{2}}v_{i}\left(\Phi_{a_{i+1}}f(x)-\Phi_{a_{i}}f(x) \right) \right|\\
&\quad \leq 3\sup_{N_{1}\leq i_{0}+1}\left| \sum\limits_{i=N_{1}}^{i_{0}+1}v_{i}\left(\Phi_{a_{i+1}}f(x)-\Phi_{a_{i}}f(x) \right) \right|+\sup_{N_{2}\geq i_{0}+1}\left| \sum\limits_{i=i_{0}+1}^{N_{2}}v_{i}\left(\Phi_{a_{i+1}}f(x)-\Phi_{a_{i}}f(x) \right) \right|.
\end{align*}
It follows that,
\begin{align*}
F_{3}&\leq \frac{3}{\left|B \right| }\int_{B}\sup_{N_{1}\leq i_{0}+1}\left| \sum\limits_{i=N_{1}}^{i_{0}+1}v_{i}\left(\Phi_{a_{i+1}}f(x)-\Phi_{a_{i}}f(x) \right) \right|dx\\	
&\quad +\mathop{\esup}\limits_{x,y\in B}	\sup_{N_{2}\geq i_{0}+1 }\left| \sum\limits_{i=i_{0}+1}^{N_{2}}v_{i}\left[ \left(\Phi_{a_{i+1}}f(x)-\Phi_{a_{i}}f(x) \right)-\left(\Phi_{a_{i+1}}f(y)-\Phi_{a_{i}}f(y) \right)\right] \right|\\
&=:3F_{31}+F_{32}.
\end{align*}

From $\eqref{add}$ and $a_{i_{0}}<r$, we deduce $a_{i_{0}+2}\leq \delta^{4}a_{i_{0}} \leq Cr$. Then the arguments of $F_{31}$ and $F_{32}$ are similar to $F_{1}$ and $F_{2}$ respectively. And then we get
$$F_{3}\leq  C \|f\|_{\bmoz}. $$

 Combining the estimates above for $F_{1}, F_{2}$ and $F_{3}$, we prove \eqref{main result S}.

Now we only need to show that, if there exists a point $x_{0}\in\rn$ such that
\begin{equation}\label{Sae}
	\mathcal{S}_{\boldsymbol{a,v};*}\left(\{\Phi_{t}\}_{t>0}, f\right)(x_0)<\infty,
\end{equation}
then $\mathcal{S}_{\boldsymbol{a,v};*}\left(\{\Phi_{t}\}_{t>0}, f\right)$ is finite almost everywhere. Similar to the proof of \eqref{main result S}, we can prove that \begin{equation}\label{S a.e }
	\frac{1}{\left| B\right| }\int_{B}\left[ \mathcal{S}_{\boldsymbol{a,v};*}\left(\{\Phi_{t}\}_{t>0}, f\right)(x)-\inf\limits_{y\in B}\mathcal{S}_{\boldsymbol{a,v};*}\left(\{\Phi_{t}\}_{t>0}, f\right)(y)\right] dx\leq C  \|f\|_{\bmoz}.
\end{equation}
 By the assumption \eqref{Sae}, we can see that $\inf_{y\in B} \mathcal{S}_{\boldsymbol{a,v};*}\left(\{\Phi_{t}\}_{t>0}, f\right)(y) <\infty.$ Then \eqref{S a.e } implies  $$\mathcal{S}_{\boldsymbol{a,v};*}\left(\{\Phi_{t}\}_{t>0}, f\right)(x)<\infty,a.e.\ x\in B(x_0, r).$$
By the arbitrariness of $r$, we get
$$\mathcal{S}_{\boldsymbol{a,v};*}\left(\{\Phi_{t}\}_{t>0}, f\right)(x)<\infty,a.e.\ x\in \rn.$$
This completes the proof of Theorem \ref{main 3}.
\end{proof}

{\bf Acknowledgments:}{ The authors would like to thank the referees for careful reading
	and helpful suggestions, which help to make this paper more readable. This work is supported by the National Natural Science Foundation of China (Grant Nos. 11971402, 11971431, 12171399) and the Natural Science Foundation of Zhejiang Province (Grant No. LY22A010011). }

\vspace{0.3cm}

College of Mathematics and Information Science,\ Henan Normal University,\  Xinxiang 453007,\  P.R. China

\smallskip

{\it E-mail}: \texttt{huwenting@htu.edu.cn}

\vspace{0.3cm}

School of Mathematical Sciences,\  Xiamen University,\  Xiamen 361005,\ P.R. China

\smallskip

{\it E-mail}: \texttt{19020211153414@stu.xmu.edu.cn }

\vspace{0.3cm}

School of Mathematical Sciences,\ Xiamen University,\ Xiamen 361005,\  P.R. China

\smallskip

{\it E-mail}: \texttt{dyyang@xmu.edu.cn }

\vspace{0.3cm}

School of Statistics and Mathematics,\ Zhejiang Gongshang  University,\ Hangzhou 310018, \quad P.R. China

\smallskip

{\it E-mail}: \texttt{zaoyangzhangchao@163.com }


\begin{thebibliography}{99}

\bibitem{AJS} M. Akcoglu, R. L. Jones and P. Schwartz, Variation in probability, ergodic theory and analysis, Illinois J. Math., 42:154-177, 1998.

\vspace{-0.3cm}

\bibitem{AB} V. Almeida and J. J. Betancor, Variation and oscillation for harmonic operators in the inverse Gaussian setting, Commun. Pure Appl. Anal., 21(2): 419-470, 2022.

\vspace{-0.3cm}


\bibitem{BLMTTT} A. L. Bernardis, M. Lorente, F. J. Mart\'{\i}nez, A. de la Torre and J.L. Torrea, Differential transforms in weighted spaces. J. Fourier Anal. Appl., 12(1): 83-103, 2006.

\vspace{-0.3cm}

\bibitem{BFHR} J. J. Betancor, J. C. Fari\~{n}a, E. Harboure and L. Rodr\'{\i}guez-Mesa, Variation operators for semigroups and Riesz transforms on BMO in the Schr\"{o}dinger setting, Potential Anal., 38(3): 711-739, 2013.

\vspace{-0.3cm}

\bibitem{B} J. Bourgain, Pointwise ergodic theorems for arithmetic sets, Publ. Math. Inst. Hautes \'Etudes Sci., 69(1): 5-45, 1989.

\vspace{-0.3cm}


\bibitem{CJRW} J. T. Campbell, R. L. Jones, K. Reinhold and M. Wierdl, Oscillation and variation for the Hilbert transform, Duke Math. J., 105(1): 59-83, 2000.

\vspace{-0.3cm}

\bibitem{CJRWH} J. T. Campbell, R. L. Jones, K. Reinhold and M. Wierdl, Oscillation and variation for singular integrals in higher dimensions, Trans. Am. Math. Soc., 355(5):    2115-2137, 2003.


\vspace{-0.3cm}

\bibitem{CMT} Z. Chao, T. Ma and J. L. Torrea, Boundedness of differential transforms for one-sided fractional Poisson-type operator sequence, J. Geom. Anal., 31(1): 67-99, 2021.


\vspace{-0.3cm}

\bibitem{CT} Z. Chao and J. L. Torrea, Boundedness of differential transforms for heat semigroups generated by Schr\"{o}dinger operators, Canad. J. Math., 73(3): 622-655, 2021.



\vspace{-0.3cm}

\bibitem{CMMTV} R. Crescimbeni, R. A. Mac\'{\i}as, T. Men\'{a}rguez, J. L. Torrea and  B. Viviani, The $\rho$-variation as an operator between maximal operators and singular integrals, J. Evol. Equ., 9(1): 81-102, 2009.

\vspace{-0.3cm}

\bibitem{CMTT} R. Crescimbeni, F. Mart\'{\i}n-Reyes, A. de la Torre and J. L. Torrea, The $\rho$-variation of the Hermitian Riesz transform, Acta Math. Sin. Engl. Ser., 26(10):  1827-1838, 2010.

\vspace{-0.3cm}
\bibitem{Demir1}
 S. Demir, Variational inequalities for the differences of averages over lacunary sequences,  New York J. Math., 28: 1099-1111, 2022.

\vspace{-0.3cm}
 \bibitem{Demir2}
 S. Demir, Inequalities for the variation operator. Bull. Hellenic Math. Soc., 64: 92-97, 2020.


\vspace{-0.3cm}

\bibitem{JWE} J. Duoandikoetxea, Fourier analysis, Translated and revised from the 1995 Spanish original by David  Cruz-Uribe. Graduate Studies in Mathematics, Volume 29, American Mathematical Society, Providence, RI, 2001.

\vspace{-0.3cm}

\bibitem{GTM} L. Grafakos, Modern Fourier Analysis, 3rd Edition, Springer, New York, 2014.

\vspace{-0.3cm}

\bibitem{HBLWY} W. Hu, J. J. Betancor, S. Liu, H. Wu and D. Yang, Boundedness of operators on weighted Morrey-Campanato spaces in the Bessel setting,
J. Geom. Anal., accepted.

\vspace{-0.3cm}

\bibitem{JKRW} R. L. Jones, R. Kaufman, J. M. Rosenblatt and M. Wierdl, Oscillation in ergodic theory, Ergodic Theory Dynam. Systems., 18(4): 889-935, 1998.

\vspace{-0.3cm}

\bibitem{JOR} R. L. Jones, I. V. Ostrovskii and J. M. Rosenblatt, Square functions in ergodic theory, Ergodic Theory Dynam. Systems., 16(2): 267-305, 1996.

\vspace{-0.3cm}

\bibitem{JR}  R. L. Jones and K. Reinhold, Oscillation and variation inequalities for convolution powers, Ergod. Theory Dyn. Syst., 21(6):  1809-1829, 2001.

\vspace{-0.3cm}

\bibitem{JW} R. L. Jones and K. Reinhold, Variation inequalities for the Fejer and Poisson kernel, Trans. Am. Math. Soc., 356(11): 4493-4518, 2004.

\vspace{-0.3cm}

\bibitem{JRD} R. L. Jones and J. Rosenblatt, Differential and ergodic transforms, Math. Ann., 323(3): 525-546, 2002.

\vspace{-0.3cm}

\bibitem{JSW} R. L. Jones, A. Seeger and J. Wright, Strong variational and jump inequalities in harmonic analysis, Trans. Am. Math. Soc., 360(12): 6711-6742, 2008.

\vspace{-0.3cm}

\bibitem{L} D. L\'epingle, La variation d'order $p$ des semi-martingales, Z. Wahrscheinlichkeitstheorie und Verw. Gebiete, 36(4): 295-316, 1976.

\vspace{-0.3cm}

\bibitem{LiY}H. Lin and D. Yang, Spaces of type BLO on non-homogeneous metric measure. Front. Math. China, 6(2): 271-292, 2011.

\vspace{-0.3cm}

\bibitem{LHH} H. Liu, Variational characterization of $H^{p}$, Proc. Roy. Soc. Edinburgh Sect. A., 149: 1123-1134, 2019.

\vspace{-0.3cm}

\bibitem{LY} L. Liu and D. Yang, BLO spaces associated with the Ornstein-Uhlenbeck operator, Bull. Sci. Math., 132: 633-649, 2008.

\vspace{-0.3cm}

\bibitem{JQ} J. Qian,  The $p-$variation of partial sum processes and the empirical process, Ann. Probab.26: 1370-1383, 1998.

\vspace{-0.3cm}

\bibitem{YY} D. Yang and D. Yang, BMO-estimates for maximal operators via approximations of the identity with non-doubling measures, Canad. J. Math.,62(6): 1419-1434, 2010.

\vspace{-0.3cm}

\bibitem{YYZ} D. Yang, D. Yang and Y. Zhou, Localized Morrey-Campanato spaces on metric measure spaces and applications to Schr\"{o}dinger operators, Nagoya Math. J., 198: 77-119, 2010.

\vspace{-0.3cm}

\bibitem{Z} Y. Zhou, Some endpoint estimates of local Littlewood-Paley operators, Journal of Beijing Normal University (Natural Science) 44(6): 577-580, 2008.
\vspace{-0.3cm}

\end{thebibliography}
\end{document}